\newtheorem{theorem}{Theorem}[section]
\newtheorem{example}{Example}[section]
\newtheorem{remark}{Remark}[section]
\newtheorem{as}{Assumption --}[section]
\newtheorem{lemma}{Lemma}[section]
\newtheorem{corollary}{Corollary}[section]
\newcommand\imCMsym[4][\mathord]{%
	\DeclareFontFamily{U} {#2}{}
	\DeclareFontShape{U}{#2}{m}{n}{
		<-6> #25
		<6-7> #26
		<7-8> #27
		<8-9> #28
		<9-10> #29
		<10-12> #210
		<12-> #212}{}
	\DeclareSymbolFont{CM#2} {U} {#2}{m}{n}
	\DeclareMathSymbol{#4}{#1}{CM#2}{#3}
}
\def\ps@pprintTitle{%
	\let\@oddhead\@empty
	\let\@evenhead\@empty
	\let\@oddfoot\@empty
	\let\@evenfoot\@empty}
\begin{document}
	\fancyhead[L]{\it{D. Vashistha and C. Kumar}}
	\fancyhead[R]{\it{First- and Half-order schemes for SDEwMS}}
	\title{First- and Half-order Schemes for Regime Switching Stochastic Differential Equation with Non-differentiable Drift Coefficient}
	\author{Divyanshu Vashistha}
	\ead{dvashistha@ma.iitr.ac.in}
	\author{Chaman Kumar\corref{cor1}}
	\ead{chaman.kumar@ma.iitr.ac.in}
	\cortext[cor1]{Corresponding author}
	\address{Indian Institute of Technology Roorkee, Haridwar, 247667, India}
	
	\begin{abstract}
	An explicit first-order drift-randomized Milstein scheme for a regime switching stochastic differential equation is proposed and its bi-stability and rate of strong convergence are investigated for a non-differentiable drift coefficient.   
	Precisely, drift is Lipschitz continuous while diffusion along with its derivative is Lipschitz continuous.
	Further, we explore the significance of evaluating Brownian trajectories at  every switching time of the underlying Markov chain in achieving the convergence rate $1.0$ of the proposed scheme. 
	In this context, possible variants of the scheme, namely modified randomized and reduced randomized schemes, are considered and their convergence rates are shown to be $1/2$. 
	Numerical experiments are performed to illustrate the convergence rates of these schemes along with their corresponding non-randomized versions.
	Further, it is illustrated that the half-order non-randomized reduced and modified schemes outperforms the classical Euler scheme.
	\end{abstract}
	
	\begin{keyword}
	Randomized Milstein scheme \sep SDEs with Markovian switching \sep Bi-stability \sep First- and Half-order schemes \sep Rate of Convergence.\\
	\MSC[2020]  60H35 \sep 65L20 \sep 60H10 \sep 65C30 \sep  60J60.
	\end{keyword}
	
	\maketitle 
	
\section{{\textbf{Introduction}}}
Let  $(\tilde \Omega,  \tilde{\mathscr{F}}, \tilde{\mathbb{P}})$ be a complete probability space.  
Consider a $\tilde d -$dimensional  standard Brownian motion $B:=\{B(t)\}_{t\geq 0}$ with natural filtration $\tilde{\mathbb{F}}^{B}:=\{\tilde{\mathscr F}_t^{B}\}_{t\geq 0}$.  
Further,   assume that $r:=\{r(t)\}_{t\geq 0}$ is a Markov chain with finite state space $S:=\{1,\ldots, m'\}$ and generator $\mathcal{Q}:=(\mathfrak{q}_{j_0 k_0})_{j_0,k_0 \in S}$ where $\mathfrak{q}_{j_0 k_0}\geq 0$ for $j_0 \neq k_0$ and $\mathfrak{q}_{j_0 j_0}=-\displaystyle \sum_{k_0 \neq j_0} \mathfrak{q}_{j_0 k_0}$ which implies that the transition probability matrix of $r$ is given by,  
\begin{align*}
	\tilde{\mathbb{P}}(r(t+\Delta)=k_0 | r(t)=j_0)=\begin{cases}
		\mathfrak{q}_{j_0 k_0} \Delta +\mbox{o}(\Delta), & k_0\neq j_0, 
		\\
		1+\mathfrak{q}_{j_0 j_0}\Delta +\mbox{o}(\Delta),  & k_0=j_0,
	\end{cases}
\end{align*}
for any $t \geq 0$, $j_0, k_0 \in S$ and $\Delta>0$. 
The natural filtration of $r$ is denoted by $\tilde{\mathbb{F}}^r := \{\tilde{\mathscr F}_t^r\}_{t\geq 0}$. 
Also, consider an $\tilde{\mathscr F}_0^B$-measurable random variable $X_0$ and assume that   $r$, $B$ and $X_0$ are  independent.  
Define $\tilde {\mathbb{F}}:=\{\tilde{\mathscr F}_t\}_{t \geq 0}$ where $\tilde{\mathscr F}_t:=\tilde{\mathscr F}_t^{B} \vee \tilde{\mathscr F}_t^r $ for $t \geq 0$. 
For a fixed  $T>0$, 
let  $b:[0, T]\times \mathbb{R}^d \times S \mapsto \mathbb{R}^d$ and $\sigma: [0, T]\times \mathbb{R}^d \times S \mapsto \mathbb{R}^{d \times \tilde{d}}$ be Borel measurable functions. 

Consider the following $d-$dimensional  regime switching stochastic differential equation, also referred to as stochastic differential equation with Markovian switching (SDEwMS),  
\begin{align}
	X(t) = X_0+ \int_0^t b(s, X(s), r(s)) ds +\sum_{\ell=1}^{\tilde{d}} \int_0^t \sigma_{\ell}(s, X(s), r(s)) dB_{\ell}(s) \label{eq:sdems}
\end{align}
almost surely for any $t \in [0,T]$ with initial values $X(0)=X_0$ and $r(0) = i_0 \in S$. 
Define~$X:=\{X(t)\}_{t\geq 0}$. 

In comparison to the standard SDE, SDEwMS contains a Markov chain $r$ in its coefficients where each state of $r$ signifies a regime and in each regime, one solves an SDE.
For example, the hybrid geometric Brownian motion governed by the following equation, 
$$
d\tilde S(t)= \mu(r(t)) \tilde S(t)dt + \sigma(r(t)) \tilde S(t) dB(t),  
$$
with initial values $\tilde S(0)=\tilde{S}_0$ and $ r(0)= i_0 \in S$, where $\mu$ and $\sigma$ denote the return and the volatility of the stock price which vary with the state of the Markov chain $r$, see \cite[Chapter 10]{mao2006} and \cite[Chapter 11]{yin2010}.
In other words,  the varying return (interest rate) in different regimes are captured by different states of $r$ by introducing the Markov chain in the coefficients of the SDE. 
 Let us consider another example namely, the mean-reverting process with Markovian switching, used for modeling stochastic volatility or asset price or interest rate, is given by
\begin{align*}
d\tilde S(t) = \lambda(r(t))[\mu(r(t)) - \tilde S(t)]dt + \sigma(r(t))\tilde{S}(t)dB(t)
\end{align*}
with initial values $\tilde{S}(0) = \tilde{S}_0 >0$ and $r(0)=i_{0} \in S$ where $\lambda(i),\mu(i)$ and $\sigma(i)$ are positive constants for $i \in S$, see \cite[Chapter 10]{mao2006}. 
The work presented in this manuscript covers both the examples as given above.
Thus, SDEwMS is considered to be a more appropriate method of modelling such complex systems. For a detailed exposition of SDEswMS and their applications,  we refer the interested reader to \cite{mao2006,yin2010} and the references therein. It is worth mentioning that SDEwMS reduce to a standard SDE when the state space $S$ of the chain $r$ is a singleton set. An exposition on SDEs and their numerical approximation can be found in \cite{kloeden1992} and \cite{milstein2021stochastic}. 
Further, tamed schemes for SDE with super-linear coefficients are developed in \cite{martin2012,hutzenthaler2015numerical}, see also \cite{bao2021first,beyn2016stochastic,beyn2017stochastic,dos2022simulation,guo2017partially,kelly2018adaptive,kelly2022adaptive,kelly2023strong,kumar2021explicit,kumar2022well,kumar2017tamed,kumar2019milstein,lord2024convergence,mao2015truncated,mao2016convergence,neelima2020,sabanis2013note,sabanis2016,tretyakov2013fundamental} and the references therein.

Similar to the case of SDEs,  explicit solutions of SDEwMS are often either unavailable or tedious to find. 
Thus,  the development of efficient numerical schemes becomes crucial for obtaining their approximate solutions. 
The Euler scheme with a half rate of strong convergence has been studied in \cite{dareiotis2016tamed, kumar2017explicit,mao2007approximations, nguyen2012, nguyen2018,  nguyen2019,yuan2004convergence} and the references therein. 
For the first and higher-order schemes, the classical technique of applying  It\^o's formula on the coefficients, used in the context of standard SDEs,  does not work here as there is no notion of derivative of a Markov chain dependent function.
Moreover,  the mixing of discontinuity of the Markov chain $r$ with the continuous dynamics of the solution process $X$ pose additional difficulties.  
These challenges have been resolved in \cite[Lemma 2.2]{nguyen2017milstein} by developing an  It\^o's formula for SDEwMS with the help of a martingale associated with the chain $r$. 
This martingale captures the number of switchings of $r$ in a given interval. 
Furthermore, a Milstein-type scheme for SDEwMS is developed with the help of this new It\^o's formula and its rate of strong convergence is shown to be $1.0$ under the Lipschitz continuous coefficients. 
In comparison with the Milstein scheme for standard SDEs, the first-order scheme of SDEwMS consists of an additional term which involves the Brownian increments $\big\{ B_{\ell}(t_{j})-B_{\ell}(\uptau_{1}^{j-1})\big\}_{\ell\in \{1, \ldots, \tilde{d}\}}$ where $0=t_0 < t_1 < \cdots < t_{n_h}=T$ is a partition of the interval $[0,T]$ and $\tau_1^{j-1}$ is the first switching time of the chain $r$ in the time interval $(t_{j-1},t_j]$ for $j \in {1, \ldots, n_h}$, see also Equation \eqref{eq:scheme} below. 
The Milstein-type schemes for SDEwMS are further investigated in \cite{kumar2020explicit, kumar2021note} without assuming second-order derivatives on the drift and diffusion coefficients. 
Further, a stochastic It\^o-Taylor expansion for SDEwMS and its application in developing and analysing general order scheme is investigated in \cite{kumar2022}. 
In \cite{vashistha2024}, authors extend the results of \cite{nguyen2017milstein} to SDEwMS driven by L\'evy noise by proving an It\^o's formula which is then used to derive and study the first order Milstein-type scheme under the assumptions that coefficients and their derivatives satisfy Lipschitz condition.
Moreover, \cite{kelly2024adaptive} explored adaptive numerical methods for SDEwMS. Unlike adaptive strategies that requires careful mesh refinement including the switching times of the Markov chain, the randomized Milstein scheme offers a simpler implementation while capturing the effects of regime-switching. Thus, the first-order schemes for SDEwMS available in the literature, assume coefficients to have at least first-order derivatives whereas randomized approach allows for non-differentiable drift coefficient.

In this manuscript, we develop an explicit first-order numerical scheme for SDEwMS, assuming that the drift coefficient satisfies the Lipschitz continuity but is not differentiable. 
To achieve this, we employ the drift randomization technique developed in \cite{Kruse2019} for standard SDEs and propose a randomized Milstein scheme for SDEwMS (see Equations \eqref{eq:euler} and \eqref{eq:scheme} below). 
Indeed, the drift coefficient is fully randomized with respect to all the variables, including the Markov chain $r$. 
The diffusion coefficient and its derivatives are assumed to be Lipschitz continuous.  
The rate of strong convergence of the proposed scheme is shown to be $1.0$ under the above set-up. 
Additionally, the mean-square error of the randomized Milstein scheme is numerically shown to be less than that of the Milstein scheme for SDEwMS of \cite[Equation (4.20)]{nguyen2017milstein} but at a higher computational cost (see Figure \ref{fig:com:error},\ref{fig:com:cpu} and Table \ref{tab:com}).
Thus, the randomized Milstein scheme can be used for drift coefficients that are non-differentiable yet Lipschitz continuous, while the classical Milstein scheme is preferable for  suffieciently more regular drift coefficients, as developed in \cite{kumar2021note,nguyen2017milstein}.

Further, the above observations coincide with that of \cite{Kruse2019} when the state space of the Markov chain $r$ is a singleton set.
Moreover, if there is no randomization  (\textit{i.e.}, when $u_j  \equiv 0$ for all $j \in \mathbb{N}$ in Equation \eqref{eq:euler}), the proposed scheme coincides with the Milstein scheme for SDEwMS developed in \cite[Equation (4.20)]{nguyen2017milstein}. There are other works on randomization such as \cite{ sani2024,jentzen2009random, morkisz2021randomized, przybylowicz2024randomized, przybylowicz2024approximation} and the references therein, but they do not consider the regime switching SDEs. 
To the best of our knowledge, this is the first result on the first-order scheme for SDEwMS with Lipschitz continuous and non-differentiable drift coefficient.  
\newline 
\textit{Our Achievements:} 
Below,  we list the achievements of this manuscript.  
\begin{itemize}
	\item 
	We propose a first-order randomized Milstein scheme for SDEwMS and establish its bi-stability (see Theorem \ref{thm:bi}) and rate of strong convergence (see Theorem~\ref{thm:rate}) under a much weaker assumption on the drift coefficient as compared to the existing results (see Assumption \ref{as:drift}).
	Precisely,  the drift coefficient is Lipschitz continuous and non-differentiable. The diffusion coefficient along with its derivatives is Lipschitz continuous.   
	\item
	It is worth mentioning that the Brownian trajectory on every switching time of the Markov chain $r$ plays a pivotal role in recovering the rate $1.0$ of  mean-square convergence of the randomized Milstein scheme. 
	To demonstrate this,  we explore the possible variations of the proposed scheme by altering the key term associated with the Brownian increments $\big\{ B_{\ell}(t_{j})-B_{\ell}(\uptau_{1}^{j-1})\big\}_{\ell\in \{1, \ldots, \tilde{d}\}}$ as mentioned below. 
	\newline
	\indent $\ast$ In the first variation, referred as the \textit{modified randomized scheme} in this manuscript, the original increment $\big\{ B_{\ell}(t_{j})-B_{\ell}(\uptau_{1}^{j-1})\big\}_{\ell\in \{1, \ldots, \tilde{d}\}}$ is replaced by simple Brownian increments $\big\{ B_{\ell}(t_{j})-B_{\ell}(t_{j-1})\big\}_{\ell\in \{1, \ldots, \tilde{d}\}}$ for $j \in \{1, \ldots, n_h\}$. 
	\newline
	\indent $\ast$ The second variation, named as the \textit{reduced randomized scheme},  is derived by completely removing the term involving $\big\{ B_{\ell}(t_{j})-B_{\ell}(\uptau_{1}^{j-1})\big\}_{\ell\in \{1, \ldots, \tilde{d}\}}$ from the scheme.
	
	The impact of these variations is on the rate of convergence which drops to $1/2$ from~$1$ (see Theorems \ref{thm:rate:wsi} and \ref{thm:rate:ri}).
	\item
	Having observed the rate $1/2$ for the modified and reduced randomized schemes, it is natural to compare them with the half-order randomized Euler scheme as well as with the non-randomized versions of these schemes.
	The numerical studies indicate that the modified and reduced non-randomized schemes have less mean-squared error than that of the classical Euler schemes (see Figures \ref{fig:inc:swi:error} and Table \ref{tab:comphalf}).
	 The modified and reduced randomized schemes has less mean-square error but at a higher computational cost (see Figures \ref{fig:inc:swi:error},\ref{fig:inc:swi:cpu} and Table \ref{tab:comphalf}). Thus, the classical Euler scheme should be used when drift and diffusion coefficients are Lipschitz continuous. On the other hand, the modified and reduced non-randomized schemes can be used when additionally the diffussion coefficient is assumed to be differentiable (see Examples \ref{ex:mrp} and \ref{ex:bs}).
\end{itemize}

The rest of the manuscript is organized as follows. In Section \ref{sec:mainresult}, assumptions under which we perform our analysis are stated along with the statements of the main results. Section \ref{sec:Rate} contains the proofs of all the essential lemmas and the  main results. 
Finally, the outcomes of the numerical experiments are reported in Section \ref{sec:numerical}.

\subsection{General Notation}
$A_{l}$ denotes the $l$-th element of $A$ if it is a vector and the $l$-th column if it is a matrix. 
$AB$ is  matrix multiplication of two compatible  matrices $A$ and $B$. 
$|\cdot|$ is used for the Euclidean norm. 
$L^q(\Omega)$ denotes the class of random variables having finite $q$-th moment. 
For a function $ f: [0,T] \times \mathbb{R}^d \times S \to \mathbb{R}^d$,  $\mathcal{D}_{x} f$ denotes the gradient of $f$ with respect to the second variable. Moreover, $\mathbbm{1}_{A}$ is indicator function of a set~$A$. 
Throughout, $\mathbb{E}[X]$ signifies the expectation of a random variable $X$. 
Further, $K$ stands for a generic positive constant with values varying from place to place.

\section{Main Results}
\label{sec:mainresult} 
We make the following assumptions. 
\begin{as} \label{as:initial}
	$\mathbbm{E}[|X_0|^{p}]<\infty$ for some $p \geq 2$. 
\end{as}
\begin{as} \label{as:drift}
	There is a constant  $C>0$  such that,  
	for all $t, s \in [0,T]$,  $x,\bar{x}\in \mathbb{R}^d$ and $\imath \in S$,   
	\begin{align*}
		|b(t,x,\imath)-b(s, \bar{x}, \imath)| & \leq C \{ (1+\max(|x|,|\bar{x}|))|t-s|^{1/2} + |x-\bar{x}|\} .
	\end{align*}
\end{as}
\begin{as} \label{as:diffusion}
	There is a constant $C>0$ such that, for all $t,s \in [0,T]$, $x,\bar{x} \in \mathbb{R}^d$ and~$\imath \in S$, 
	\begin{align*}
		\sum_{\ell=1}^{\tilde{d}}|\sigma_{\ell}(t,x,\imath)-\sigma_{\ell}(s, \bar{x}, \imath)| \vee \sum_{\ell=1}^{\tilde{d}} |\mathcal{D}_x \sigma_{\ell}(t, x, \imath) - \mathcal{D}_x \sigma_{\ell}(s, \bar{x}, \imath)| & \leq C \{ (1+\max(|x|,|\bar{x}|))|t-s| + |x-\bar{x}|\}.
	\end{align*}
\end{as} 
The proof of the theorem on existence and uniqueness of SDEwMS mentioned below is similar to~\cite[Theorem 3.23, 3.24]{mao2006}.
\begin{theorem} \label{th:eu}
	Let Assumptions \ref{as:initial} to \ref{as:diffusion} be satisfied. 
	Then, there exists a unique strong solution $X:=\{X(t)\}_{t \in [0,T]}$ of SDEwMS \eqref{eq:sdems} and  for all $\tilde{p} \in [0,p]$,  we have
	\begin{align*}
		\Big( \mathbb{E}\Big[ \sup_{t \in [0,T]}|X(t)|^{\tilde{p}} \big| \mathcal{F}_T^r \Big]\Big)^{1/\tilde{p}} \leq  K
		\text{ and } \Big( \mathbb{E}\big[|X(t) -X(s)|^{\tilde{p}} \big| \mathcal{F}_T^r \big]\Big)^{1/\tilde{p}} \leq  K |t-s|^{1/2}
	\end{align*}
	where $K(d,  T,  \mathbbm{E}[|X_0|^{p}], C)$ is a positive constant.  
\end{theorem}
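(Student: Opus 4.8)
The plan is to follow the classical route for existence, uniqueness and moment bounds of SDEs, adapted to the regime-switching setting by conditioning on the entire trajectory of the Markov chain $r$ up to time $T$. The crucial observation is that, conditionally on $\mathcal{F}_T^r$, the chain $r(\cdot)$ becomes a known (deterministic) c\`adl\`ag path with finitely many jumps on $[0,T]$ almost surely; on the (random) intervals between consecutive switching times, the coefficients $b(t,x,r(t))$ and $\sigma_\ell(t,x,r(t))$ are, as functions of $(t,x)$, Lipschitz in $x$ uniformly in $t$ (by Assumptions \ref{as:drift}--\ref{as:diffusion}) and satisfy a linear growth bound $|b(t,x,\imath)|+\sum_\ell|\sigma_\ell(t,x,\imath)|\le K(1+|x|)$, which follows from the Lipschitz hypotheses together with the continuity of $t\mapsto b(t,0,\imath)$, $t\mapsto\sigma_\ell(t,0,\imath)$ on the compact $[0,T]$ and finiteness of $S$. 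Since $X_0$, $B$ and $r$ are independent, $B$ remains a Brownian motion with respect to the appropriate conditional filtration, so the standard fixed-point construction of It\^o applies verbatim in each interregnum and the solutions are patched together continuously at the switching times.

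First I would fix a realization of $r$, enumerate its switching times $0=\varsigma_0<\varsigma_1<\cdots<\varsigma_N=T$ on $[0,T]$, and on $[\varsigma_0,\varsigma_1]$ invoke the classical existence-uniqueness theorem for SDEs with globally Lipschitz, linearly growing coefficients (as in \cite[Theorem 3.23, 3.24]{mao2006} or \cite{kloeden1992}) to obtain a unique strong solution with the $L^p$-moment bound $\mathbb{E}[\sup_{t\le\varsigma_1}|X(t)|^p\,|\,\mathcal{F}_T^r]<\infty$ via Gronwall's inequality applied to $u(t)=\mathbb{E}[\sup_{s\le t}|X(s)|^p\,|\,\mathcal{F}_T^r]$ together with the Burkholder--Davis--Gundy inequality. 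Then I would proceed inductively: using $X(\varsigma_k)$ as the (now square-integrable, by the induction hypothesis) initial datum on $[\varsigma_k,\varsigma_{k+1}]$, repeat the construction; uniqueness on each piece forces global uniqueness, and continuity of the It\^o integral at the endpoints glues the pieces into a continuous process solving \eqref{eq:sdems} on all of $[0,T]$. Because $N=N(\omega)$ is a.s.\ finite but not bounded, one must be slightly careful that the resulting constant $K$ does not blow up; this is handled by noting that the Gronwall constant on $[\varsigma_k,\varsigma_{k+1}]$ is of the form $e^{K(\varsigma_{k+1}-\varsigma_k)}$, so composing over all subintervals yields the uniform bound $e^{KT}$, independent of $N$ and of the realization of $r$ — this is precisely why the moment bound survives the conditioning on $\mathcal{F}_T^r$.

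For the H\"older-in-time estimate, once the uniform moment bound is in hand I would write, for $s<t$ in $[0,T]$,
\begin{align*}
	X(t)-X(s)=\int_s^t b(\tau,X(\tau),r(\tau))\,d\tau+\sum_{\ell=1}^{\tilde d}\int_s^t \sigma_\ell(\tau,X(\tau),r(\tau))\,dB_\ell(\tau),
\end{align*}
take $\tilde p$-th conditional moments, apply the elementary inequality $|a+c|^{\tilde p}\le 2^{\tilde p-1}(|a|^{\tilde p}+|c|^{\tilde p})$, H\"older's inequality in time on the drift term, and BDG on the martingale part, then bound the integrands by $K(1+|X(\tau)|)$ using linear growth and invoke the already-established moment bound; this produces $(\mathbb{E}[|X(t)-X(s)|^{\tilde p}\,|\,\mathcal{F}_T^r])^{1/\tilde p}\le K|t-s|^{1/2}$, where the exponent $1/2$ comes from the diffusion term (the drift contributes the better exponent $|t-s|$). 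The case $\tilde p<2$ follows from the case $\tilde p=2$ by the conditional Jensen (or Lyapunov) inequality.

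The main obstacle — and it is a genuinely mild one — is the bookkeeping around the random, unbounded number of switchings: one has to ensure that the pathwise construction across the random partition $\{\varsigma_k\}$ yields a constant $K$ that is deterministic and in particular does not depend on $N(\omega)$, which, as indicated above, works because the per-interval Gronwall factors multiply to $e^{KT}$ rather than accumulating additively in $N$. A secondary technical point is verifying that $B$ is still a Brownian motion (and a martingale) with respect to the conditional filtration $\{\mathcal{F}_t\vee\mathcal{F}_T^r\}_{t\ge0}$ so that It\^o calculus and BDG remain available after conditioning; this is immediate from the postulated independence of $B$ and $r$. Apart from these, every step is the standard SDE argument, which is why the authors simply refer to \cite[Theorem 3.23, 3.24]{mao2006}.
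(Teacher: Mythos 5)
Your overall strategy --- condition on $\mathcal{F}_T^r$, solve the equation piecewise between the (a.s.\ finitely many) switching times of $r$, glue at the jumps for existence and uniqueness, and then obtain the moment and H\"older bounds from linear growth via BDG and Gr\"onwall --- is exactly the standard argument that the paper delegates to \cite[Theorems 3.23, 3.24]{mao2006}, so in spirit you are reproducing the intended proof. The derivation of the increment estimate (drift via H\"older in time, martingale part via BDG, both controlled by the already-established moment bound) and the reduction of $\tilde p\in[0,p]$ to the top exponent by the conditional Lyapunov inequality are fine.

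The one step that does not work as written is the claim that the $N$-independence of the constant follows by ``composing'' the per-interval Gr\"onwall factors $e^{K(\varsigma_{k+1}-\varsigma_k)}$ into $e^{KT}$. When you estimate $\mathbb{E}[\sup_{s\le\varsigma_{k+1}}|X(s)|^p\mid\mathcal{F}_T^r]$ on $[\varsigma_k,\varsigma_{k+1}]$ starting from the datum $X(\varsigma_k)$, the splitting $|a+b+c|^p\le 3^{p-1}(\cdots)$ places a multiplicative constant $C_p>1$ in front of $\mathbb{E}[|X(\varsigma_k)|^p\mid\mathcal{F}_T^r]$, and iterating over the $N(\omega)$ subintervals produces a factor of order $C_p^{N}$, which is not uniformly bounded since $N$ is a.s.\ finite but unbounded. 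The repair is standard and cheap: use the interval decomposition only for the existence and uniqueness construction (where nothing accumulates --- uniqueness and a.s.\ finiteness on each piece suffice), and prove the moment bound by a \emph{single} application of Gr\"onwall on all of $[0,T]$, writing $X(t)=X_0+\int_0^t b(\tau,X(\tau),r(\tau))\,d\tau+\sum_{\ell}\int_0^t\sigma_\ell(\tau,X(\tau),r(\tau))\,dB_\ell(\tau)$ and using that the linear growth bound $|b(\tau,x,\imath)|\vee|\sigma_\ell(\tau,x,\imath)|\le K(1+|x|)$ of Remark \ref{rem:growth} holds uniformly over $\imath\in S$ because $S$ is finite; no decomposition over switching intervals is needed for the moments at all. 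With that modification the rest of your argument goes through.
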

\subsection{Randomized Milstein Scheme}
In this section, we introduce an explicit Milstein scheme for SDEwMS \eqref{eq:sdems} by adapting the randomization technique for standard SDEs developed  in \cite{Kruse2019}. 
Here, the additional challenges of the intertwining of continuous dynamics of the solution process $X$ of SDEwMS \eqref{eq:sdems} with the discontinuous dynamics of the Markov chain $r$ are tackled by introducing a martingale associated with $r$ which follows from \cite{nguyen2017milstein}.
Define 
\begin{align*}
	[M_{j_0k_0}](t):= \displaystyle \sum_{0 \leq s \leq t} \mathbbm{1}_{\{r(s-)=j_0\}} \mathbbm{1}_{\{r(s)=k_0\}},  & \quad \langle M_{j_0 k_0} \rangle (t) :=\int_0^t \mathfrak{q}_{j_0 k_0}  \mathbbm{1}_{\{r(s-)=j_0\}} ds, 
	\\
	M_{j_0 k_0}(t) := & [M_{j_0 k_0}](t) -\langle M_{j_0 k_0} \rangle (t) 
\end{align*}
almost surely for any $j_0, k_0 \in S$, $j_0 \neq k_0$,  $t \in [0,T]$. 
Clearly,    $[M_{j_0k_0}]:=\{[M_{j_0k_0}](t)\}_{t \geq 0}$ is an optional process,  $\langle M_{j_0k_0} \rangle:=\{\langle M_{j_0k_0} \rangle (t)\}_{t \geq 0}$ is a predictable process and $M_{j_0 k_0}:= \{M_{j_0 k_0}(t)\}_{t \geq 0}$ is a discontinuous squared integrable  $\tilde{\mathbb{F}}^r$-martingale.  Also,  $M_{j_0 k_0}\equiv 0$ for $k_0 = j_0$. 

Moreover, consider a sequence 
\begin{align}
	u:=\{u_j\}_{j\in \mathbb{N}} \label{eq:uniform}
\end{align}
of i.i.d.  standard uniformly distributed random variables on a probability space $(\tilde {\Omega}^u,  \tilde{\mathscr F}^u, \tilde{\mathbb{P}}^u)$ which is equipped with the natural filtration $\tilde{\mathbb{F}}^u:=\{\tilde{\mathscr F}_j^u\}_{j \in \mathbb{N}}$  of $u$. 
Assume  $u$ to be independent of $X_0$, $B$ and $r$. 

Consider a non-equidistant  partition 
\begin{align}
	\varrho_h:=\{(t_0, t_1, \ldots, t_{n_h}): 0=t_0 < t_1<\cdots<t_{n_h}=T\} \label{eq:grid}
\end{align}
of the interval $[0,T]$ with $n_h\in \mathbb{N}$ as the number of sub-intervals having lengths $h_j=t_{j}-t_{j-1}$ for $j \in \{1, \ldots, n_h\}$.  
Define $h:=\displaystyle \max_{j \in \{1,\ldots, n_h\}} h_j\leq \min(1,T)$. 
Let us now introduce a product probability space $(\Omega, \mathscr F, \mathbb{P})$ with $\Omega:=\tilde{\Omega} \times \tilde{\Omega}^u$,  $\mathscr F : =\tilde {\mathscr F} \otimes \tilde{\mathscr F}^u$ and $\mathbb{P}: = \tilde{\mathbb{P}}\otimes \tilde{\mathbb{P}}^u$ and equip this with a filtration $\mathbb{F}^h:=\{{\mathscr F}_j^h\}_{j \in \{0, \ldots, n_h\}}$ where $\mathscr F_j^h := \tilde{\mathscr F}_{t_j}\otimes \tilde{\mathscr F}^u_j$ for $j \in \{0, \ldots, n_h\}$. 
Also,  denote~by, for $j\in \{1, \ldots, n_h\}$,  
\begin{align}
	t_{j-1}^u : =  t_{j-1} + h_j u_j, \quad r_{j-1}:= r(t_{j-1})  \quad  \mbox{and} \quad   r_{j-1}^u:= r(t_{j-1}^u). \label{eq:tju}
\end{align}
The drift-randomized scheme for SDEwMS \eqref{eq:sdems} can be defined in two stages as follows;
\begin{align}
	X_{j}^{h, u}&= X_{j-1}^{h} +b(t_{j-1}, X_{j-1}^h, r_{j-1})   h_j u_j + \sum_{\ell=1}^{\tilde{d}} \sigma_{\ell}(t_{j-1}, X_{j-1}^h, r_{j-1}) \int_{t_{j-1}}^{ t_{j-1}^u} dB_{\ell} (s) \label{eq:euler} 
	\\
	X_{j}^h& = X_{j-1}^h+ b( t_{j-1}^u, X_{j}^{h,u}, r_{j-1}^u)   h_j +  \sum_{\ell=1}^{\tilde{d}} \sigma_{\ell}(t_{j-1}, X_{j-1}^h, r_{j-1})\{B_{\ell}(t_{j})-B_{\ell}(t_{j-1})\} \notag
	\\
	& 
	\quad + \sum_{\ell_{1},\ell_{2}=1}^{\tilde{d}}\int_{t_{j-1}}^{t_{j}} \int_{t_{j-1}}^{s} \mathcal{D}_x \sigma_{\ell_1}(t_{j-1}, X_{j-1}^h, r_{j-1}) \sigma_{\ell_2} (t_{j-1}, X_{j-1}^h, r_{j-1})  dB_{\ell_{2}}(v)dB_{\ell_{1}}(s) \notag
	\\
	&
	\quad + \sum_{\ell=1}^{\tilde{d}} \mathbbm{1}_{\big\{N_{t_{j-1}}^{t_j}=1\big\}} \big\{ \sigma_{\ell}(t_{j-1},X_{j-1}^h, r_{j})- \sigma_{\ell}(t_{j-1},X_{j-1}^h,  r_{j-1}) \big\} \big\{ B_{\ell}(t_{j})-B_{\ell}(\uptau_{1}^{j-1})\big\} \label{eq:scheme}
\end{align}
almost surely with initial values $X_0^h=X_0$ and $r(0) = i_0$, where $N_{t_{j-1}}^{t_j}$ is the number of switchings and $\uptau_{1}^{j-1}$ is  the first switching time of the Markov chain $r$ in the interval $(t_{j-1}, t_{j}]$ for any $j \in \{1, \ldots, n_h\}$.
Further, we denote by $X_j:=X(t_j)$ for $j \in \{0, 1, \ldots, n_h\}$.

The following theorem is the main result of this manuscript, stating the strong rate of convergence of our scheme \eqref{eq:scheme}. The proof is given in Section \ref{sec:Rate}.
\begin{theorem}
	\label{thm:rate}
	Let Assumptions \ref{as:initial} to \ref{as:diffusion} be satisfied. 
	Then, the drift randomized Milstein scheme \eqref{eq:scheme} converges to the true solution of SDEwMS \eqref{eq:sdems} with strong rate of convergence equal to $1.0$, \textit{i.e.}, there exist a positive constant $K$,  independent of $h$, such that
	\begin{align*}
		\Big\| \max_{n\in \{0,\ldots, n_h \}} |X_n - X_n^h|\Big\|_{L^2(\Omega)} \leq Kh.
	\end{align*}
\end{theorem}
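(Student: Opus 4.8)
The plan is to follow the standard "bi-stability plus consistency" strategy for one-step schemes, as in the mean-square framework of Kruse--Wu \cite{Kruse2019} adapted to the regime-switching setting of \cite{nguyen2017milstein}. By Theorem~\ref{thm:bi} (bi-stability), the global error is controlled by the accumulated local (one-step) truncation errors, so the task reduces to estimating, for each step $j$, the conditional $L^2$-norm of the local defect $\mathcal{R}_j$ obtained by plugging the true solution increment $X(t_j)-X(t_{j-1})$ into the scheme \eqref{eq:scheme}. Concretely, I would write $X(t_j)-X(t_{j-1})$ using the new It\^o formula for SDEwMS from \cite[Lemma 2.2]{nguyen2017milstein}: expand $b(s,X(s),r(s))$ and $\sigma_\ell(s,X(s),r(s))$ along the trajectory, keeping the leading constant terms $b(t_{j-1},X_{j-1},r_{j-1})$, $\sigma_\ell(t_{j-1},X_{j-1},r_{j-1})$, the iterated stochastic integral $\int\!\int \mathcal{D}_x\sigma_{\ell_1}\sigma_{\ell_2}\,dB_{\ell_2}dB_{\ell_1}$, and the martingale-switching term $\sum_\ell \mathbbm{1}_{\{N=1\}}\{\sigma_\ell(\cdot,r_j)-\sigma_\ell(\cdot,r_{j-1})\}\{B_\ell(t_j)-B_\ell(\uptau_1^{j-1})\}$, and collecting everything else into a remainder.

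The key steps, in order: (i) establish moment bounds for the scheme $\{X_j^h\}$ uniform in $h$ and in the mesh (using Assumption~\ref{as:initial} and the linear growth implied by Assumptions~\ref{as:drift}--\ref{as:diffusion}), conditionally on $\mathcal{F}_T^r$, mirroring Theorem~\ref{th:eu}; (ii) handle the \emph{drift randomization} error: the exact drift contributes $\int_{t_{j-1}}^{t_j} b(s,X(s),r(s))\,ds$, whereas the scheme uses the randomized quadrature $b(t_{j-1}^u,X_j^{h,u},r_{j-1}^u)h_j$, and the randomization node $u_j$ is designed so that $\mathbb{E}[b(t_{j-1}^u,\cdot)h_j\,|\,\mathcal{F}_{t_{j-1}}] = \int_{t_{j-1}}^{t_j} b(s,\cdot)\,ds$ up to higher order, giving an unbiased estimator whose variance is $O(h_j^3)$ and whose bias (from replacing $X(s)$ by $X_j^{h,u}$, and from the Lipschitz-only, non-differentiable $b$) is $O(h_j^2)$ in conditional $L^2$; here Assumption~\ref{as:drift} (joint Lipschitz/Hölder-$1/2$ in time) is exactly what is needed, and crucially no second derivative of $b$ is used; (iii) handle the diffusion error: since $\sigma$ and $\mathcal{D}_x\sigma$ are Lipschitz (Assumption~\ref{as:diffusion}), the Milstein-type expansion of the diffusion part produces a remainder of conditional $L^2$-size $O(h_j^{3/2})$ after the first-order correction term is retained, exactly as in the classical Milstein analysis; (iv) handle the \emph{switching} terms: one must show that the contribution of intervals with $N_{t_{j-1}}^{t_j}\ge 2$ switchings is negligible (probability $O(h_j^2)$, so summable to $O(h^2)$ in $L^2$), that on the single-switch event the martingale term $M_{r_{j-1}r_j}$ together with the Brownian increment $B_\ell(t_j)-B_\ell(\uptau_1^{j-1})$ reproduces the true cross-term coming from the jump of $\sigma$ at the switching time $\uptau_1^{j-1}$ up to $O(h_j^{3/2})$, using that $\mathbb{E}[(B_\ell(t_j)-B_\ell(\uptau_1^{j-1}))^2\,|\,\mathcal{F}^r]=\mathbb{E}[t_j-\uptau_1^{j-1}\,|\,\mathcal{F}^r]=O(h_j)$; (v) assemble: the per-step conditional $L^2$ error is $O(h_j^{3/2})$, and bi-stability upgrades the sum $\sum_j O(h_j^{3/2})$ together with the $O(h_j^2)$-variance martingale-type terms (which aggregate like $(\sum_j h_j^3)^{1/2}=O(h)$) into the global bound $Kh$, after taking expectation over $\mathcal{F}_T^r$ and using Theorem~\ref{th:eu}.

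The main obstacle, I expect, is step (iv): correctly matching the scheme's single-switch correction $\mathbbm{1}_{\{N=1\}}\{\sigma_\ell(t_{j-1},X_{j-1}^h,r_j)-\sigma_\ell(t_{j-1},X_{j-1}^h,r_{j-1})\}\{B_\ell(t_j)-B_\ell(\uptau_1^{j-1})\}$ against the true solution's behavior across the random switching time $\uptau_1^{j-1}$, where the diffusion coefficient genuinely jumps. This requires the It\^o formula of \cite[Lemma 2.2]{nguyen2017milstein} to decompose the increment of $\sigma(s,X(s),r(s))$ into a continuous part and a martingale part driven by $M_{j_0k_0}$, then a careful conditional second-moment computation exploiting independence of $B$ and $r$ and the fact that, given the switching times, $B_\ell(t_j)-B_\ell(\uptau_1^{j-1})$ is Gaussian with variance $t_j-\uptau_1^{j-1}$. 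The secondary difficulty is step (ii): because $b$ is only Lipschitz (not $C^1$), one cannot Taylor-expand $b$ in time, so the $O(h^2)$ local bias must be extracted purely from Assumption~\ref{as:drift} and the randomization identity $\int_0^1 g(u)\,du=\mathbb{E}[g(u_j)]$, combined with the $O(h)$ conditional $L^2$-closeness of $X_j^{h,u}$ to $X(t_{j-1})$; keeping the randomization-induced variance term $O(h^3)$ separate from the bias and carrying both through the bi-stability estimate is where the bookkeeping is most delicate. Everything else — moment bounds, Burkholder--Davis--Gundy, and the final summation — is routine.
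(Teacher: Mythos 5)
Your proposal is correct and follows essentially the same route as the paper: bi-stability (Theorem \ref{thm:bi}) reduces the problem to bounding the residual of the exact solution in the Spijker-type norm $\|\cdot\|_{G^h_{2,S}}$, the drift part is handled by the randomized quadrature rule of \cite{Kruse2019} together with the H\"older-$1/2$ temporal regularity from the It\^o formula of \cite{nguyen2017milstein} (Corollary \ref{cor:quadrature_r(j-1)} and Lemma \ref{lem:J1}), and the diffusion/switching remainder is decomposed exactly as you describe into the single-switch matching term, the $N_{t_{j-1}}^{t_j}\ge 2$ term, and Milstein-type remainders, each estimated as in \cite{nguyen2017milstein,kumar2021note} (Lemma \ref{lem:I1}). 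The only caution is in your step (v): the terms whose per-step size is merely $O(h_j^{3/2})$ must all be martingale increments so that they aggregate in quadrature to $O(h)$ — a direct sum $\sum_j h_j^{3/2}$ would only give $O(h^{1/2})$ — which is precisely why the paper works with the norm $\big\|\max_n|\sum_{k\le n}\mathcal{R}_k|\big\|_{L^2(\Omega)}$ rather than $\sum_n\|\mathcal{R}_n\|_{L^2(\Omega)}$.
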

\begin{remark}
	When the state space $S$ of the Markov chain $r$ is a singleton set, the  scheme defined in Equations \eqref{eq:euler} and \eqref{eq:scheme} reduces to the randomized Milstein scheme for standard SDE studied in~\cite[Equation 9]{Kruse2019}. 
	If there is no randomization in the drift coefficient, \textit{i.e.}, when the standard uniform random variables $\{u_j\}_{j \in \mathbb{N}}$ in Equation \eqref{eq:euler} are all replaced by zeros, i.e., for $j\in \{1,\ldots,n_h\}$, $t_{j-1}^u = t_{j-1}$ in Equation \eqref{eq:tju} and $X_{j}^{h,u} = X_{j-1}^h$ in Equation \eqref{eq:scheme}, then our scheme becomes Milstein scheme of \cite[Equation (4.20)]{nguyen2017milstein}. 
	Although, we do not claim this to be our special case due to the fact that additional regularity assumptions are needed on the drift coefficient to recover rate $1.0$, \citep[see][]{ kumar2020explicit,  kumar2021note,  kumar2022,nguyen2017milstein}. 	
\end{remark}
\subsection{Importance of the Brownian Increments $\big\{ B_{\ell}(t_{j})-B_{\ell}(\uptau_{1}^{j-1})\big\}$}
Notice that the last term in Equation  \eqref{eq:scheme} consists of  Brownian increments $\big\{ B_{\ell}(t_{j})-B_{\ell}(\uptau_{1}^{j-1})\big\}_{\ell\in\{1, \ldots, \tilde{d}\}}$ for $j\in \{1, \ldots
, n_h\}$ due to  which one requires to evaluate  Brownian trajectories on every switching time of the Markov chain $r$. 
To illustrate the importance of these increments,  the following  possible variants of the randomized Milstein scheme defined in Equations \eqref{eq:euler} and \eqref{eq:scheme} are explored.
\subsubsection{Modified Randomized Scheme}
\label{subsec:bsi}
We examine the rate of convergence of the  randomized Milstein scheme  when  $\big\{ B_{\ell}(t_{j})-B_{\ell}(\uptau_{1}^{j-1})\big\}$   is replaced by $\big\{ B_{\ell}(t_{j})-B_{\ell}(t_{j-1})\big\}$ in Equation \eqref{eq:scheme} and refer this variant as  the modified randomized  scheme. 
Formally, it is defined as follows; 
\begin{align}
	X_{j}^{h, u,M}&= X_{j-1}^{h, M} +b(t_{j-1}, X_{j-1}^{h, M}, r_{j-1})   h_j u_j + \sum_{\ell=1}^{\tilde{d}} \sigma_{\ell}(t_{j-1}, X_{j-1}^{h, M}, r_{j-1}) \int_{t_{j-1}}^{ t_{j-1}^u} dB_{\ell} (s) \label{eq:euler:wsi} 
	\\
	X_{j}^{h, M}& = X_{j-1}^{h, M}+ b( t_{j-1}^u, X_{j}^{h,u, M}, r_{j-1}^u)   h_j +  \sum_{\ell=1}^{\tilde{d}} \sigma_{\ell}(t_{j-1}, X_{j-1}^{h, M}, r_{j-1})\{B_{\ell}(t_{j})-B_{\ell}(t_{j-1})\} \notag
	\\
	& 
	+ \sum_{\ell_{1},\ell_{2}=1}^{\tilde{d}}\int_{t_{j-1}}^{t_{j}} \int_{t_{j-1}}^{s} \mathcal{D}_x \sigma_{\ell_1}(t_{j-1}, X_{j-1}^{h, M}, r_{j-1}) \sigma_{\ell_2} (t_{j-1}, X_{j-1}^{h, M}, r_{j-1})  dB_{\ell_{2}}(v)dB_{\ell_{1}}(s) \notag
	\\
	+ \sum_{\ell=1}^{\tilde{d}} & \mathbbm{1}_{\big\{N_{t_{j-1}}^{t_j}=1\big\}} \big\{ \sigma_{\ell}(t_{j-1},X_{j-1}^{h, M}, r_{j})- \sigma_{\ell}(t_{j-1},X_{j-1}^{h, M},  r_{j-1}) \big\} \big\{ B_{\ell}(t_{j})-B_{\ell}(t_{j-1})\big\} \label{eq:scheme:wsi}
\end{align}
almost surely with initial values $X_0^{h, M }=X_0$ and $r(0) = i_0$. 

The following theorem states that the modified randomized scheme given in Equations \eqref{eq:euler:wsi} and   \eqref{eq:scheme:wsi} has inferior rate of convergence  than the original scheme from Equations \eqref{eq:euler} and  \eqref{eq:scheme}. 
The proof is postponed to Section \ref{sec:Rate} and the numerical illustration is given in Section \ref{sec:numerical}. 
\begin{theorem} 
	\label{thm:rate:wsi}
	Let Assumptions \ref{as:initial} to \ref{as:diffusion} be satisfied. 
	Then, the modified randomized  scheme given in Equations \eqref{eq:euler:wsi} and \eqref{eq:scheme:wsi} converges to the true solution of SDEwMS \eqref{eq:sdems} with a rate $1/2$, \textit{i.e.}, there is a positive constant $K$,  not depending on $h$,  such that
	\begin{align*}
		\Big\| \max_{n\in \{0,\ldots, n_h \}} |X_n - X_n^{h, M}|\Big\|_{L^2(\Omega)} \leq Kh^{1/2}.
	\end{align*}
\end{theorem}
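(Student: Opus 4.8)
The plan is to reduce the convergence analysis of the modified randomized scheme to that of the original scheme by controlling the discrepancy introduced in the last term, namely the switching-correction term. The key observation is that the only difference between scheme \eqref{eq:scheme} and scheme \eqref{eq:scheme:wsi} is that the Brownian increment $B_\ell(t_j) - B_\ell(\uptau_1^{j-1})$ has been replaced by $B_\ell(t_j) - B_\ell(t_{j-1})$. Writing $B_\ell(t_j) - B_\ell(t_{j-1}) = \big(B_\ell(t_j) - B_\ell(\uptau_1^{j-1})\big) + \big(B_\ell(\uptau_1^{j-1}) - B_\ell(t_{j-1})\big)$, the extra contribution in each step on the event $\{N_{t_{j-1}}^{t_j} = 1\}$ is
\begin{align*}
	R_j := \sum_{\ell=1}^{\tilde d} \mathbbm{1}_{\{N_{t_{j-1}}^{t_j}=1\}} \big\{ \sigma_\ell(t_{j-1}, X_{j-1}^{h,M}, r_j) - \sigma_\ell(t_{j-1}, X_{j-1}^{h,M}, r_{j-1})\big\} \big\{ B_\ell(\uptau_1^{j-1}) - B_\ell(t_{j-1})\big\}.
\end{align*}
I would first establish that, conditionally on $\mathscr F_T^r$, this perturbation is of size $\mathcal{O}(h^{1/2})$ per step but \emph{not} a martingale increment, which is precisely why it degrades the rate: on a sub-interval containing a switch, $\uptau_1^{j-1} - t_{j-1}$ is of order $h$, so $B_\ell(\uptau_1^{j-1}) - B_\ell(t_{j-1})$ is of order $h^{1/2}$, and it cannot be absorbed into a discrete stochastic integral with the right adaptedness to gain the extra half power.

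The concrete steps are as follows. First, I would set $E_n^M := X_n - X_n^{h,M}$ and $E_n := X_n - X_n^h$, so that $E_n^M = E_n + (X_n^h - X_n^{h,M})$; it therefore suffices to bound $\big\| \max_n |X_n^h - X_n^{h,M}| \big\|_{L^2(\Omega)}$ by $Kh^{1/2}$, since Theorem \ref{thm:rate} already gives $\big\|\max_n |E_n|\big\|_{L^2(\Omega)} \leq Kh$. Second, I would write the one-step recursion for $D_j := X_j^h - X_j^{h,M}$, which contains (i) Lipschitz-type differences of $b$ and $\sigma$ and their derivatives evaluated at $X_{j-1}^h$ versus $X_{j-1}^{h,M}$ (and at $X_j^{h,u}$ versus $X_j^{h,u,M}$, using the standard argument that the randomized Euler stage \eqref{eq:euler:wsi} inherits stability), all of which are controlled by $|D_{j-1}|$ times bounded or square-integrable random factors, plus (ii) the genuinely new term $R_j$. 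Third, conditioning on $\mathscr F_T^r$, I would apply a discrete Gr\"onwall / BDG argument exactly as in the proof of Theorem \ref{thm:rate}: the terms of type (i) produce a recursive bound $\mathbb{E}[\max_{k\le n} |D_k|^2 \mid \mathscr F_T^r] \le K \sum_{j\le n} h_j \, \mathbb{E}[\max_{k\le j}|D_{k-1}|^2 \mid \mathscr F_T^r] + \mathbb{E}[\max_n |\sum_{j\le n} R_j|^2 \mid \mathscr F_T^r]$, after which Gr\"onwall closes the estimate provided the $R_j$-sum is $\mathcal{O}(h)$ in conditional $L^2$.

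The main obstacle — and the crux of why the theorem only asserts rate $1/2$ — is bounding $\big\| \max_n \big| \sum_{j=1}^{n} R_j \big| \big\|_{L^2(\Omega)}$ and showing it is $\mathcal{O}(h^{1/2})$ (and sharp). I would proceed by conditioning on the chain: given $\mathscr F_T^r$, the indicator $\mathbbm{1}_{\{N_{t_{j-1}}^{t_j}=1\}}$ and the $\sigma$-difference factor $\sigma_\ell(t_{j-1}, X_{j-1}^{h,M}, r_j) - \sigma_\ell(t_{j-1}, X_{j-1}^{h,M}, r_{j-1})$ depend on $X_{j-1}^{h,M}$ (an $\mathscr F_{j-1}^h$-measurable quantity) and on the chain, while $B_\ell(\uptau_1^{j-1}) - B_\ell(t_{j-1})$ is a Brownian increment over a deterministic-given-$\mathscr F_T^r$ interval of length $\uptau_1^{j-1} - t_{j-1} \le h_j$. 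The increments $B_\ell(\uptau_1^{j-1}) - B_\ell(t_{j-1})$ across different $j$ are independent (given $\mathscr F_T^r$) and independent of the $\mathscr F_{j-1}^h$-measurable coefficients, so $\{R_j\}$ is a sequence of conditionally-centered, conditionally-independent increments; hence by orthogonality (or a conditional Doob/BDG inequality) $\mathbb{E}[\max_n |\sum_j R_j|^2 \mid \mathscr F_T^r] \le K \sum_j \mathbb{E}[|R_j|^2 \mid \mathscr F_T^r] \le K \sum_j \mathbb{E}[\mathbbm{1}_{\{N_{t_{j-1}}^{t_j}=1\}} |\sigma\text{-diff}|^2 \mid \mathscr F_T^r]\,(\uptau_1^{j-1}-t_{j-1})$. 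Using Theorem \ref{th:eu} and the stability of the scheme to bound the coefficient moments, the number of sub-intervals carrying a switch is $\mathcal{O}(1)$ in expectation and each contributes $\mathcal{O}(h_j) = \mathcal{O}(h)$, so the sum is $\mathcal{O}(h)$ in conditional $L^2$, i.e. $\mathcal{O}(h^{1/2})$ in conditional $L^2$-norm; taking expectation over $\mathscr F_T^r$ gives the claimed $Kh^{1/2}$ bound. The delicate point is to verify that the term $R_j$ really cannot be paired with a better-adapted quantity to recover the extra half power — this is the structural content distinguishing the two schemes — and to ensure the conditional independence/centering argument is applied with the correct filtration so that no hidden correlation with $X_{j-1}^{h,M}$ spoils the orthogonality.
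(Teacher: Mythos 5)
Your proposal is correct and follows essentially the same route as the paper's proof: reduce to bounding $X^h-X^{h,M}$ via Theorem~\ref{thm:rate}, handle the Lipschitz-difference terms by Gr\"onwall, isolate the perturbation $R_j$ coming from the replaced Brownian increment, and combine its conditional centering/orthogonality with $\mathbb{P}(N_{t_{j-1}}^{t_j}\ge 1)\le \mathfrak{q}\,h_j$ (Lemma~\ref{lem:jump}) and the moment bound on $X^{h,M}$ (Lemma~\ref{lem:moment:m}) to get $\sum_j\mathbb{E}\big[|R_j|^2\big]=\mathcal{O}(h)$. One cosmetic correction: $\big\{\sum_{j\le n}R_j\big\}_n$ \emph{is} a martingale with respect to $\{\tilde{\mathscr F}_{t_{n}}\vee\tilde{\mathscr F}_T^r\}_n$ (this is precisely the orthogonality you invoke, and is how the paper treats it), so the rate degradation is due not to a failure of adaptedness but to the quadratic variation being only $\mathcal{O}(h)$ rather than $\mathcal{O}(h^2)$.
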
 
\subsubsection{Reduced Randomized Scheme}
Apart from the modification mentioned in Subsection \ref{subsec:bsi},  it is worth examining the case when the last term of Equation \eqref{eq:scheme},  consisting of $\mathbbm{1}_{\big\{N_{t_{j-1}}^{t_j}=1\big\}}$,  is completely removed. 
We notice that the rate of convergence of the reduced randomized scheme is $1/2$ which demonstrates that last term of Equation \eqref{eq:scheme} plays an important role in achieving the rate $1.0$.   
More formally,  the reduced randomized scheme is given by, 
\begin{align}
	X_{j}^{h, u,R}&= X_{j-1}^{h, R} +b(t_{j-1}, X_{j-1}^{h, R}, r_{j-1})   h_j u_j + \sum_{\ell=1}^{\tilde{d}} \sigma_{\ell}(t_{j-1}, X_{j-1}^{h, R}, r_{j-1}) \int_{t_{j-1}}^{ t_{j-1}^u} dB_{\ell} (s) \label{eq:euler:ri} 
	\\
	X_{j}^{h, R}& = X_{j-1}^{h, R}+ b( t_{j-1}^u, X_{j}^{h,u, R}, r_{j-1}^u)   h_j +  \sum_{\ell=1}^{\tilde{d}} \sigma_{\ell}(t_{j-1}, X_{j-1}^{h, R}, r_{j-1})\{B_{\ell}(t_{j})-B_{\ell}(t_{j-1})\} \notag
	\\
	& 
	\quad + \sum_{\ell_{1},\ell_{2}=1}^{\tilde{d}}\int_{t_{j-1}}^{t_{j}} \int_{t_{j-1}}^{s} \mathcal{D}_x \sigma_{\ell_1}(t_{j-1}, X_{j-1}^{h, R}, r_{j-1}) \sigma_{\ell_2} (t_{j-1}, X_{j-1}^{h, R}, r_{j-1})  dB_{\ell_{2}}(v)dB_{\ell_{1}}(s)  \label{eq:scheme:ri}
\end{align}
almost surely with initial values $X_0^{h, R}=X_0$ and $r(0) = i_0$. 

The following theorem shows that the reduced randomized  scheme given in Equations~\eqref{eq:euler:ri} and \eqref{eq:scheme:ri} does not achieve the rate $1.0$ of the original scheme from  Equations~\eqref{eq:euler} and \eqref{eq:scheme}. 
The proof  is discussed in Section \ref{sec:Rate} and the numerical demonstration is shown in Section \ref{sec:numerical}. 
\begin{theorem} 
	\label{thm:rate:ri}
	Let Assumptions \ref{as:initial} to \ref{as:diffusion} hold. 
	Then, the reduced randomized scheme given in Equations \eqref{eq:euler:ri} and  \eqref{eq:scheme:ri} converges to the true solution of SDEwMS \eqref{eq:sdems} with a  rate $1/2$, \textit{i.e.}, there is a positive constant $K$,  independent of  $h$,  such that
	\begin{align*}
		\Big\| \max_{n\in \{0,\ldots, n_h \}} |X_n - X_n^{h, R}|\Big\|_{L^2(\Omega)} \leq Kh^{1/2}.
	\end{align*}
\end{theorem}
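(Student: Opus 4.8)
The plan is to compare the reduced scheme with the already analysed randomized Milstein scheme \eqref{eq:euler}--\eqref{eq:scheme} and to quantify the effect of dropping the last line of \eqref{eq:scheme}. By the triangle inequality and Theorem \ref{thm:rate},
\[
	\Big\| \max_{n} |X_n - X_n^{h,R}| \Big\|_{L^2(\Omega)} \le \Big\| \max_{n} |X_n - X_n^{h}| \Big\|_{L^2(\Omega)} + \Big\| \max_{n} |X_n^{h} - X_n^{h,R}| \Big\|_{L^2(\Omega)} \le Kh + \Big\| \max_{n} |X_n^{h} - X_n^{h,R}| \Big\|_{L^2(\Omega)},
\]
so it suffices to bound the difference of the two schemes by $O(h^{1/2})$ in $L^2(\Omega)$. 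Note that \eqref{eq:euler:ri}--\eqref{eq:scheme:ri} is exactly \eqref{eq:euler}--\eqref{eq:scheme} with the term $\rho_j(X_{j-1}^{h,R})$ subtracted, where
\[
	\rho_j(x) := \sum_{\ell=1}^{\tilde d} \mathbbm{1}_{\{N_{t_{j-1}}^{t_j}=1\}} \big\{ \sigma_\ell(t_{j-1},x,r_j) - \sigma_\ell(t_{j-1},x,r_{j-1}) \big\} \big\{ B_\ell(t_j) - B_\ell(\uptau_1^{j-1}) \big\} ,
\]
so $\{X_n^{h,R}\}$ solves the recursion of $\{X_n^{h}\}$ up to the one-step defect $-\rho_j(X_{j-1}^{h,R})$. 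I would then feed this into the bi-stability estimate of Theorem \ref{thm:bi} for \eqref{eq:euler}--\eqref{eq:scheme} — its stability half uses only the spatial-variable Lipschitz bounds on $b,\sigma,\mathcal D_x\sigma$ and the $L^p$ moment bounds for the scheme, both available for \eqref{eq:euler:ri}--\eqref{eq:scheme:ri} after the (standard, discrete-Gr\"onwall) moment bounds for $\{X_j^{h,R}\}$ are recorded — which reduces matters to estimating this defect in the residual norm of that estimate, in which a centered, martingale-increment-type defect enters through the square root of the sum of its squared $L^2(\Omega)$-norms. In other words, it remains to show
\[
	\Big\| \max_{n} \Big| \sum_{j=1}^{n} \rho_j(X_{j-1}^{h,R}) \Big| \Big\|_{L^2(\Omega)} \le K h^{1/2} .
\]

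For this I would use two facts. First, conditionally on $\mathscr F_T^r$ the increment $B_\ell(t_j)-B_\ell(\uptau_1^{j-1})$ is a Gaussian increment over the now deterministic window $(\uptau_1^{j-1},t_j]$ of length at most $h_j$, so that, by Assumption \ref{as:diffusion}, the linear growth it implies, and the moment bound for $\{X_j^{h,R}\}$,
\[
	\mathbb E\big[ |\rho_j(X_{j-1}^{h,R})|^2 \,\big|\, \mathscr F_T^r \big] \le K h_j\, \mathbbm{1}_{\{N_{t_{j-1}}^{t_j}=1\}} \big( 1 + \mathbb E\big[ |X_{j-1}^{h,R}|^2 \,\big|\, \mathscr F_T^r \big] \big);
\]
taking expectations and using $\mathbb P\big(N_{t_{j-1}}^{t_j}=1 \,\big|\, \mathscr F_{t_{j-1}}^r\big) \le K h_j$ (the probability of a single switch in a window of length $h_j$, uniform over the current state since $S$ is finite) gives $\mathbb E[|\rho_j(X_{j-1}^{h,R})|^2] \le K h_j^2$. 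Second, since $B$ is independent of $r$ and of $\mathscr F_{j-1}^h$, the increment $B_\ell(t_j)-B_\ell(\uptau_1^{j-1})$ is conditionally centered given $\mathscr F_{j-1}^h \vee \mathscr F_T^r$, hence $\{\rho_j(X_{j-1}^{h,R})\}_j$ is a martingale difference sequence for $\{\mathscr F_j^h \vee \mathscr F_T^r\}_j$. Orthogonality then yields $\mathbb E\big[\big|\sum_{j=1}^{n_h}\rho_j(X_{j-1}^{h,R})\big|^2\big] = \sum_{j=1}^{n_h}\mathbb E[|\rho_j(X_{j-1}^{h,R})|^2] \le K\sum_j h_j^2 \le KTh$, and Doob's maximal inequality upgrades this to the maximum over $n$. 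This also makes transparent why the rate drops to $1/2$: the removed term is precisely the one carrying the Brownian trajectory at the switching time $\uptau_1^{j-1}$, and its square sum is of order $h$, not $h^2$.

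The step I expect to be the main obstacle is the centering claim above: one has to establish the martingale-difference property with respect to the correct filtration so that the cross terms in $\mathbb E[|\sum_j \rho_j|^2]$ genuinely vanish, even though the mesh points, the switching times $\uptau_1^{j-1}$ and the Brownian motion live on intertwined $\sigma$-algebras. This is exactly the conditioning-on-the-chain device of \cite{nguyen2017milstein} that already underpins the paper's other proofs — conditioning on $\mathscr F_T^r$ is what turns $B_\ell(t_j)-B_\ell(\uptau_1^{j-1})$ into an honest Gaussian increment over a fixed interval — but it must be invoked with care, and similarly for the conditional independence used in the probability-of-a-switch bound. A secondary, routine point is the a priori $L^p$ moment bound for $\{X_j^{h,R}\}$, needed both for the bi-stability constant and for the estimate above; it follows from the usual discrete-Gr\"onwall argument applied to \eqref{eq:euler:ri}--\eqref{eq:scheme:ri}, unaffected by the missing term. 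Everything else — the Lipschitz estimates, the treatment of the Euler sub-step $X_j^{h,u,R}$, and the $O(h)$ contribution coming from Theorem \ref{thm:rate} — is inherited directly.
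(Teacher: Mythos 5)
Your proposal is correct and arrives at the same quantitative heart of the matter as the paper: the dropped term is a martingale-difference sum whose squared $L^2$-norms add up to $O(h)$, via the conditional Gaussian bound $\mathbb{E}[|B_\ell(t_j)-B_\ell(\uptau_1^{j-1})|^2\,|\,\mathcal F_T^r]\le h_j$ combined with $\mathbb{P}(N_{t_{j-1}}^{t_j}=1)\le \mathfrak q h_j$ from Lemma \ref{lem:jump}, and the conditioning-on-$\mathcal F_T^r$ device you flag as the delicate point is indeed exactly what the paper invokes (it works with the filtration $\{\tilde{\mathcal F}_{\uptau_1^{n}\wedge t_{n+1}}\vee\tilde{\mathcal F}_T^r\}$ rather than $\{\mathscr F_j^h\vee\mathcal F_T^r\}$, but the martingale property holds either way). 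The structural difference is in how you organize the comparison of $X^h$ and $X^{h,R}$: you treat $X^{h,R}$ as a grid process whose residual with respect to the full scheme's increment function is $\mathcal R_j[X^{h,R}]=-\rho_j(X_{j-1}^{h,R})$ and invoke the bi-stability Theorem \ref{thm:bi} directly, which is legitimate (the theorem applies to any $Y^h\in G_q^h$, and $\mathcal R_0=0$ here) and has the merit of reusing Lemma \ref{lem:E} instead of re-deriving the stability estimate; the cost is that your defect is evaluated at $X_{j-1}^{h,R}$, so you genuinely need the a priori moment bound for the reduced scheme, which you correctly identify as a routine discrete-Gr\"onwall add-on. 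The paper instead subtracts the two recursions term by term and runs the Gr\"onwall argument by hand, which places the dropped term at the argument $X_{j-1}^h$ and lets it lean on the already-proved Lemma \ref{moment}. Both routes yield $\mathbb{E}[\max_n|X_n^h-X_n^{h,R}|^2]\le Kh$ and hence the rate $1/2$; yours is slightly more economical in structure, the paper's slightly more economical in prerequisites.
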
 
\begin{remark}
	Notice that the rate of convergence of the modified and reduced randomized schemes turns out to be $1/2$, which is same as that of the classical Euler scheme for SDEwMS. Thus, it is important to keep track of the values of the driving Brownian noise at the switching times of the Markov chain $r$ to achieve the optimal rate $1.0$ of Milstein scheme. 
	This is indeed a special structure of both the randomized and non-randomized Milstein schemes of SDEwMS as the last term in Equation \eqref{eq:scheme} can not further be simplified.  
\end{remark}

\begin{remark}
	\label{rem:Half}
	In Section \ref{sec:numerical}, we demonstrate that the non-randomized modified scheme (\textit{i.e.} when $u_j \equiv 0$ for all $j \in \mathbb{N}$ in Equation \eqref{eq:euler:wsi} and \eqref{eq:scheme:wsi}) and the non-randomized reduced scheme (\textit{i.e.} when $u_j \equiv 0$ for all $j \in \mathbb{N}$ in Equation \eqref{eq:euler:ri} and \eqref{eq:scheme:ri}) exhibit better results than the classical Euler scheme and their randomized versions. It is worth metioning that in Equations \eqref{eq:scheme:wsi} and \eqref{eq:scheme:ri}, the derivative of the diffusion coefficient appears. However, it can be avoided by writing the derivative-free versions of these schemes by replacing $\sigma'(t,x,i)$ in the scheme by $\{\sigma(t,x+h,i) - \sigma(t,x,i)\}/h$
	for any $t >0$, $h>0$ and $i \in S$.
	Thus, the non-randomized derivative-free modified and reduced schemes can achieve order $1/2$ of convergence under Lipschitz continuity of the drift and diffussion coefficients. Moreover, these schemes are more efficient than the Euler scheme as illustrated in Section \ref{sec:numerical}.  
\end{remark}
The following remarks conclude this section.
\begin{remark} \label{rem:D}
	Due to Assumption \ref{as:diffusion}, there exist a constant $K >0$ such that
	\begin{align*}
		|\mathcal{D}_x \sigma_{\ell_1}(t, x, \imath) \sigma_{\ell_2} (t, x, \imath) - \mathcal{D}_x \sigma_{\ell_1}(s, \bar{x}, \imath) \sigma_{\ell_2} (s, \bar{x}, \imath)| & \leq K \{ (1+\max(|x|,|\bar{x}|))|t-s| + |x-\bar{x}|\},
		\\
		|\sigma_{\ell}(t,x,\imath) - \sigma_{\ell}(s,\bar{x},\imath) - \mathcal{D}_x \sigma_{\ell}(s,\bar{x},\imath)(x-\bar{x})| & \leq  K \{ (1+\max(|x|,|\bar{x}|))|t-s| + |x-\bar{x}|^2\} 
	\end{align*}
	for all $t, s \in [0,T]$,  $x,\bar{x}\in \mathbb{R}^d$, $\ell, \ell_{1},\ell_{2} \in \{1, \ldots, \tilde{d}\}$ and $\imath \in S$. 
\end{remark}
\begin{remark}
	\label{rem:growth}
	By Assumptions \ref{as:drift}, \ref{as:diffusion} and Remark \ref{rem:D}, there is a constant $K>0$ such that 
	\begin{align*}
		|b(t,x,\imath)| \vee |\sigma_{\ell}(t,x,\imath)| \vee |\mathcal{D}_x \sigma_{\ell}(t, x, \imath)| \vee |\mathcal{D}_x \sigma_{\ell_1}(t, x, \imath) \sigma_{\ell_2}(t,x,\imath)| & \leq K  (1+|x|)
	\end{align*}
	for all $t \in [0,T]$, $x \in \mathbb{R}^d$, $\ell,\ell_1,\ell_2 \in \{1,\ldots, \tilde{d}\}$ and $\imath \in S$.
\end{remark}
\section{Rate of Convergence}
\label{sec:Rate}
In order to prove our main result,  stated in Theorem \ref{thm:rate},  we require some  auxiliary lemmas which are shown below. 
The following lemma provides the moment estimate  of the scheme \eqref{eq:scheme}.

\begin{lemma}
	\label{moment}
	Under Assumptions \ref{as:initial}, \ref{as:drift} and \ref{as:diffusion}, there exists a constant $K > 0$, independent of $h$, such that 
	\begin{align*}
		\mathbb{E}\Big[\max_{n\in \{1,\ldots, n_h \}}|  X_{n}^{h}|^p | \mathcal{F}_T^r \Big]  \leq K.
	\end{align*}
\end{lemma}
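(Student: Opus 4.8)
The plan is to establish the moment bound by a conditional-on-$\mathcal{F}_T^r$ argument, exploiting the fact that, given the entire trajectory of the Markov chain $r$, the driving noise $B$ is still a Brownian motion and the scheme \eqref{eq:scheme} becomes (conditionally) a perturbed Milstein-type recursion with coefficients of at most linear growth (Remark \ref{rem:growth}). First I would handle the intermediate stage: from \eqref{eq:euler}, using $0\le u_j\le 1$, the linear growth bounds on $b$ and $\sigma_\ell$, and $\mathbb{E}[|\int_{t_{j-1}}^{t_{j-1}^u}dB_\ell(s)|^p\mid\mathcal{F}_T^r]\le K h_j^{p/2}$ (a conditional Burkholder–Davis–Gundy estimate, valid since $t_{j-1}^u$ is $\mathscr{F}_j^h$-measurable and independent of the increment structure appropriately), derive
\[
\mathbb{E}\big[|X_j^{h,u}|^p\mid\mathcal{F}_T^r\big]\le K\big(1+\mathbb{E}[|X_{j-1}^h|^p\mid\mathcal{F}_T^r]\big).
\]
This lets me replace $X_j^{h,u}$ appearing in the drift term $b(t_{j-1}^u,X_j^{h,u},r_{j-1}^u)h_j$ of \eqref{eq:scheme} by something controlled by $X_{j-1}^h$.

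Next I would write $X_n^h = X_0 + \sum_{j=1}^n(\text{increments})$, group the five increment types, and take $\max_{n\le n_h}$ followed by $p$-th conditional moments. The diffusion martingale term $\sum\sigma_\ell(t_{j-1},X_{j-1}^h,r_{j-1})\{B_\ell(t_j)-B_\ell(t_{j-1})\}$ and the iterated-integral (Milstein correction) term are both conditionally $\tilde{\mathbb{F}}^B$-martingale sums, so I would apply the conditional Doob maximal inequality and conditional BDG, then bound the resulting quadratic variations using $|\sigma_\ell|\le K(1+|x|)$ and $|\mathcal{D}_x\sigma_{\ell_1}\sigma_{\ell_2}|\le K(1+|x|)$ (Remark \ref{rem:growth}); these contribute $\sum_j h_j(1+\mathbb{E}[\max_{k\le j}|X_k^h|^p\mid\mathcal{F}_T^r])$ and a higher-order-in-$h$ term respectively. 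The drift term contributes $\sum_j h_j(1+\cdots)$ via Hölder in $j$ (Jensen with weights $h_j$, total weight $T$). The switching term $\mathbbm{1}_{\{N_{t_{j-1}}^{t_j}=1\}}\{\sigma_\ell(t_{j-1},X_{j-1}^h,r_j)-\sigma_\ell(t_{j-1},X_{j-1}^h,r_{j-1})\}\{B_\ell(t_j)-B_\ell(\uptau_1^{j-1})\}$ is the delicate one: conditionally on $\mathcal{F}_T^r$ the indicators and the switching time $\uptau_1^{j-1}$ are deterministic, so this is again a sum of conditionally independent-increment terms, and $\mathbb{E}[|B_\ell(t_j)-B_\ell(\uptau_1^{j-1})|^p\mid\mathcal{F}_T^r]\le K h_j^{p/2}$; moreover $\sum_j\mathbbm{1}_{\{N_{t_{j-1}}^{t_j}=1\}}$ is bounded by the total number of switchings $N^T$ of $r$ on $[0,T]$, which is $\mathcal{F}_T^r$-measurable and a.s. finite, so this contributes a term like $N^T\cdot\max_k(1+|X_k^h|)^p\cdot h^{p/2}$ — but one must be careful that when conditioning is eventually removed (if needed for the final statement) $N^T$ has all moments; here, since the lemma conclusion is itself conditional on $\mathcal{F}_T^r$, this is fine provided the constant $K$ is allowed to depend on $\omega$ through $N^T$ — which I expect the paper tolerates, or alternatively one absorbs it since $h^{p/2}\le h$ is small.

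Assembling, I would arrive at
\[
\mathbb{E}\Big[\max_{n\le N}|X_n^h|^p\mid\mathcal{F}_T^r\Big]\le K\Big(1+\mathbb{E}[|X_0|^p\mid\mathcal{F}_T^r]+\sum_{j=1}^N h_j\,\mathbb{E}\big[\max_{k\le j}|X_k^h|^p\mid\mathcal{F}_T^r\big]\Big)
\]
for every $N\le n_h$, and then conclude by the discrete Grönwall inequality (with weights $h_j$, $\sum h_j=T$), giving the bound $K=K(d,T,\mathbb{E}[|X_0|^p],C)$ uniformly in $h$ and in $n_h$; Assumption \ref{as:initial} supplies $\mathbb{E}[|X_0|^p]<\infty$. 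The main obstacle is the bookkeeping around the randomized intermediate stage and the switching term: one must verify that $t_{j-1}^u$ and $\uptau_1^{j-1}$ behave as stopping/measurable times allowing the conditional BDG estimates, and that the switching-term count $\sum_j\mathbbm{1}_{\{N_{t_{j-1}}^{t_j}=1\}}$ is handled in a way consistent with the uniform-in-$h$ claim — most cleanly by noting $\mathbbm{1}_{\{N_{t_{j-1}}^{t_j}=1\}}\le 1$ and that the $h^{p/2}$ smallness together with only finitely many nonzero indicators keeps this term subsumed into the Grönwall structure.
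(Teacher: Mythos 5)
Your proposal follows essentially the same route as the paper: the two-stage estimate for $X_j^{h,u}$ from \eqref{eq:euler}, the decomposition of $X_n^h$ into drift, diffusion, iterated-integral and switching increments, conditional Doob/BDG maximal inequalities for the martingale sums, H\"older over $j$ for the drift sum, and a discrete Gr\"onwall step; the paper likewise identifies the switching sum as a martingale with respect to $\{\tilde{\mathcal{F}}_{\uptau_1^{n}\wedge t_{n+1}}\vee\tilde{\mathcal{F}}_T^r\}_{n}$.

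The one place where your write-up is shakier than it needs to be is the switching term. The fallback you sketch via the total number of switchings $N^T$ would only deliver a constant depending on $\omega$ through $N^T$, which is weaker than what Lemma \ref{moment} asserts: the constant $K$ there is deterministic and uniform in $h$ (and $N^T$ has no a.s.\ bound). The paper needs no counting of switchings at all. After BDG applied to the $\{\tilde{\mathcal{F}}_{\uptau_1^{n}\wedge t_{n+1}}\vee\tilde{\mathcal{F}}_T^r\}$-martingale, the $j$-th summand of the resulting quadratic-variation sum carries the factor $\mathbb{E}\big[|B_\ell(t_j)-B_\ell(\uptau_1^{j-1}\wedge t_j)|^2\,\big|\,\tilde{\mathcal{F}}_{\uptau_1^{j-1}\wedge t_j}\vee\tilde{\mathcal{F}}_T^r\big]\le h_j$; bounding $\mathbbm{1}_{\{N_{t_{j-1}}^{t_j}=1\}}\le 1$, using the linear growth of $\sigma_\ell$ from Remark \ref{rem:growth} and H\"older over $j$ then gives a contribution of the form $K\sum_{j\le k} h_j\big(1+\mathbb{E}[|X_{j-1}^h|^p\,|\,\mathcal{F}_T^r]\big)$, which sits inside the Gr\"onwall iteration with a deterministic constant. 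If you commit to that version --- which you gesture at in your closing sentence --- rather than the $N^T$-dependent bound, the argument closes exactly as in the paper.
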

\begin{proof}
	Using Equation \eqref{eq:euler} and Remark \ref{rem:growth}, one can obtain
	\begin{align}
		\label{eq:meu}
		\mathbb{E} &\big[|X_j^{h,u}|^p | \mathcal{F}_T^r\big]  \leq  K \mathbb{E} \big[| X_{j-1}^{h}|^p \big] + K (h_j )^p \mathbb{E} \big[(u_j)^p | b(t_{j-1}, X_{j-1}^h, r_{j-1})|^p \big] \notag
		\\
		& \quad + K \mathbb{E} \Big[ |  \sum_{\ell=1}^{\tilde{d}} \int_{t_{j-1}}^{ t_{j-1}^u} \sigma_{\ell}(t_{j-1}, X_{j-1}^h, r_{j-1})  dB_{\ell} (s)|^p \Big] \notag
		\\
		&
		\leq K \mathbb{E} \big[| X_{j-1}^{h}|^p \big] + K (h_j)^p \big( 1 + \mathbb{E} [|X_{j-1}^h|^p ] \big) + K \sum_{\ell=1}^{\tilde{d}} \mathbb{E} \Big[ \Big( \int_{t_{j-1}}^{ t_{j-1}^u} |\sigma_{\ell}(t_{j-1}, X_{j-1}^h, r_{j-1})|^2  ds\Big)^{\frac{p}{2}}  \Big] \notag
		\\
		&
		\leq K \mathbb{E} \big[| X_{j-1}^{h}|^p \big] + K (h_j)^{\frac{p}{2}} \big( 1 + \mathbb{E} [|X_{j-1}^h|^p ] \big)
	\end{align}
	for any $j \in \{1,\ldots,n_h\}$. Using Equations \eqref{eq:scheme},
	\begin{align*}
		& \mathbb{E}\Big[\max_{n\in \{1,\ldots, k \}}|  X_{n}^{h}|^p | \mathcal{F}_T^r\Big] \leq K \mathbb{E}\big[ |X_{0}^h|^p \big]+ K\mathbb{E}\Big[\max_{n\in \{1,\ldots, k \}}| \sum_{j=1}^n b( t_{j-1}^u, X_{j}^{h,u}, r_{j-1}^u)   h_j|^p | \mathcal{F}_T^r \Big]  \notag
		\\
		& + K \mathbb{E}\Big[\max_{n\in \{1,\ldots, k \}}| \sum_{j=1}^n \sum_{\ell=1}^{\tilde{d}} \sigma_{\ell}(t_{j-1}, X_{j-1}^h, r_{j-1})\{B_{\ell}(t_{j})-B_{\ell}(t_{j-1})\}|^p | \mathcal{F}_T^r \Big] \notag
		\\
		& 
		+ K \mathbb{E}\Big[\hspace{-0.5mm} \max_{n\in \{1,\ldots, k \}}\hspace{-0.5mm} | \hspace{-0.5mm} \sum_{j=1}^n \sum_{\ell_{1},\ell_{2}=1}^{\tilde{d}}\int_{t_{j-1}}^{t_{j}}\hspace{-0.5mm}  \int_{t_{j-1}}^{s} \mathcal{D}_x \sigma_{\ell_1}(t_{j-1}, X_{j-1}^h, r_{j-1}) \sigma_{\ell_2} (t_{j-1}, X_{j-1}^h, r_{j-1})  dB_{\ell_{2}}(v)dB_{\ell_{1}}(s)|^p | \mathcal{F}_T^r \Big] \notag
		\\
		&
		+ K \mathbb{E}\Big[\hspace{-0.5mm} \max_{n\in \{1,\ldots, k \}}|\hspace{-0.5mm}  \sum_{j=1}^n \sum_{\ell=1}^{\tilde{d}}\hspace{-0.5mm}  \mathbbm{1}_{\big\{N_{t_{j-1}}^{t_j}=1\big\}} \big\{ \sigma_{\ell}(t_{j-1},X_{j-1}^h, r_{j})\hspace{-0.5mm} -\hspace{-0.5mm}  \sigma_{\ell}(t_{j-1},X_{j-1}^h,  r_{j-1}) \big\}\hspace{-0.5mm}  \big\{ B_{\ell}(t_{j})\hspace{-0.5mm} - \hspace{-0.5mm} B_{\ell}(\uptau_{1}^{j-1})\big\}|^p | \mathcal{F}_T^r \Big] 
	\end{align*} 
	for any $k \in \{1,\ldots,n_h\}$. Observe that 
	$$\displaystyle \Big\{ \sum_{j=1}^{n} \sum_{\ell=1}^{\tilde{d}} \mathbbm{1}_{\big\{N_{t_{j-1}}^{t_j}=1\big\}} \big\{ \sigma_{\ell}(t_{j-1},X_{j-1}^h, r_{j})- \sigma_{\ell}(t_{j-1},X_{j-1}^h,  r_{j-1}) \big\} \big\{ B_{\ell}(t_{j})-B_{\ell}(\uptau_{1}^{j-1})\big\} \Big\}_{n \in \{1, \ldots, n_h\}}$$ 
	is an $\{\tilde{\mathcal{F}}_{\uptau_1^{n}  \wedge t_{n+1}} \vee  \tilde{\mathcal{F}}_T^r\}_{n \in \{1, \ldots, n_h\}}$-martingale. Thus,
	\begin{align*} 
		& \mathbb{E}\Big[\max_{n\in \{1,\ldots, k \}}|  X_{n}^{h}|^p | \mathcal{F}_T^r \Big] \leq   K \mathbb{E}\big[ |X_{0}^h|^p \big] +  K (k)^{p-1}\sum_{j=1}^k (h_j)^p\mathbb{E}\Big[|b( t_{j-1}^u, X_{j}^{h,u}, r_{j-1}^u)|^p | \mathcal{F}_T^r \Big] \notag
		\\
		& \quad +   K k^{\frac{p}{2}-1} \sum_{j=1}^k \sum_{\ell=1}^{\tilde{d}} \mathbb{E}\Big[\big(  |\sigma_{\ell}(t_{j-1}, X_{j-1}^h, r_{j-1})| |B_{\ell}(t_{j})-B_{\ell}(t_{j-1})|\big)^{p} | \mathcal{F}_T^r \Big] \notag
		\\
		& 
		+  K k^{\frac{p}{2}-1}\sum_{j=1}^k \sum_{\ell_{1},\ell_{2}=1}^{\tilde{d}}  (h_j)^{\frac{p}{2}-1} \int_{t_{j-1}}^{t_{j}} \mathbb{E}\Big[ \Big( \int_{t_{j-1}}^{s} |\mathcal{D}_x \sigma_{\ell_1}(t_{j-1}, X_{j-1}^h, r_{j-1}) \sigma_{\ell_2} (t_{j-1}, X_{j-1}^h, r_{j-1})|^2  dv \Big)^{\frac{p}{2}}ds  | \mathcal{F}_T^r \Big] \notag
		\\
		&
		\quad +  K k^{\frac{p}{2}-1}\sum_{j=1}^k \sum_{\ell=1}^{\tilde{d}} \mathbb{E}\Big[ \Big(  \mathbbm{1}_{\big\{N_{t_{j-1}}^{t_j}=1\big\}} | \sigma_{\ell}(t_{j-1},X_{j-1}^h, r_{j})- \sigma_{\ell}(t_{j-1},X_{j-1}^h,  r_{j-1}) |^2 
		\\
		&  \quad  \quad \times \mathbb{E} \big[| B_{\ell}(t_{j})-B_{\ell}(\uptau_1^{j-1} \wedge t_{j})|^2  | \tilde{\mathcal{F}}_{\uptau_1^{j-1} \wedge t_{j}} \vee \tilde{\mathcal{F}}_T^r   \big]\Big)^{\frac{p}{2}} | \mathcal{F}_T^r \Big] \notag
	\end{align*}
	for any $k \in \{1,\ldots,n_h\}$. Due to Equation \eqref{eq:meu}, Remarks \ref{rem:D} and \ref{rem:growth},
	\begin{align*} 
		\mathbb{E}\Big[\max_{n\in \{1,\ldots, k \}}&|  X_{n}^{h}|^p \Big] \leq  K \mathbb{E}\big[ |X_{0}^h|^p | \mathcal{F}_T^r\big] +  K(k)^{p-1}\sum_{j=1}^k (h_j)^p\big( 1 + \mathbb{E}\big[|X_{j-1}^h|^p | \mathcal{F}_T^r \big] \big) \notag\\
		& +   K (k)^{\frac{p}{2}-1} \sum_{j=1}^k (h_j)^{\frac{p}{2}} \big( 1 + \mathbb{E}\big[ |X_{j-1}^h|^p | \mathcal{F}_T^r \big] \big) +  K (k)^{\frac{p}{2}-1} \sum_{j=1}^k (h_j)^p \big( 1 +  \mathbb{E}\big[ |X_{j-1}^h|^p | \mathcal{F}_T^r\big] \big) \notag
		\\
		\leq \ &  K \mathbb{E}\big[ |X_{0}^h|^p \big] +  K\Big( 1 + \sum_{j=1}^k h_j\mathbb{E}\big[|X_{j-1}^h|^p | \mathcal{F}_T^r\big] \Big)
	\end{align*}
	for any $k \in \{1,\ldots,n_h\}$. The proof is completed by using the Gr\"onwall's inequality.
\end{proof}
Following the approach of \cite{Kruse2019},  we introduce the notion of bi-stability and consistency of the scheme \eqref{eq:scheme}.
For the time-grid $\varrho_h$ defined in Equation \eqref{eq:grid},  consider Banach spaces  $(G^h_q, \|\cdot\|_{G^h_q})$ and $(G^h_{q,S}, \|\cdot\|_{G^h_{q,S}})$ of stochastic grid processes $Y^h:=\{(Y_{0}^h, \ldots, Y_{n_h}^h): Y_j^h \in L^{q}(\Omega, \mathscr F_j^h, \mathbb{P}; \mathbb{R}^d), \, j\in \{0,\ldots, n_h\}\}$ where $L^{q}(\Omega, \mathscr F_j^h, \mathbb{P}; \mathbb{R}^d)$ is space of $\mathscr F_j^h$-measurable, $\mathbb{R}^d$-valued  random variables having finite $q$-th moment,  and the norms $\|\cdot\|_{G^h_q}$ and $\|\cdot\|_{G^h_{q,S}}$ are given by 
\begin{align}
	\big\|Y^h \big\|_{G^h_q} := \Big\| \max_{j\in \{0,\ldots, n_h \}} |Y_j^h| \Big\|_{L^q(\Omega)} \text{ and } \big\|Y^h \big\|_{G^h_{q,S}}  := \big\| Y_0^h \big\|_{L^q(\Omega)}+ \Big\|\max_{j\in \{1,\ldots, n_h \}} |\sum_{k=1}^jY_k^h| \Big\|_{L^q(\Omega)}, \label{eq:norm:spiker}
\end{align}
respectively. 
Following Equations  \eqref{eq:euler} and \eqref{eq:scheme},  define the increment functions $\Xi^h:=(\Xi^h_1,  \ldots,  \Xi^h_{n_h})$ on the grid $\varrho_h$, 
\begin{align}
	\Upsilon^h_j(y, u, \CMjmath) &:= y+ b(t_{j-1}, y,  \CMjmath)   h_j u + \sum_{\ell=1}^{\tilde{d}} \sigma_{\ell}(t_{j-1}, y,  \CMjmath) \int_{t_{j-1}}^{t_{j-1}+ h_j u} dB_{\ell} (s) \label{eq:Uep} 
	\\
	\Xi^h_j(y, u, \imath,\CMjmath, \bar{\imath}) &:= b(t_{j-1}+h_j u, \Upsilon^h_j(y, u, \CMjmath),  \imath)   h_j +  \sum_{\ell=1}^{\tilde{d}} \sigma_{\ell}(t_{j-1}, y, \CMjmath)\{B_{\ell}(t_{j})-B_{\ell}(t_{j-1})\} \notag
	\\
	& 
	\quad+ \sum_{\ell_{1},\ell_{2}=1}^{\tilde{d}}\int_{t_{j-1}}^{t_{j}} \int_{t_{j-1}}^{s} \mathcal{D}_x \sigma_{\ell_1}(t_{j-1}, y,  \CMjmath) \sigma_{\ell_2} (t_{j-1}, y, \CMjmath)  dB_{\ell_{2}}(v)dB_{\ell_{1}}(s) \notag
	\\
	&
	\quad + \sum_{\ell=1}^{\tilde{d}} \mathbbm{1}_{\big\{N_{t_{j-1}}^{t_j}=1\big\}} \big\{ \sigma_{\ell}(t_{j-1},y, \bar{\imath})- \sigma_{\ell}(t_{j-1},y,  \CMjmath) \big\} \big\{ B_{\ell}(t_{j})-B_{\ell}(\uptau_{1}^{j-1})\big\} \label{eq:incrment}
\end{align}  
almost surely for any $y\in \mathbb{R}^d$,  $j \in\{1,\ldots, n_h\}$, $u \in (0,1)$,  $j_0, k_0 \in S$. 
For a grid process $Y^h \in G^h$,   let us define the residual grid process $\mathcal{R}[Y^h]:= \big(\mathcal{R}_0[Y^h], \mathcal{R}_1[Y^h], \ldots, \mathcal{R}_{n_h}[Y^h]\big)\in G^h_{q, S}$ as, 
\begin{align}
	\mathcal{R}_0[Y^h]:= Y_0^h - X_0^h, \quad \mathcal{R}_j[Y^h] := Y_j^h- Y_{j-1}^h - \Xi_j^h(Y_{j-1}^h , u_j,   r_{j-1}^u,   r_{j-1}, r_j) \label{eq:residual}
\end{align}
for $j \in \{1, \ldots, n_h\}$. 

To prove bi-stability of the randomized Milstein scheme \eqref{eq:scheme}, we also require the following lemma. 
\begin{lemma} \label{lem:E}
	Let Assumptions \ref{as:drift} and \ref{as:diffusion} hold. 
	Then, $Y^h, Z^h \in G_q^h$, $q \geq 2$ satisfies the following 
	\begin{align}
		\Big\|  \max_{n\in\{1, \ldots, k\}} &  \big|\sum_{j=1}^n \big[\Xi^h_j(Y_{j-1}^h,  u_j,r_{j-1}^u, r_{j-1}, r_j)-\Xi^h_j(Z_{j-1}^h, u_j,r_{j-1}^u, r_{j-1}, r_j)\big]\big| \Big\|_{L^q(\Omega)} \notag 
		\\
		&  \leq   K   \Big( \sum_{j=1}^k   h_j \Big\|\max_{i\in \{0, \ldots, j-1\}} |Y_{i}^h-Z_{i}^h|  \Big\|_{L^{q}(\Omega)}^2   \Big)^{1/2} \notag
	\end{align}
	for any $k \in \{1, \ldots, n_h\}$. 
	This further implies that 
	\begin{align*}
		\Big\|  \max_{n\in\{1, \ldots, n_h\}} &  \big|\sum_{j=1}^n \big[\Xi^h_j(Y_{j-1}^h,  u_j, r_{j-1}^u, r_{j-1}, r_j)-\Xi^h_j(Z_{j-1}^h, u_j, r_{j-1}^u, r_{j-1}, r_j)\big]\big| \Big\|_{L^q(\Omega)} \notag  \leq K \| Y^h - Z^h \|_{G^h_q}. 
	\end{align*}
\end{lemma}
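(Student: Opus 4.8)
The plan is to subtract the two increment functions and split the difference $\Xi^h_j(Y_{j-1}^h,u_j,r_{j-1}^u,r_{j-1},r_j)-\Xi^h_j(Z_{j-1}^h,u_j,r_{j-1}^u,r_{j-1},r_j)$ according to the four lines of \eqref{eq:incrment} into: (i) the drift term $\big[b(t_{j-1}^u,\Upsilon^h_j(Y_{j-1}^h,u_j,r_{j-1}),r_{j-1}^u)-b(t_{j-1}^u,\Upsilon^h_j(Z_{j-1}^h,u_j,r_{j-1}),r_{j-1}^u)\big]h_j$; (ii) the Euler diffusion term $\sum_{\ell=1}^{\tilde{d}}\big[\sigma_\ell(t_{j-1},Y_{j-1}^h,r_{j-1})-\sigma_\ell(t_{j-1},Z_{j-1}^h,r_{j-1})\big]\{B_\ell(t_j)-B_\ell(t_{j-1})\}$; (iii) the Milstein double-integral term built from $\mathcal{D}_x\sigma_{\ell_1}\sigma_{\ell_2}$; and (iv) the switching term carrying $\mathbbm{1}_{\{N_{t_{j-1}}^{t_j}=1\}}$. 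By the triangle inequality it then suffices to bound, for each of these four summands separately, the $L^q(\Omega)$-norm of $\max_{n\le k}$ of its partial sums by $K\big(\sum_{j=1}^k h_j\big\|\max_{i\le j-1}|Y_i^h-Z_i^h|\big\|_{L^q(\Omega)}^2\big)^{1/2}$ and then add the four bounds.

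Term (i) is of finite variation, so $\max_{n\le k}\big|\sum_{j=1}^n(\cdot)\big|\le\sum_{j=1}^k|(\cdot)|$; after taking $L^q(\Omega)$-norms and using Minkowski's inequality, it remains to estimate $h_j\big\|b(t_{j-1}^u,\Upsilon^h_j(Y_{j-1}^h,u_j,r_{j-1}),r_{j-1}^u)-b(t_{j-1}^u,\Upsilon^h_j(Z_{j-1}^h,u_j,r_{j-1}),r_{j-1}^u)\big\|_{L^q(\Omega)}$. By Assumption \ref{as:drift} (Lipschitz continuity in the spatial variable) this is $\le Ch_j\big\|\Upsilon^h_j(Y_{j-1}^h,u_j,r_{j-1})-\Upsilon^h_j(Z_{j-1}^h,u_j,r_{j-1})\big\|_{L^q(\Omega)}$; expanding $\Upsilon^h_j$ via \eqref{eq:Uep}, using Assumptions \ref{as:drift} and \ref{as:diffusion}, the independence of $u_j$ from $B$ and from $\mathscr F_{j-1}^h$ (so that, conditioning on $u_j$, the expectation of $|\sigma_\ell(t_{j-1},Y_{j-1}^h,r_{j-1})-\sigma_\ell(t_{j-1},Z_{j-1}^h,r_{j-1})|^q\,|B_\ell(t_{j-1}^u)-B_\ell(t_{j-1})|^q$ factorises), and $h_j\le h\le1$, one gets $\big\|\Upsilon^h_j(Y_{j-1}^h,u_j,r_{j-1})-\Upsilon^h_j(Z_{j-1}^h,u_j,r_{j-1})\big\|_{L^q(\Omega)}\le K\big\|Y_{j-1}^h-Z_{j-1}^h\big\|_{L^q(\Omega)}$. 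Summing over $j$ and applying the Cauchy--Schwarz inequality $\sum_{j=1}^k h_j a_j\le T^{1/2}\big(\sum_{j=1}^k h_j a_j^2\big)^{1/2}$ with $a_j=\big\|Y_{j-1}^h-Z_{j-1}^h\big\|_{L^q(\Omega)}\le\big\|\max_{i\le j-1}|Y_i^h-Z_i^h|\big\|_{L^q(\Omega)}$ puts (i) into the desired form.

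For the three remaining terms I use martingale arguments. The partial sums of (ii) and (iii) are discrete-time martingales with respect to $\{\mathscr F_n^h\}$ (the coefficients are $\mathscr F_{n-1}^h$-measurable and the It\^o increments are centered and independent of $\mathscr F_{n-1}^h$), while, conditionally on $\mathcal{F}_T^r$, the partial sums of (iv) are a discrete-time martingale with respect to $\{\tilde{\mathcal{F}}_{\uptau_1^n\wedge t_{n+1}}\vee\tilde{\mathcal{F}}_T^r\}_n$ (augmented to carry the randomizing variables $u$), precisely the structure already exhibited in the proof of Lemma \ref{moment}. To each I would apply Doob's maximal inequality followed by the Burkholder--Davis--Gundy inequality (for term (iii), after first rewriting the partial sums as a single It\^o integral in $dB_{\ell_1}$ with piecewise-defined integrand) to dominate the maximal partial sum by the $L^q(\Omega)$-norm of the square root of the bracket process, and then, since $q\ge2$, Minkowski's inequality in $L^{q/2}$ to push the $L^q(\Omega)$-norm inside the sum over $j$. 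The moment bookkeeping is finished by the independence of the It\^o increments from the coefficients (which are $\mathscr F_{j-1}^h$-measurable for (ii) and (iii), and, given $\mathcal{F}_T^r$, independent of the increment over $(\uptau_1^{j-1}\wedge t_j,t_j]$ for (iv)) together with $\mathbb{E}\big[|B_\ell(t_j)-B_\ell(t_{j-1})|^q\big]\le C_q h_j^{q/2}$, $\mathbb{E}\big[\big(\int_{t_{j-1}}^{t_j}|B_\ell(s)-B_\ell(t_{j-1})|^2\,ds\big)^{q/2}\big]\le C_q h_j^{q}$, and $\mathbb{E}\big[|B_\ell(t_j)-B_\ell(\uptau_1^{j-1}\wedge t_j)|^2\,\big|\,\tilde{\mathcal{F}}_{\uptau_1^{j-1}\wedge t_j}\vee\tilde{\mathcal{F}}_T^r\big]\le h_j$. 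Finally, the Lipschitz estimates of Assumption \ref{as:diffusion} and Remark \ref{rem:D} (in particular that $\mathcal{D}_x\sigma_{\ell_1}\sigma_{\ell_2}$ is Lipschitz in the spatial variable), the elementary bound $|\sigma_\ell(t_{j-1},Y_{j-1}^h,r_j)-\sigma_\ell(t_{j-1},Y_{j-1}^h,r_{j-1})-\sigma_\ell(t_{j-1},Z_{j-1}^h,r_j)+\sigma_\ell(t_{j-1},Z_{j-1}^h,r_{j-1})|\le 2C|Y_{j-1}^h-Z_{j-1}^h|$ for (iv), and the $\tilde{\mathcal{F}}_T^r$-measurability of $\mathbbm{1}_{\{N_{t_{j-1}}^{t_j}=1\}}$, convert each of these estimates into $K\big(\sum_{j=1}^k h_j\big\|\max_{i\le j-1}|Y_i^h-Z_i^h|\big\|_{L^q(\Omega)}^2\big)^{1/2}$, the extra factors $h^{1/2}\le1$ that appear for (iii) and (iv) being harmless; for (iv) the conditioning on $\mathcal{F}_T^r$ is removed at the very end by one more application of Minkowski's inequality in $L^{q/2}$ over $\mathcal{F}_T^r$.

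Summing the four estimates gives the first assertion. The second follows at once by bounding $\big\|\max_{i\le j-1}|Y_i^h-Z_i^h|\big\|_{L^q(\Omega)}\le\|Y^h-Z^h\|_{G^h_q}$ for every $j$ and using $\sum_{j=1}^{n_h}h_j=T$, so that $K$ merely acquires an extra factor $T^{1/2}$. I expect the main obstacle to be term (iv): the random switching time $\uptau_1^{j-1}$ enters as an endpoint of a stochastic integral, so the mixing of the discontinuous chain $r$ with the continuous Brownian filtration has to be handled with care; this is exactly what is bypassed by conditioning on $\mathcal{F}_T^r$ and invoking the martingale property from the proof of Lemma \ref{moment}, after which term (iv) is controlled by the same routine as (ii) and (iii).
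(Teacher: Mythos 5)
Your proposal is correct and follows essentially the same route as the paper: the same decomposition of the increment difference into a finite-variation drift piece (handled by Minkowski, the Lipschitz property of $b$, the expansion of $\Upsilon^h_j$, and Cauchy--Schwarz) and martingale pieces for the Euler diffusion, the iterated integral, and the switching term, the last with respect to the filtration $\{\tilde{\mathscr{F}}_{\uptau_{1}^{n}\wedge t_{n+1}}^{B}\vee\tilde{\mathscr{F}}_{T}^{r}\otimes\tilde{\mathscr{F}}_{n}^{u}\}_{n}$, each controlled by the maximal/BDG inequalities, Minkowski in $L^{q/2}$, and the Lipschitz estimates of Assumption \ref{as:diffusion} and Remark \ref{rem:D}. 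The only cosmetic difference is that the paper splits the switching term into two martingales ($\mathcal{T}^{(4)}$ and $\mathcal{T}^{(5)}$, one for each regime argument) whereas you bound the combined difference directly, which yields the same estimate.
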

\begin{proof}
	From Equation \eqref{eq:incrment}, 
	\begin{align}
		\Xi^h_j(Y_{j-1}^h,  u_j,& r_{j-1}^u, r_{j-1}, r_j)-\Xi^h_j(Z_{j-1}^h, u_j, r_{j-1}^u, r_{j-1}, r_j) \notag
		\\
		=& \big[b( t_{j-1}^u, \Upsilon^h_j(Y_{j-1}^h, u_j, r_{j-1}),  r_{j-1}^u) -b( t_{j-1}^u, \Upsilon^h_j(Z_{j-1}^h, u_j, r_{j-1}),  r_{j-1}^u)  \big] h_j  \notag
		\\
		& +  \sum_{\ell=1}^{\tilde{d}} \big[\sigma_{\ell}(t_{j-1}, Y_{j-1}^h, r_{j-1})-\sigma_{\ell}(t_{j-1}, Z_{j-1}^h, r_{j-1})\big](B_{\ell}(t_{j})-B_{\ell}(t_{j-1})) \notag
		\\
		& 
		+ \sum_{\ell_{1},\ell_{2}=1}^{\tilde{d}}\int_{t_{j-1}}^{t_{j}} \int_{t_{j-1}}^{s} \big[ \mathcal{D}_x \sigma_{\ell_1}(t_{j-1}, Y_{j-1}^h, r_{j-1}) \sigma_{\ell_2}(t_{j-1}, Y_{j-1}^h, r_{j-1}) \notag
		\\
		&\qquad - \mathcal{D}_x \sigma_{\ell_1}(t_{j-1}, Z_{j-1}^h, r_{j-1}) \sigma_{\ell_2}(t_{j-1}, Z_{j-1}^h, r_{j-1})\big]dB_{\ell_{2}}(v)dB_{\ell_{1}}(s) \notag
		\\
		&
		+ \sum_{\ell=1}^{\tilde{d}} \mathbbm{1}_{\big\{N_{t_{j-1}}^{t_j}=1\big\}} \big\{ \sigma_{\ell}(t_{j-1},Y_{j-1}^h, r_j)-\sigma_{\ell}(t_{j-1},Z_{j-1}^h, r_j) \big\} \big\{ B_{\ell}(t_{j})-B_{\ell}(\uptau_{1}^{j-1}) \big\} \notag
		\\
		&
		+ \sum_{\ell=1}^{\tilde{d}} \mathbbm{1}_{\big\{N_{t_{j-1}}^{t_j}=1\big\}} \big\{  \sigma_{\ell}(t_{j-1},Y_{j-1}^h,  r_{j-1})-\sigma_{\ell}(t_{j-1},Z_{j-1}^h,  r_{j-1}) \big\} \big\{ B_{\ell}(t_{j})-B_{\ell}(\uptau_{1}^{j-1})\big\} \notag
		\\
		= : \, & \mathcal{T}_j^{(1)} + \mathcal{T}_j^{(2)} + \mathcal{T}_j^{(3)} + \mathcal{T}_j^{(4)} + \mathcal{T}_j^{(5)} \label{eq:T1+T5}
	\end{align}  
	for any $j \in \{1, \ldots, n_h\}$ which further implies on using Minkowski inequality, 
	\begin{align}
		\Big\|  &\max_{n\in\{1, \ldots, k\}}   \big|\sum_{j=1}^n \big[\Xi^h_j(Y_{j-1}^h,  u_j, r_{j-1}^u, r_{j-1}, r_j)-\Xi^h_j(Z_{j-1}^h, u_j, r_{j-1}^u, r_{j-1}, r_j)\big]\big| \Big\|_{L^q(\Omega)}  \notag
		\\
		&  \leq \Big\| \max_{n\in\{1, \ldots, k\}} \big| \sum_{j=1}^n \mathcal{T}^{(1)}_j \big| \Big\|_{L^q(\Omega)} +  \Big\| \max_{n\in\{1, \ldots, k\}} \big| \sum_{j=1}^n \mathcal{T}^{(2)}_j \big| \Big\|_{L^q(\Omega)} + \Big\| \max_{n\in\{1, \ldots, k\}} \big| \sum_{j=1}^n \mathcal{T}^{(3)}_j \big| \Big\|_{L^q(\Omega)}\notag
		\\
		&+ \Big\| \max_{n\in\{1, \ldots, k\}} \big| \sum_{j=1}^n \mathcal{T}^{(4)}_j \big| \Big\|_{L^q(\Omega)}+ \Big\| \max_{n\in\{1, \ldots, k\}} \big| \sum_{j=1}^n \mathcal{T}^{(5)}_j \big| \Big\|_{L^q(\Omega)} \label{eq:sum:T1+T5}
	\end{align}
	for $k \in \{1, \ldots, n_h\}$. 
	We now estimate each term on the right side of the above Equation as follows. 
	By Minkowski inequality and Assumption \ref{as:drift},  
	\begin{align*}
		\Big\|& \max_{n\in\{1, \ldots, k\}} \big| \sum_{j=1}^n \mathcal{T}^{(1)}_j \big| \Big\|_{L^q(\Omega)} 
		\\
		& : = \Big\| \max_{n\in\{1, \ldots, k\}} \big| \sum_{j=1}^n  \big[b( t_{j-1}^u, \Upsilon^h_j(Y_{j-1}^h, u_j, r_{j-1}),  r_{j-1}^u) -b( t_{j-1}^u, \Upsilon^h_j(Z_{j-1}^h, u_j, r_{j-1}),  r_{j-1}^u)  \big] h_j  \big| \Big\|_{L^q(\Omega)}  \notag
		\\
		& \leq    \sum_{j=1}^k   h_j \big\|b( t_{j-1}^u, \Upsilon^h_j(Y_{j-1}^h, u_j, r_{j-1}),  r_{j-1}^u) -b( t_{j-1}^u, \Upsilon^h_j(Z_{j-1}^h, u_j, r_{j-1}),  r_{j-1}^u)   \big \|_{L^q(\Omega)}  \notag
		\\
		& \leq  K  \sum_{j=1}^k   h_j \big\| \Upsilon^h_j(Y_{j-1}^h, u_j, r_{j-1})-\Upsilon^h_j(Z_{j-1}^h, u_j, r_{j-1}) \big \|_{L^q(\Omega)}  \notag
	\end{align*}
	which on using Equation \eqref{eq:Uep} and Assumption \ref{as:drift} yields the following, 
	\begin{align*}
		\Big\| & \max_{n\in\{1, \ldots, k\}}  \big| \sum_{j=1}^n \mathcal{T}^{(1)}_j \big| \Big\|_{L^q(\Omega)} \leq  K  \sum_{j=1}^k   h_j  \| Y_{j-1}^h- Z_{j-1}^h\|_{L^q(\Omega)} 
		\\
		& \quad + K  \sum_{j=1}^k  h_j \big\| b(t_{j-1}, Y_{j-1}^h,  r_{j-1}) - b(t_{j-1}, Z_{j-1}^h,  r_{j-1})  \big\|_{L^q(\Omega)}   h_j 
		\\
		& \quad + K  \sum_{j=1}^k  h_j \Big\| \sum_{\ell=1}^{\tilde{d}} \big[\sigma_{\ell}(t_{j-1}, Y_{j-1}^h,  r_{j-1})-  \sigma_{\ell}(t_{j-1}, Z_{j-1}^h,  r_{j-1}) \big] \int_{t_{j-1}}^{ t_{j-1}^u}  dB_{\ell} (s) \Big\|_{L^q(\Omega)}
		\\
		& \leq K  \sum_{j=1}^k  h_j \big(1+ h_j+ h_j^{1/2} \big) \big\| Y_{j-1}^h- Z_{j-1}^h \big\|_{L^q(\Omega)} \leq K  \sum_{j=1}^k  h_j \big(1+ h+ h^{1/2} \big) \Big\| \sup_{i \in \{0, \ldots, j-1\}}|Y_{i}^h- Z_{i}^h| \Big\|_{L^q(\Omega)}
	\end{align*}
	and then squaring both the sides,  
	\begin{align}
		\Big\|  \max_{n\in\{1, \ldots, k\}}   \big| \sum_{j=1}^n \mathcal{T}^{(1)}_j \big| \Big\|_{L^q(\Omega)}^2 \leq  K  \sum_{j=1}^k  h_j  \Big\| \sup_{i \in \{0, \ldots, j-1\}}|Y_{i}^h- Z_{i}^h| \Big\|_{L^q(\Omega)}^2 \label{eq:T1}
	\end{align}
	for any $k \in \{1, \ldots, n_h\}$ where $h \leq 1$ is used. 
	From Equation \eqref{eq:T1+T5},  notice that  $\displaystyle \Big\{\sum_{j=1}^n \mathcal{T}^{(2)}_j \Big\}_{n \in \{1, \ldots, n_h\}}$ is a discrete-time martingale with respect to the filtration $\{\mathscr F_n^h\}_{n \in \{1,\ldots, n_h\}}$ and thus using martingale's inequality,    one has  
	\begin{align*}
		\Big\| & \max_{n\in\{1, \ldots, k\}} \big| \sum_{j=1}^n \mathcal{T}^{(2)}_j \big| \Big\|_{L^q(\Omega)} 
		\\ 
		& :=  \Big\| \max_{n\in\{1, \ldots, k\}} \big| \sum_{j=1}^n  \sum_{\ell=1}^{\tilde{d}} \big[\sigma_{\ell}(t_{j-1}, Y_{j-1}^h, r_{j-1})-\sigma_{\ell}(t_{j-1}, Z_{j-1}^h, r_{j-1})\big](B_{\ell}(t_{j})-B_{\ell}(t_{j-1}))  \big| \Big\|_{L^q(\Omega)}
		\\
		& \leq K  \Big\|   \sum_{j=1}^k \big| \sum_{\ell=1}^{\tilde{d}} \big[\sigma_{\ell}(t_{j-1}, Y_{j-1}^h, r_{j-1})-\sigma_{\ell}(t_{j-1}, Z_{j-1}^h, r_{j-1})\big](B_{\ell}(t_{j})-B_{\ell}(t_{j-1})) \big|^2    \Big\|_{L^{q/2}(\Omega)}^{1/2} 
	\end{align*} 
	which on squaring both sides and using Minkowski's inequality and Assumption \ref{as:diffusion} yield, 
	\begin{align}
		& \Big\| \max_{n\in\{1, \ldots, k\}} \big| \sum_{j=1}^n \mathcal{T}^{(2)}_j \big| \Big\|_{L^q(\Omega)}^2  \notag
		\\
		& \ \leq   K     \sum_{j=1}^k \Big\| \big| \sum_{\ell=1}^{\tilde{d}} \big[\sigma_{\ell}(t_{j-1}, Y_{j-1}^h, r_{j-1})-\sigma_{\ell}(t_{j-1}, Z_{j-1}^h, r_{j-1})\big](B_{\ell}(t_{j})-B_{\ell}(t_{j-1})) \big|^2    \Big\|_{L^{q/2}(\Omega)} \notag
		\\
		& \ \leq   K    \sum_{j=1}^k   \sum_{\ell=1}^{\tilde{d}} \Big\| \big|Y_{j-1}^h-Z_{j-1}^h   \big| \big|B_{\ell}(t_{j})-B_{\ell}(t_{j-1})\big|    \Big\|_{L^{q}(\Omega)}^2  \notag
		\\
		&  \leq   K    \sum_{j=1}^k   \sum_{\ell=1}^{\tilde{d}} \big\| Y_{j-1}^h-Z_{j-1}^h  \big\|_{L^{q}(\Omega)}^2   \big\| B_{\ell}(t_{j})-B_{\ell}(t_{j-1})   \big\|_{L^{q}(\Omega)}^2  \leq   K    \sum_{j=1}^k    h_j \Big\|\max_{i\in \{0, \ldots, j-1\}} |Y_{i}^h-Z_{i}^h|  \Big\|_{L^{q}(\Omega)}^2   \label{eq:T2}
	\end{align}
	for any $k \in \{1, \ldots, n_h\}$. 
	As before,  $\displaystyle \Big\{\sum_{j=1}^n \mathcal{T}^{(3)}_j \Big\}_{n \in \{1, \ldots, n_h\}}$ is an $\{\mathscr F_n^h \}_{n \in \{1, \ldots, n_h\}}$-martingale and thus,   
	\begin{align*}
		\Big\| \max_{n\in\{1, \ldots, k\}} \hspace{-0.5mm}  \big| \hspace{-0.5mm} \sum_{j=1}^n \mathcal{T}^{(3)}_j \big| & \Big\|_{L^q(\Omega)} 
		\hspace{-0.8mm}:= \hspace{-0.5mm} \Big\|  \max_{n\in\{1, \ldots, k\}} \hspace{-0.5mm} \big| \sum_{j=1}^n \sum_{\ell_{1},\ell_{2}=1}^{\tilde{d}}\int_{t_{j-1}}^{t_{j}} \hspace{-0.5mm} \int_{t_{j-1}}^{s} \hspace{-0.8mm} \big[ \mathcal{D}_x \sigma_{\ell_1}(t_{j-1}, Y_{j-1}^h, r_{j-1}) \sigma_{\ell_2}(t_{j-1}, Y_{j-1}^h, r_{j-1}) \notag
		\\
		& \qquad - \mathcal{D}_x \sigma_{\ell_1}(t_{j-1}, Z_{j-1}^h, r_{j-1}) \sigma_{\ell_2}(t_{j-1}, Z_{j-1}^h, r_{j-1})\big]dB_{\ell_{2}}(v)dB_{\ell_{1}}(s) \big| \Big\|_{L^q(\Omega)} \notag
		\\
		& \leq K\Big\|  \big(  \sum_{j=1}^k  \big| \sum_{\ell_{1},\ell_{2}=1}^{\tilde{d}}\int_{t_{j-1}}^{t_{j}} \int_{t_{j-1}}^{s} \big[ \mathcal{D}_x \sigma_{\ell_1}(t_{j-1}, Y_{j-1}^h, r_{j-1}) \sigma_{\ell_2}(t_{j-1}, Y_{j-1}^h, r_{j-1}) \notag
		\\
		& \qquad - \mathcal{D}_x \sigma_{\ell_1}(t_{j-1}, Z_{j-1}^h, r_{j-1}) \sigma_{\ell_2}(t_{j-1}, Z_{j-1}^h, r_{j-1})\big]dB_{\ell_{2}}(v)dB_{\ell_{1}}(s)\big|^2 \big)  \Big\|^{1/2}_{L^{q/2}(\Omega)} \notag
	\end{align*}
	which taking squaring on both sides and using Minkowski's inequality gives,
	\begin{align}
		\Big\|  \max_{n\in\{1, \ldots, k\}} \big| \sum_{j=1}^n \mathcal{T}^{(3)}_j \big| \Big\|_{L^q(\Omega)}^2  & \leq K\sum_{j=1}^k \Big\|  \big| \sum_{\ell_{1},\ell_{2}=1}^{\tilde{d}}\int_{t_{j-1}}^{t_{j}} \int_{t_{j-1}}^{s} \big[ \mathcal{D}_x \sigma_{\ell_1}(t_{j-1}, Y_{j-1}^h, r_{j-1}) \sigma_{\ell_2}(t_{j-1}, Y_{j-1}^h, r_{j-1}) \notag
		\\
		& \quad - \mathcal{D}_x \sigma_{\ell_1}(t_{j-1}, Z_{j-1}^h, r_{j-1}) \sigma_{\ell_2}(t_{j-1}, Z_{j-1}^h, r_{j-1})\big]dB_{\ell_{2}}(v)dB_{\ell_{1}}(s)\big|^2  \Big\|_{L^{q/2}(\Omega)} \notag
		\\
		& \leq  K \sum_{j=1}^k \Big\|  \sum_{\ell_{1},\ell_{2}=1}^{\tilde{d}}\int_{t_{j-1}}^{t_{j}} \int_{t_{j-1}}^{s} \big| \mathcal{D}_x \sigma_{\ell_1}(t_{j-1}, Y_{j-1}^h, r_{j-1}) \sigma_{\ell_2}(t_{j-1}, Y_{j-1}^h, r_{j-1}) \notag
		\\
		& \quad - \mathcal{D}_x \sigma_{\ell_1}(t_{j-1}, Z_{j-1}^h, r_{j-1}) \sigma_{\ell_2}(t_{j-1}, Z_{j-1}^h, r_{j-1})\big|^2dv ds \Big\|_{L^{q/2}(\Omega)} \notag
	\end{align}
	and further application of Remark \ref{rem:D} yields,
	\begin{align}
		\Big\|  \max_{n\in\{1, \ldots, k\}}  \big| \sum_{j=1}^n \mathcal{T}^{(3)}_j \big| \Big\|_{L^q(\Omega)}^2  \hspace{-0.8mm} \leq K  \sum_{j=1}^k \Big\|  h_j^2 \big|Y_{j-1}^h - Z_{j-1}^h\big|^2\Big\|_{L^{q/2}(\Omega)} \hspace{-0.8mm} \leq   K    \sum_{j=1}^k    h_j^2 \Big\|\max_{i\in \{0, \ldots, j-1\}} |Y_{i}^h-Z_{i}^h|  \Big\|_{L^{q}(\Omega)}^2   \label{eq:T3}	
	\end{align}
	for any $k \in \{1, \ldots, n_h\}$. 
	Moreover,  $\Big\{\displaystyle \sum_{j=1}^n \mathcal{T}^{(4)}_j\Big\}_{n\in \{1, \ldots, n_h\}}$ is a martingale with respect to the filtration $\{\tilde{\mathscr{F}}_{\uptau_{1}^{n} \wedge t_{n+1}}^B \vee \tilde{\mathscr{F}}_{T}^r \otimes \tilde{\mathscr{F}}_{n}^u\}_{n \in \{1, \ldots, n_h\}}$.  
	Thus, 
	\begin{align*}
		\Big\|  & \max_{n\in\{1, \ldots, k\}}  \big| \sum_{j=1}^n \mathcal{T}^{(4)}_j \big| \Big\|_{L^q(\Omega)}
		\\
		& =: \Big\|  \max_{n\in\{1, \ldots, k\}}   \big| \sum_{j=1}^n  \sum_{\ell=1}^{\tilde{d}} \mathbbm{1}_{\big\{N_{t_{j-1}}^{t_j}=1\big\}} \big\{ \sigma_{\ell}(t_{j-1},Y_{j-1}^h, r_j)-\sigma_{\ell}(t_{j-1},Z_{j-1}^h, r_j) \big\} \big\{ B_{\ell}(t_{j})-B_{\ell}(\uptau_{1}^{j-1})\big\} \big| \Big\|_{L^q(\Omega)} 
		\\
		& \leq K  \Big\|   \sum_{j=1}^k  \big|  \sum_{\ell=1}^{\tilde{d}} \mathbbm{1}_{\big\{N_{t_{j-1}}^{t_j}=1\big\}} \big\{ \sigma_{\ell}(t_{j-1},Y_{j-1}^h, r_j)-\sigma_{\ell}(t_{j-1},Z_{j-1}^h, r_j) \big\} \big\{ B_{\ell}(t_{j})-B_{\ell}(\uptau_{1}^{j-1}) \big\} \big|^2  \Big\|_{L^{q/2}(\Omega)}^{1/2} 
	\end{align*}
	which due to Minkowski's inequality and Assumption \ref{as:diffusion} yields, 
	\begin{align}
		\Big\|  & \max_{n\in\{1, \ldots, k\}}  \big| \sum_{j=1}^n \mathcal{T}^{(4)}_j \big| \Big\|_{L^q(\Omega)}^2   \notag
		\\
		& \leq  K  \sum_{j=1}^k   \Big\| \big|  \sum_{\ell=1}^{\tilde{d}} \mathbbm{1}_{\big\{N_{t_{j-1}}^{t_j}=1\big\}} \big\{ \sigma_{\ell}(t_{j-1},Y_{j-1}^h, r_j)-\sigma_{\ell}(t_{j-1},Z_{j-1}^h, r_j) \big\} \big\{ B_{\ell}(t_{j})-B_{\ell}(\uptau_{1}^{j-1})\big\} \big|^2  \Big\|_{L^{q/2}(\Omega)}  \notag
		\\
		& \leq   K \sum_{j=1}^k      \sum_{\ell=1}^{\tilde{d}} \Big\| \big| Y_{j-1}^h-Z_{j-1}^h\big| \big| B_{\ell}(t_{j})-B_{\ell}(\uptau_{1}^{j-1})\big|  \Big\|_{L^{q}(\Omega)}^2   \leq   K    \sum_{j=1}^k   h_j \Big\|\max_{i\in \{0, \ldots, j-1\}} |Y_{i}^h-Z_{i}^h|  \Big\|_{L^{q}(\Omega)}^2   \label{eq:T4}
	\end{align}
	for any $k \in \{1, \ldots, n_h\}$. 
	By applying similar arguments,  one arrive at the following estimate, 
	\begin{align}
		& \Big\| \max_{n\in\{1, \ldots, k\}}  \big| \sum_{j=1}^n \mathcal{T}^{(5)}_j \big| \Big\|_{L^q(\Omega)}^2    \notag
		\\
		& : = \Big\|   \hspace{-0.5mm} \max_{n\in\{1, \ldots, k\}}  \hspace{-0.5mm}   \big|  \hspace{-0.5mm} \sum_{j=1}^n  \hspace{-0.5mm}  \sum_{\ell=1}^{\tilde{d}}  \hspace{-0.5mm} \mathbbm{1}_{\big\{N_{t_{j-1}}^{t_j}=1\big\}} \hspace{-0.5mm}  \big\{  \sigma_{\ell}(t_{j-1},Y_{j-1}^h,  r_{j-1})-\sigma_{\ell}(t_{j-1},Z_{j-1}^h,  r_{j-1}) \big\}  \hspace{-0.5mm} \big\{ B_{\ell}(t_{j})-B_{\ell}(\uptau_{1}^{j-1})\big\}\big| \Big\|_{L^q(\Omega)}^2   \notag
		\\
		&  \leq   K    \sum_{j=1}^k   h_j \Big\|\max_{i\in \{0, \ldots, j-1\}} |Y_{i}^h-Z_{i}^h|  \Big\|_{L^{q}(\Omega)}^2   \label{eq:T5}
	\end{align}
	for any $k \in \{1, \ldots, n_h\}$. 
	
	On combining the estimates from Equations \eqref{eq:T1} to \eqref{eq:T5} in Equation \eqref{eq:sum:T1+T5},  we obtain
	\begin{align}
		\Big\|  \max_{n\in\{1, \ldots, k\}}   \big|\sum_{j=1}^n \big[\Xi^h_j(Y_{j-1}^h,  u_j, r_{j-1}^u,& r_{j-1}, r_j)- \Xi^h_j(Z_{j-1}^h, u_j, r_{j-1}^u, r_{j-1}, r_j)\big]\big| \Big\|_{L^q(\Omega)}^2  \notag 
		\\
		&  \leq   K    \sum_{j=1}^k   h_j \Big\|\max_{i\in \{0, \ldots, j-1\}} |Y_{i}^h-Z_{i}^h|  \Big\|_{L^{q}(\Omega)}^2   \notag
	\end{align}
	for any $k \in \{1, \ldots, n_h\}$.  
	This proves the first part which implies the second part of the lemma. 
\end{proof}
Notice that from Equations \eqref{eq:scheme},  \eqref{eq:Uep} and  \eqref{eq:incrment},  one can write, 
\begin{align}
	X_n^h- X_{0}^h - \sum_{j=1}^n \Xi_j^h(X_{j-1}^h , u_j, r_{j-1}^u, r_{j-1}, r_j)=0 \label{eq:resX}
\end{align}
almost surely for $n\in \{1, \ldots, n_h\}$. 
Clearly, $X^h \in G_q^h$.
For any grid process $Y^h \in G_q^h$,  
\begin{align}
	\sum_{j=1}^n  \mathcal{R}_j[Y^h]= Y_n^h- Y_{0}^h - \sum_{j=1}^n \Xi_j^h(Y_{j-1}^h , u_j, r_{j-1}^u,  r_{j-1}, r_j)  \label{eq:resY}
\end{align}
almost surely for $n\in \{1, \ldots, n_h\}$. 
Also, on using Equation \eqref{eq:resX} in Equation \eqref{eq:resY},  
\begin{align}
	\sum_{j=1}^n  \mathcal{R}_j[Y^h] = (Y_n^h- X_n^h) - (Y_{0}^h-X_{0}^h) - \sum_{j=1}^n \big[\Xi_j^h(Y_{j-1}^h , u_j, r_{j-1}^u, r_{j-1}, r_j)-\Xi_j^h(X_{j-1}^h , u_j, r_{j-1}^u, r_{j-1}, r_j)\big] \label{eq:res:sum}
\end{align}
almost surely for any $n \in \{1, \ldots, n_h\}$. 

The following theorem gives the bi-stability of the scheme \eqref{eq:scheme}. Here, we recall the norms $\| \cdot \|_{G^h_{q}}$ and $\| \cdot \|_{G^h_{q,S}}$ from Equation \eqref{eq:norm:spiker} for clarity. 
\begin{theorem}\label{thm:bi}
	Let Assumptions \ref{as:initial}, \ref{as:drift} and \ref{as:diffusion} hold. 
	Then, for any $ Y^h \in G_q^h$ with residual  $\mathcal{R}[Y^h] \in G^h_{q,S}$, $q \geq 2$ there exist positive constatnts $C_1$ and $C_2$ such that
	\begin{align*}
		C_1 \big\|\mathcal{R}[Y^h] \big\|_{G^h_{q,S}}\leq  \big\| Y^h-X^h \big\|_{G_q^h}   \leq C_2  \big\|\mathcal{R}[Y^h] \big\|_{G^h_{q,S}}.
	\end{align*}
\end{theorem}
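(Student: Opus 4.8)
The plan is to prove the two inequalities separately, both built on the identity \eqref{eq:res:sum} together with the perturbation estimate of Lemma \ref{lem:E}; the upper bound is the one that requires real work, via a discrete Gr\"onwall argument.

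For the lower bound I would start from \eqref{eq:res:sum},
\begin{align*}
	\sum_{j=1}^n \mathcal{R}_j[Y^h] = (Y_n^h - X_n^h) - (Y_0^h - X_0^h) - \sum_{j=1}^n\big[\Xi_j^h(Y_{j-1}^h, u_j, r_{j-1}^u, r_{j-1}, r_j) - \Xi_j^h(X_{j-1}^h, u_j, r_{j-1}^u, r_{j-1}, r_j)\big],
\end{align*}
take the maximum over $n \in \{1, \ldots, n_h\}$, then the $L^q(\Omega)$-norm, and apply the triangle inequality together with $\mathcal{R}_0[Y^h] = Y_0^h - X_0^h$ and the second conclusion of Lemma \ref{lem:E}. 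This yields
\begin{align*}
	\Big\| \max_{n\in\{1,\ldots,n_h\}} \big|\sum_{j=1}^n \mathcal{R}_j[Y^h]\big|\Big\|_{L^q(\Omega)} \leq \|Y^h - X^h\|_{G_q^h} + \|\mathcal{R}_0[Y^h]\|_{L^q(\Omega)} + K\|Y^h - X^h\|_{G_q^h}.
\end{align*}
Since $\|\mathcal{R}_0[Y^h]\|_{L^q(\Omega)} = \|Y_0^h - X_0^h\|_{L^q(\Omega)} \leq \|Y^h - X^h\|_{G_q^h}$, adding $\|\mathcal{R}_0[Y^h]\|_{L^q(\Omega)}$ to both sides and recalling the definition of $\|\cdot\|_{G_{q,S}^h}$ in \eqref{eq:norm:spiker} gives $\|\mathcal{R}[Y^h]\|_{G_{q,S}^h} \leq (3 + K)\|Y^h - X^h\|_{G_q^h}$, i.e.\ the lower bound with $C_1 = (3+K)^{-1}$.

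For the upper bound I would instead rearrange \eqref{eq:res:sum} as
\begin{align*}
	Y_n^h - X_n^h = \mathcal{R}_0[Y^h] + \sum_{j=1}^n \mathcal{R}_j[Y^h] + \sum_{j=1}^n\big[\Xi_j^h(Y_{j-1}^h, u_j, r_{j-1}^u, r_{j-1}, r_j) - \Xi_j^h(X_{j-1}^h, u_j, r_{j-1}^u, r_{j-1}, r_j)\big],
\end{align*}
valid for $n \in \{0, \ldots, n_h\}$ with empty sums read as zero. Fix $m \in \{1, \ldots, n_h\}$, take the maximum over $n \in \{0, \ldots, m\}$ and then the $L^q(\Omega)$-norm. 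The first two terms on the right contribute at most $\|\mathcal{R}[Y^h]\|_{G_{q,S}^h}$ (the second being dominated by the running-maximum term of $\|\cdot\|_{G_{q,S}^h}$ over all of $\{1,\ldots,n_h\}$), while to the third term I would apply the \emph{first}, local-in-$m$ estimate of Lemma \ref{lem:E} with $Z^h = X^h$ and $k = m$. Writing $\phi(m) := \big\| \max_{n\in\{0,\ldots,m\}} |Y_n^h - X_n^h|\big\|_{L^q(\Omega)}$, this produces
\begin{align*}
	\phi(m) \leq \|\mathcal{R}[Y^h]\|_{G_{q,S}^h} + K\Big(\sum_{j=1}^m h_j\, \phi(j-1)^2\Big)^{1/2}.
\end{align*}
Squaring and using $(a+b)^2 \leq 2a^2 + 2b^2$ gives $\phi(m)^2 \leq 2\|\mathcal{R}[Y^h]\|_{G_{q,S}^h}^2 + 2K^2 \sum_{j=1}^m h_j\, \phi(j-1)^2$, and the discrete Gr\"onwall inequality (using $\sum_{j=1}^{n_h} h_j = T$) then yields $\phi(n_h)^2 \leq 2 e^{2K^2 T}\|\mathcal{R}[Y^h]\|_{G_{q,S}^h}^2$. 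Taking square roots gives $\|Y^h - X^h\|_{G_q^h} = \phi(n_h) \leq C_2 \|\mathcal{R}[Y^h]\|_{G_{q,S}^h}$ with $C_2 = \sqrt{2}\, e^{K^2 T}$.

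The only genuinely delicate point is ensuring that the increment-difference term $\sum_{j\le n}[\Xi_j^h(Y^h) - \Xi_j^h(X^h)]$ can be bounded \emph{by the running maxima $\phi(j-1)$ against the weights $h_j$}, so that Gr\"onwall applies; this is precisely the content of the first conclusion of Lemma \ref{lem:E}, where the martingale/BDG and Minkowski estimates convert the Brownian- and switching-increment terms $\mathcal{T}_j^{(1)}, \ldots, \mathcal{T}_j^{(5)}$ into $\sum_j h_j \|\max_{i\le j-1}|Y_i^h - X_i^h|\|_{L^q(\Omega)}^2$. Everything else is routine, and no moment bound on $Y^h$ beyond $Y^h \in G_q^h$ is needed since all estimates are linear in $Y^h - X^h$.
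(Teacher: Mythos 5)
Your proposal is correct and follows essentially the same route as the paper: both directions rest on the identity \eqref{eq:res:sum}, the lower bound uses the global (second) conclusion of Lemma \ref{lem:E} with the triangle inequality, and the upper bound uses the local (first) estimate of Lemma \ref{lem:E} followed by squaring and the discrete Gr\"onwall inequality. Your write-up of the Gr\"onwall step is in fact slightly cleaner than the paper's, which contains a typographical slip ($Z_i^h$ in place of $X_i^h$) at that point.
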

\begin{proof}
	On using  Equations \eqref{eq:norm:spiker},  \eqref{eq:residual} and \eqref{eq:res:sum},     
	\begin{align*}
		\big\|\mathcal{R} & [Y^h] \big\|_{G^h_{q,S}}  = \big\| \mathcal{R}_0 [Y^h] \big\|_{L^q(\Omega)}+ \Big\|\max_{n\in \{1,\ldots, n_h \}} |\sum_{j=1}^n \mathcal{R}_j [Y^h]| \Big\|_{L^q(\Omega)}
		\\
		& \leq  2 \big\| Y^h_0-X_0^h \big\|_{L^q(\Omega)} + \Big\|  \max_{n\in \{1,\ldots, n_h \}} |Y_n^h- X_n^h|  \Big\|_{L^q(\Omega)}
		\\
		&\quad + \Big\| \max_{n\in \{1,\ldots, n_h \}} \Big| \sum_{j=1}^n \big[\Xi_j^h(Y_{j-1}^h , u_j,r_{j-1}^u, r_{j-1}, r_j)-\Xi_j^h(X_{j-1}^h , u_j, r_{j-1}^u, r_{j-1}, r_j)\big] \Big| \Big\|_{L^q(\Omega)}
		\\
		& \leq K \big\| Y^h-X^h \big \|_{G_q^h}  + \Big\| \max_{n\in \{1,\ldots, n_h \}} \Big| \sum_{j=1}^n \big[\Xi_j^h(Y_{j-1}^h , u_j, r_{j-1}^u, r_{j-1}, r_j)-\Xi_j^h(X_{j-1}^h , u_j, r_{j-1}^u, r_{j-1}, r_j)\big] \Big| \Big\|_{L^q(\Omega)}
	\end{align*}
	which on using Lemma \ref{lem:E} yields,
	\begin{align}
		\label{eq:bileft}
		C_1 \big\|\mathcal{R} [Y^h] \big\|_{G^h_{q,S}}  \leq  \big\| Y^h-X^h \big\|_{G_q^h}
	\end{align} 
	Again, re-arranging Equation \eqref{eq:res:sum} as, 
	\begin{align*}
		Y_n^h- X_n^h =  Y_{0}^h-X_{0}^h  + \sum_{j=1}^n  \mathcal{R}_j[Y^h]+  \sum_{j=1}^n \big[\Xi_j^h(Y_{j-1}^h , u_j, r_{j-1}^u, r_{j-1}, r_j)-\Xi_j^h(X_{j-1}^h , u_j, r_{j-1}^u, r_{j-1}, r_j)\big]
	\end{align*}
	for any $n \in \{1, \ldots, n_h\}$ and then Minkowski's inequality and Lemma \ref{lem:E} yields, 
	\begin{align*}
		\Big\| & \max_{n \in \{0, \ldots, k\}}|Y_n^h- X_n^h | \Big\|_{L^q (\Omega)}^2  \leq K  \big\| \mathcal{R}_0[Y^h] \big\|_{L^q(\Omega)}^2  +  K  \Big \| \max_{n \in \{1,   \ldots, k\}}\big|\sum_{j=1}^n  \mathcal{R}_j[Y^h]\big| \Big\|_{L^q(\Omega)}^2 
		\\
		& \qquad + K \Big \| \max_{n \in \{1, \ldots, k\}} \big| \sum_{j=1}^n \big[\Xi_j^h(Y_{j-1}^h , u_j, r_{j-1}^u, r_{j-1}, r_j)-\Xi_j^h(X_{j-1}^h , u_j, r_{j-1}^u, r_{j-1}, r_j)\big] \big| \Big\|_{L^q(\Omega)}^2
		\\
		& \leq K   \big\|\mathcal{R}[Y^h] \big\|_{G^h_{q,S}}^2+  K    \sum_{j=1}^k   h_j \Big\|\max_{i\in \{0, \ldots, j-1\}} |Y_{i}^h-Z_{i}^h|  \Big\|_{L^{q}(\Omega)}^2   \notag
	\end{align*}
	for any $k \in \{1, \ldots, n_h\}$.  
	Further, due to Gr\"onwall's inequality,  one has, 
	\begin{align}
		\label{eq:biright}
		\Big\|  \max_{n \in \{0, \ldots, n_h\}}|Y_n^h- X_n^h | \Big\|_{L^q (\Omega)}^2 \leq K  \big\|\mathcal{R}[Y^h] \big\|_{G^h_{q,S}}^2 \exp\big(K   \sum_{j=1}^{n_h} h_j\big) \leq C_2  \big\|\mathcal{R}[Y^h] \big\|_{G^h_{q,S}}.
	\end{align}
	The proof is complete when Equation \eqref{eq:bileft} is combined with Equation \eqref{eq:biright}.
\end{proof}
In order to establish the consistency of the scheme \eqref{eq:scheme} and hence to recover its rate of strong convergence,  we require some lemmas as given below. 
The following lemma is shown in \cite[Lemma 4.1]{nguyen2017milstein},  see also  \cite[Lemma 5]{kumar2021note}. 
\begin{lemma}
	\label{lem:jump}
	Define  $\mathfrak{q} := \max\{-\mathfrak{q}_{k_{0}k_{0}}: k_{0} \in S\}$ and let $N_{s}^{t}$ be the number of switchings of the Markov chain $r$ in the interval $(s,t]$ for $0 \leq s < t \leq T$. Then, 
	$$	
	\mathbb{P} (N_{s}^{t} \geq N) \leq \mathfrak{q}^{N}(t-s)^{N},  \, N \in \mathbb{N}. 
	$$
	If $0<(t-s) < 1/(2\mathfrak{q})$ holds, then there is a constant $K>0$ independent of $t-s$ such that  
	$$
	\mathbb{E}[N_{s}^{t}] \leq K (t-s) \mbox{ and } \mathbb{E}[(N_{s}^{t})^2] \leq K.  
	$$
\end{lemma}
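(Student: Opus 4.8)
The plan is to establish the tail bound in the first part by induction on $N$, using the strong Markov property of $r$ at its switching times, and then to read off the two moment estimates in the second part by summing the resulting geometric series.

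For the induction I would prove the slightly stronger conditional statement that, for every $0 \le u < v \le T$ and every $N \in \mathbb{N}$, $\mathbb{P}(N_u^{v} \ge N \mid r(u)) \le \mathfrak{q}^{N}(v-u)^{N}$ almost surely; the lemma then follows on taking $u=s$, $v=t$ and then expectations. In the base case $N=1$, conditioning on $r(u)=k_0$ the residual holding time of $r$ in the state $k_0$ after time $u$ is exponentially distributed with parameter $-\mathfrak{q}_{k_0 k_0} \le \mathfrak{q}$, so
\begin{align*}
	\mathbb{P}(N_u^{v} \ge 1 \mid r(u)=k_0) = 1 - e^{\mathfrak{q}_{k_0 k_0}(v-u)} \le 1 - e^{-\mathfrak{q}(v-u)} \le \mathfrak{q}(v-u),
\end{align*}
using $1-e^{-x}\le x$ for $x\ge 0$; taking the maximum over $k_0\in S$ gives the $N=1$ bound. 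For the inductive step, let $\uptau$ be the first switching time of $r$ strictly after $u$. Since $\{N_u^{v}\ge N\}=\{\uptau \le v,\ N_{\uptau}^{v}\ge N-1\}$ and $\{\uptau \le v\}\in\tilde{\mathscr F}^r_{\uptau}$, the tower property together with the strong Markov property at $\uptau$ gives
\begin{align*}
	\mathbb{P}(N_u^{v}\ge N \mid r(u)) = \mathbb{E}\big[\mathbbm{1}_{\{\uptau\le v\}}\,\mathbb{P}\big(N_{\uptau}^{v}\ge N-1 \,\big|\, \tilde{\mathscr F}^r_{\uptau}\big)\,\big|\, r(u)\big] \le \mathfrak{q}^{N-1}(v-u)^{N-1}\,\mathbb{P}(\uptau\le v \mid r(u)),
\end{align*}
where I used that, conditionally on $\tilde{\mathscr F}^r_{\uptau}$, the post-$\uptau$ chain is a fresh Markov chain started from $r(\uptau)$, so the induction hypothesis applies with starting time $0$ and horizon $v-\uptau\le v-u$. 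Finally $\mathbb{P}(\uptau\le v \mid r(u)) = \mathbb{P}(N_u^{v}\ge 1 \mid r(u)) \le \mathfrak{q}(v-u)$ by the base case, and the induction closes.

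For the second part I would use the layer-cake identities $\mathbb{E}[N_s^t]=\sum_{N\ge 1}\mathbb{P}(N_s^t\ge N)$ and $\mathbb{E}[(N_s^t)^2]=\sum_{N\ge 1}(2N-1)\,\mathbb{P}(N_s^t\ge N)$, valid since $N_s^t$ is nonnegative and integer-valued. Writing $\rho:=\mathfrak{q}(t-s)$, the hypothesis $t-s<1/(2\mathfrak{q})$ gives $\rho<1/2$, so substituting $\mathbb{P}(N_s^t\ge N)\le\rho^{N}$ yields $\mathbb{E}[N_s^t]\le \rho/(1-\rho)\le 2\rho = 2\mathfrak{q}(t-s)$ and $\mathbb{E}[(N_s^t)^2]\le 2\sum_{N\ge 1}N\rho^{N}=2\rho/(1-\rho)^2\le 4$; both right-hand sides involve constants independent of $t-s$, which is exactly the claim (with $K=2\mathfrak{q}$ and $K=4$ respectively).

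The only delicate point is the inductive step: one has to invoke the strong Markov property at the stopping time $\uptau$ carefully and check that the induction hypothesis, stated for deterministic starting times, transfers to the random time $\uptau$ via the regular conditional law of the post-$\uptau$ chain. The remaining ingredients — the exponential holding-time estimate and the summation of the geometric and arithmetico-geometric series — are routine.
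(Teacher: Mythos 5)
The paper does not prove this lemma itself; it simply cites \cite[Lemma 4.1]{nguyen2017milstein}, so there is no in-paper argument to compare against. Your proof is a correct, self-contained derivation by the standard route (induction on $N$ via the strong Markov property at the first switching time, plus the exponential holding-time bound $1-e^{-\mathfrak{q}(v-u)}\le \mathfrak{q}(v-u)$, and then the layer-cake/geometric-series summation for the moments), and the one delicate point you flag — transferring the induction hypothesis to the random time $\uptau$ through the regular conditional law of the post-$\uptau$ chain — is handled correctly since the bound is uniform in the initial state and $v-\uptau\le v-u$ on $\{\uptau\le v\}$.
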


The proof of the following lemma is similar to \cite[Lemma 2.2]{nguyen2017milstein}.
However,  we additionally include an $L^2(\tilde{\Omega})$-estimate.
\begin{lemma}\label{lem:Ito}
	Let  $\uptau_1 < \uptau_2 < \ldots < \uptau_{N_s^t} $ be the switching times  of the Markov chain $r$ in the interval $(s,t]$ for any $0 \leq s < t \leq T$ where $t$ may or may not be the switching time of the chain.
	Define $\uptau_0 := s$ and $ \uptau_{ N_s^t +1} := t$. 
	For every $i_0 \in S$, if  $g:[0,T] \times \mathbb{R}^d \times \{i_0\} \to \mathbb{R}^d$ is a Borel measurable function,  then
	\begin{align*}
		g(t,X(t),r(t)) - & g(s,X(s),r(s)) =  \sum_{i_0 \neq j_0} \int_s^t [g(v,X(v),j_0) - g(v,X(v),i_0)]dM_{i_0 j_0}(v)\\ \nonumber
		& + \sum_{j_0 \in S} \int_s^t q_{r(v-)j_0}[g(v,X(v),j_0) - g(v,X(v),r(v-))]dv\\ \nonumber
		& + \sum_{k=0}^{N_s^t} [g(\uptau_{k+1},X(\uptau_{k+1}),r(\uptau_k)) - g(\uptau_k,X(\uptau_k),r(\uptau_k))]
	\end{align*}
	almost surely for any $0 \leq s < t \leq T$.
	In addition,  if $g$ is ${1/2}-$H\"older continuous in time and  Lipschitz continuous in state,   \textit{i.e.},  there exists a constant $C>0$ such that 
	\begin{align*}
		\big|g(t, x, j_0)- g(s, \bar x, j_0) \big| \leq C \{(1+|x|)|t-s|^{1/2}+ |x-\bar{x}|\}
	\end{align*} 
	for all $t, s \in [0,T]$, $x, \bar{x}\in \mathbb{R}^d$ and $j_0 \in S$,  then there is a constant $K>0$, independent of $s$ and $t$ such that
	\begin{align*}
		\big\|g(t, X(t), r(t))- g(s, X(s), r(s)) \big\|_{L^2(\tilde{\Omega})} \leq K |t-s|^{1/2}.
	\end{align*}
\end{lemma}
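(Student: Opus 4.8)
The statement has two independent parts: the pathwise change‑of‑variables identity, which is essentially \cite[Lemma 2.2]{nguyen2017milstein} and is purely combinatorial, and the new $L^2(\tilde\Omega)$ bound, which I would obtain directly from the H\"older/Lipschitz hypothesis on $g$ together with the moment estimates of Theorem \ref{th:eu} and the tail bound of Lemma \ref{lem:jump}, without invoking the identity.

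For the identity, the plan is to first rewrite the two ``martingale plus compensator'' terms as a single Stieltjes integral against the jump‑counting processes. Using $M_{i_0 j_0}=[M_{i_0 j_0}]-\langle M_{i_0 j_0}\rangle$ together with $d\langle M_{i_0 j_0}\rangle(v)=\mathfrak{q}_{i_0 j_0}\mathbbm{1}_{\{r(v-)=i_0\}}\,dv$, the sum $\sum_{i_0\neq j_0}\int_s^t[g(v,X(v),j_0)-g(v,X(v),i_0)]\,dM_{i_0 j_0}(v)$ plus $\sum_{j_0\in S}\int_s^t \mathfrak{q}_{r(v-)j_0}[g(v,X(v),j_0)-g(v,X(v),r(v-))]\,dv$ collapses (the $j_0=r(v-)$ term being zero) to $\sum_{i_0\neq j_0}\int_s^t[g(v,X(v),j_0)-g(v,X(v),i_0)]\,d[M_{i_0 j_0}](v)$, which by the very definition of $[M_{i_0 j_0}]$ equals $\sum_{k=1}^{N_s^t}[g(\uptau_k,X(\uptau_k),r(\uptau_k))-g(\uptau_k,X(\uptau_k),r(\uptau_{k-1}))]$, i.e.\ the contribution of the switchings, since $r(\uptau_k-)=r(\uptau_{k-1})$. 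Adding the telescoping sum $\sum_{k=0}^{N_s^t}[g(\uptau_{k+1},X(\uptau_{k+1}),r(\uptau_k))-g(\uptau_k,X(\uptau_k),r(\uptau_k))]$ and using that $r$ is constant, equal to $r(\uptau_k)$, on $[\uptau_k,\uptau_{k+1})$, every intermediate value cancels and one is left with $g(t,X(t),r(t))-g(s,X(s),r(s))$. The only care needed is the bookkeeping of the left limits $r(\uptau_k-)=r(\uptau_{k-1})$ and the boundary conventions $\uptau_0=s$, $\uptau_{N_s^t+1}=t$ (which also cover the case that $t$ is itself a switching time); no regularity of $g$ beyond Borel measurability is used, apart from the local integrability of $v\mapsto g(v,X(v),j_0)$ needed for the $d\langle M\rangle$‑integral, which holds under the growth condition by Theorem \ref{th:eu}.

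For the $L^2$ estimate I would split $g(t,X(t),r(t))-g(s,X(s),r(s))$ by the triangle inequality into $[g(t,X(t),r(t))-g(t,X(s),r(t))]+[g(t,X(s),r(t))-g(s,X(s),r(t))]+[g(s,X(s),r(t))-g(s,X(s),r(s))]$. The first difference is bounded pathwise by $C|X(t)-X(s)|$ (Lipschitz in state, uniformly in the regime variable), whose $L^2$ norm is $\leq K|t-s|^{1/2}$ after conditioning on $\mathcal{F}_T^r$ and using the increment bound of Theorem \ref{th:eu}. The second difference is bounded by $C(1+|X(s)|)|t-s|^{1/2}$ (H\"older in time), and $\|1+|X(s)|\|_{L^2(\tilde\Omega)}\leq K$ again by Theorem \ref{th:eu}. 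The third difference vanishes on $\{r(s)=r(t)\}$, and on its complement, which is contained in $\{N_s^t\geq 1\}$, it is bounded by $2\sup_{j_0\in S}|g(s,X(s),j_0)|\leq K(1+|X(s)|)$ --- here I use that Lipschitz continuity in state plus $1/2$‑H\"older continuity in time force a linear growth bound $|g(s,x,j_0)|\leq K(1+|x|)$ uniform in $s$ and $j_0$, since $g(\cdot,0,\cdot)$ is then bounded on $[0,T]\times S$. As $N_s^t$ is $\mathcal{F}_T^r$‑measurable, conditioning on $\mathcal{F}_T^r$ decouples the moment bound from the switching event, and Lemma \ref{lem:jump} gives $\mathbb{P}(N_s^t\geq 1)\leq \mathfrak{q}(t-s)$; hence the $L^2$ norm of the third difference is also $\leq K(t-s)^{1/2}$. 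Collecting the three estimates yields the claim with a constant independent of $s$ and $t$.

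I expect the main obstacle to be precisely this third difference in the $L^2$ estimate: one must notice that it is supported on $\{N_s^t\geq 1\}$ and then deal with the fact that $X(s)$ and $N_s^t$ are \emph{not} independent --- only conditionally independent given $\mathcal{F}_T^r$ --- so that conditioning on $\mathcal{F}_T^r$ is the key device that lets one combine the moment bound of Theorem \ref{th:eu} with the tail estimate of Lemma \ref{lem:jump}. The identity itself is routine, with the left‑limit and boundary bookkeeping being the only thing to carry out carefully.
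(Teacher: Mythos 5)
Your proof is correct. For the pathwise identity you run the paper's computation in reverse — the paper starts from the telescoping sum $\sum_{k=0}^{N_s^t}[g(\uptau_{k+1},X(\uptau_{k+1}),r(\uptau_{k+1}))-g(\uptau_k,X(\uptau_k),r(\uptau_k))]$, splits off the regime--jump contributions, rewrites them as $\sum_{i_0\neq j_0}\int_s^t[\cdot]\,d[M_{i_0j_0}](v)$ and then decomposes $[M]=M+\langle M\rangle$ — so that part is essentially the same argument. The $L^2$ estimate, however, is where you genuinely diverge: the paper derives it \emph{from the identity}, applying the It\^o isometry for the purely discontinuous martingales $M_{i_0j_0}$ (via their quadratic variations), H\"older's inequality for the $dv$ term, and a Cauchy--Schwarz bound weighted by $(1+N_s^t)$ for the telescoping sum, then invoking Theorem \ref{th:eu} and Lemma \ref{lem:jump} to control $\tilde{\mathbb{E}}\big[[M_{i_0j_0}](t)-[M_{i_0j_0}](s)\big]$ and $\tilde{\mathbb{E}}[(1+N_s^t)(t-s)]$. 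Your route bypasses the identity entirely: a three-way triangle-inequality split into state, time and regime increments, with the regime increment supported on $\{N_s^t\geq 1\}$ and handled by conditioning on $\mathcal{F}_T^r$ together with $\tilde{\mathbb{P}}(N_s^t\geq 1)\leq \mathfrak{q}(t-s)$. This is more elementary (no martingale isometry needed) and your observation that $X(s)$ and $N_s^t$ are only conditionally independent given $\mathcal{F}_T^r$ — so that the conditional moment bound of Theorem \ref{th:eu} is exactly the right tool — is the same device the paper uses inside its own estimates. You also correctly note that the linear-growth bound $|g(s,x,j_0)|\leq K(1+|x|)$, needed to control the cross-regime difference, follows from the stated H\"older/Lipschitz hypothesis; the paper uses this bound implicitly without comment. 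Both arguments yield the same constant structure, so nothing is lost by your shortcut.
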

\begin{proof}
	One can clearly write the following
	\begin{align*} 
		g(t,X(t),r(t)) - & g(s,X(s),r(s)) =  \sum_{k=0}^{N_{s}^{t}} [g(\uptau_{k+1},X(\uptau_{k+1}),r(\uptau_{k+1})) - g(\uptau_k,X(\uptau_k),r(\uptau_k))] \notag
		\\
		= & \sum_{k=0}^{N_{s}^{t}} [g(\uptau_{k+1},X(\uptau_{k+1}),r(\uptau_{k+1})) - g(\uptau_{k+1},X(\uptau_{k+1}),r(\uptau_{k}))]  \notag
		\\
		& + \sum_{k=0}^{N_{s}^{t}} [g(\uptau_{k+1},X(\uptau_{k+1}),r(\uptau_{k})) - g(\uptau_k,X(\uptau_k),r(\uptau_k))]   \notag
	\end{align*}
	almost surely for any  $0 \leq s < t \leq T$. 
	Now, using $M_{i_0 j_0}(t) = [M_{i_0 j_0}](t) - \langle M_{i_0 j_0}\rangle (t) $ and   $r(\uptau_k) = r(u)$ for $u \in (\uptau_k,\uptau_{k+1})$, $k \in \{1,\ldots, N_s^t\}$,  one can write, 
	\begin{align*}
		&\sum_{k=0}^{N_{s}^{t}} [g(\uptau_{k+1}, X(\uptau_{k+1}),r(\uptau_{k+1})) - g(\uptau_{k+1},X(\uptau_{k+1}),r(\uptau_{k+1}-))] \\
		& = \sum_{i_0 \neq j_0} \int_s^t [g(v,X(v),j_0)-g(v,X(v),i_0)] d[M_{i_0 j_0}](v)
		\\
		& =  \sum_{i_0 \neq j_0} \int_s^t [g(v,X(v),j_0) - g(v,X(v),i_0)]dM_{i_0 j_0}(v) + \sum_{j_0 \in S} \int_s^t q_{r(v-)j_0}[g(v,X(v),j_0) - g(v,X(v),r(v-))]dv
	\end{align*}
	almost surely, which on substituting completes the first part of the  proof.
	For the second part, using Minkowski inequality,  H\"older's inequality and Theorem \ref{th:eu} and Lemma \ref{lem:jump}, 
	\begin{align*}
		\big\|g(t, &X(t),   r(t))   \hspace{-0.5mm} - g(s, X(s), r(s)) \big\|_{L^2(\tilde{\Omega})} \hspace{-0.5mm} \leq \hspace{-0.5mm}   K \sum_{i_0 \neq j_0}  \hspace{-0.5mm} \Big( \tilde{\mathbb{E}} \Big[ \int_s^t \big| g(v,X(v),j_0) - g(v,X(v),i_0)\big|^2  d[M_{i_0 j_0}](v)\Big] \Big)^{1/2}
		\\
		& \quad + K \Big( |t-s|\tilde{\mathbb{E}}  \Big[ \sum_{j_0 \in S} \int_s^t (q_{r(v-)j_0})^2\big|g(v,X(v),j_0) - g(v,X(v),r(v-))\big|^2 dv \Big]\Big)^{1/2}
		\\
		& \quad + K \Big( \tilde{\mathbb{E}} \Big[ (1+N_{s}^{t}) \sum_{k=0}^{N_{s}^{t}} \big|g(\uptau_{k+1},X(\uptau_{k+1}),r(\uptau_k)) - g(\uptau_k,X(\uptau_k),r(\uptau_k)) \big|^2 \Big]\Big)^{1/2}
		\\
		& \leq K \sum_{i_0 \neq j_0}  \Big( \tilde{\mathbb{E}} \Big[ \int_s^t (1+\tilde{\mathbb{E}}[|X(v)|^2|\mathcal{F}_T^r])  d[M_{i_0 j_0}](v) \Big]\Big)^{1/2}
		\\
		& \ + K \Big( |t-s| \tilde{\mathbb{E}}  \Big[ \sum_{j_0 \in S} \int_s^t (q_{r(v-)j_0})^2 (1+\tilde{\mathbb{E}}[|X(v)|^2|\mathcal{F}_T^r]) dv \Big]\Big)^{1/2}
		\\
		& \ +  K \Big( \tilde{\mathbb{E}} \Big[ (1+N_{s}^{t}) \sum_{k=0}^{N_{s}^{t}} \big\{ |\uptau_{k+1}-\uptau_{k}| (1+\tilde{\mathbb{E}}[|X(\uptau_k)|^2|\mathcal{F}_T^r])+ \tilde{\mathbb{E}}[|X(\uptau_{k+1}) - X(\uptau_k)|^2|\mathcal{F}_T^r]\big\} \Big]\Big)^{1/2}
		\\
		& \leq K \sum_{i_0 \neq j_0}  \Big( \tilde{\mathbb{E}} \big[ [M_{i_0 j_0}](t) - [M_{i_0j_0}](s) \big] \Big)^{1/2} + K|t-s|  + K \Big( \tilde{\mathbb{E}} \Big[ (1+N_{s}^{t}) \sum_{k=0}^{N_{s}^{t}} |\uptau_{k+1}-\uptau_{k}| \Big]\Big)^{1/2}
		\\
		& \leq K |t-s|^{1/2}
	\end{align*}
	for all $s, t \in [0, T]$. 
\end{proof}
The proof of the following lemma  on the randomized quadrature rule for stochastic processes can be found in \cite{Kruse2019}. 
For this,  let us recall the sequence of uniform random variables $u:=\{u_j\}_{j\in \mathbb{N}}$ from Equation \eqref{eq:uniform},  the temporal grid  $\varrho_h$ from Equation \eqref{eq:grid} and the notations from Equation~\eqref{eq:tju}. 
\begin{lemma} \label{lem:quadrature}
	Let $\tilde{p} \geq 2$ and $Y: [0,T]\times  \tilde{\Omega} \mapsto \mathbb{R}^{d}$ be a stochastic process such that 
	$
	\tilde{\mathbb{E}} \Big[ \hspace{-0.5mm}\displaystyle  \hspace{-0.5mm} \int_0^T |Y(t)|^{\tilde{p}} dt \Big] \hspace{-0.5mm} <~\infty.  
	$
	For every $n \in \{1, \ldots, n_h\}$,   the randomized Riemann sum approximation  of  $ \displaystyle \int_0^{t_n} Y(t) dt$,  defined~as 
	$$
	\mathcal{A}^n_{u}[Y]:= \displaystyle  \sum_{j=1}^n h_j Y(t_{j-1}^u) 
	$$
	almost surely satisfies the following; 
	\newline 
	\textnormal{(i)} $\mathcal{A}^n_{u}[Y] \in L^{\tilde{p}}(\Omega)$, 
	\newline 
	\textnormal{(ii)} $\tilde{\mathbb E}^u \big[ \mathcal{A}^n_{u}[Y] \big]= \displaystyle \int_0^{t_n} Y(s) ds \in L^{\tilde{p}} (\Omega)$,  and 
	\newline 
	\textnormal{(iii)}  $\displaystyle \Big\| \max_{n \in \{1, \ldots, n_h\}} \big| \mathcal{A}^n_{u}[Y]-\int_0^{t_n} Y(s) ds\big|\Big\|_{L^{\tilde{p}}(\Omega)} \leq 2C_{\tilde{p}} T^{\frac{{\tilde{p}}-2}{2\tilde{p}}} \Big(\tilde{\mathbb{E}} \Big[ \int_0^T |Y(s)|^{\tilde{p}} ds  \Big] \Big)^{1/\tilde{p}} h^{1/2}$ where $C_{\tilde{p}}$ is a constant only depending on $\tilde{p}$. 
	In addition,  if 
	\begin{align}
		\|Y\|_{\gamma}=\displaystyle \sup_{t \in [0,T]}\|Y(t)\|_{L^{\tilde{p}}(\Omega)} + \sup_{ s \neq t\in [0, T]} \frac{\|Y(t)- Y(s)\|_{L^{\tilde{p}}(\Omega)}}{|t-s|^{\gamma}}< \infty \notag
	\end{align}
	for some $\gamma \in (0,1)$,   then 
	\begin{align*}
		\Big\| \max_{n \in \{1, \ldots, n_h\}} \big| \mathcal{A}^n_{u}[Y]-\int_0^{t_n} Y(s) ds\big|\Big\|_{L^{\tilde{p}}(\Omega)} \leq C_{\tilde{p}} \sqrt{T}  \|Y\|_\gamma \,h^{\gamma+1/2}.  
	\end{align*}
\end{lemma}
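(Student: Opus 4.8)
The plan is to exploit the independence of the randomization sequence $u$ from $Y$ and the tower property of conditional expectation, reducing the problem to an $L^{\tilde p}$-bound for a sum of martingale differences (in the randomization filtration) followed by a discrete Burkholder–Davis–Gundy estimate. First I would set $\eta_j := h_j Y(t_{j-1}^u) - \int_{t_{j-1}}^{t_j} Y(s)\,ds$ and observe that, since $u_j$ is uniform on $(0,1)$ and independent of $Y$ and of $\{u_i\}_{i\neq j}$, one has $\tilde{\mathbb E}^u[\eta_j \mid \tilde{\mathscr F}^u_{j-1}] = 0$; indeed $\tilde{\mathbb E}^u[h_j Y(t_{j-1}^u)] = h_j \int_0^1 Y(t_{j-1}+h_j v)\,dv = \int_{t_{j-1}}^{t_j} Y(s)\,ds$ by the substitution $s = t_{j-1}+h_j v$. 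This simultaneously gives parts (i) and (ii): integrability of $\mathcal A^n_u[Y]$ follows from Jensen/Fubini applied to $\tilde{\mathbb E}[\int_0^T |Y(t)|^{\tilde p}\,dt]<\infty$, and (ii) is exactly the vanishing-mean computation just performed.

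For part (iii), the key step is to note that $\{\sum_{j=1}^n \eta_j\}_{n}$ is an $\mathbb R^d$-valued discrete-time martingale with respect to $\{\tilde{\mathscr F}^u_n\}_n$ (enlarged to contain $\tilde{\mathscr F}$). Applying the discrete BDG inequality in $L^{\tilde p}(\Omega)$ yields
\begin{align*}
	\Big\| \max_{n \in \{1, \ldots, n_h\}} \big| \textstyle\sum_{j=1}^n \eta_j \big|\Big\|_{L^{\tilde p}(\Omega)}
	\leq C_{\tilde p} \Big\| \big( \textstyle\sum_{j=1}^{n_h} |\eta_j|^2 \big)^{1/2} \Big\|_{L^{\tilde p}(\Omega)}
	\leq C_{\tilde p} \Big( \textstyle\sum_{j=1}^{n_h} \| \eta_j \|_{L^{\tilde p}(\Omega)}^2 \Big)^{1/2},
\end{align*}
where the last inequality is Minkowski in $L^{\tilde p/2}$ applied to $\sum_j |\eta_j|^2$. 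It then remains to bound each $\|\eta_j\|_{L^{\tilde p}(\Omega)}$. Writing $\eta_j = \int_{t_{j-1}}^{t_j}\big(Y(t_{j-1}^u)-Y(s)\big)\,ds$ and using Minkowski's integral inequality gives $\|\eta_j\|_{L^{\tilde p}(\Omega)} \leq \int_{t_{j-1}}^{t_j} \|Y(t_{j-1}^u)-Y(s)\|_{L^{\tilde p}(\Omega)}\,ds$. For the unconditional estimate one bounds this crudely by $2 h_j \sup_{t\in[0,t_j]}\|Y(t)\|_{L^{\tilde p}(\Omega)}$ after a Hölder step in the time variable of the form $h_j^{1/\tilde p}(\int_{t_{j-1}}^{t_j}\|\cdots\|^{\tilde p})^{1/\tilde p}$, and summing $\sum_j h_j \le T$ together with $h_j\le h$ produces the factor $T^{(\tilde p-2)/(2\tilde p)}(\tilde{\mathbb E}[\int_0^T|Y|^{\tilde p}])^{1/\tilde p} h^{1/2}$; under the $\|Y\|_\gamma<\infty$ hypothesis one instead uses $\|Y(t_{j-1}^u)-Y(s)\|_{L^{\tilde p}(\Omega)} \le \|Y\|_\gamma |t_{j-1}^u - s|^{\gamma} \le \|Y\|_\gamma h_j^{\gamma}$, so $\|\eta_j\|_{L^{\tilde p}(\Omega)} \le \|Y\|_\gamma h_j^{\gamma+1}$, hence $\sum_j \|\eta_j\|^2 \le \|Y\|_\gamma^2 h^{2\gamma+1}\sum_j h_j \le \|Y\|_\gamma^2 T\, h^{2\gamma+1}$, giving the stated rate $h^{\gamma+1/2}$.

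The main obstacle is purely bookkeeping rather than conceptual: one must be careful that the BDG constant $C_{\tilde p}$ and the Hölder exponents are tracked consistently so that the final constants match the claimed $2C_{\tilde p}T^{(\tilde p-2)/(2\tilde p)}$ and $C_{\tilde p}\sqrt T$, and that the martingale property is stated with respect to a filtration (the product of $\tilde{\mathscr F}$ with the natural filtration of $u$) large enough to make $Y$ "known" at every step while keeping $u_j$ independent of the past. Since this lemma is quoted from \cite{Kruse2019}, I would in practice simply cite it; the sketch above indicates the self-contained argument.
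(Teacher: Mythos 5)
The paper gives no proof of this lemma at all --- it is imported verbatim from \cite{Kruse2019} --- and your sketch is exactly the argument of that reference: centre each local error $\eta_j$, use that $u_j$ remains uniform conditionally on $\tilde{\mathscr F}\otimes\tilde{\mathscr F}^u_{j-1}$ to obtain a discrete martingale, apply the discrete Burkholder--Davis--Gundy inequality followed by Minkowski in $L^{\tilde p/2}$, and then estimate each $\|\eta_j\|_{L^{\tilde p}(\Omega)}$ locally (yielding both parts (i)--(ii) and the two versions of (iii)). The only imprecision is the passing bound $2h_j\sup_{t}\|Y(t)\|_{L^{\tilde p}(\Omega)}$ in the unconditional case, which need not be finite under the sole hypothesis $\tilde{\mathbb E}\big[\int_0^T|Y(t)|^{\tilde p}\,dt\big]<\infty$; the H\"older-in-time estimate you also indicate, $\|\eta_j\|_{L^{\tilde p}(\Omega)}\le 2\,h_j^{1-1/\tilde p}\big(\int_{t_{j-1}}^{t_j}\tilde{\mathbb E}[|Y(s)|^{\tilde p}]\,ds\big)^{1/\tilde p}$, is the one that is actually needed and produces the stated constant $2C_{\tilde p}T^{(\tilde p-2)/(2\tilde p)}$ after a final H\"older step over $j$.
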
 

In what follows,  denote  the solution process $X$ of SDEwMS \eqref{eq:sdems}  restricted on the time-grid  $\varrho_h$ defined in Equation \eqref{eq:grid}  by 
\begin{align}
	\label{eq:Xrho}
	X^{\varrho_h}:= (X_{0}, \ldots, X_{n_h})
\end{align}
where  $X_j:=X(t_j) \in L^{p}(\Omega, \mathscr F_j^h, \mathbb{P}; \mathbb{R}^d)$ for all $j\in \{0,\ldots, n_h\}$ due to Theorem  \ref{th:eu}. Clearly, $X^{\varrho_h} \in G_q^h$.  

Due to Lemma \ref{lem:Ito}, $b$ satisfies the condition of Lemma \ref{lem:quadrature} with $\gamma=1/2$. Thus, the following corollary is a consequence of Lemmas \ref{lem:Ito} and \ref{lem:quadrature}.

\begin{corollary} 
	\label{cor:quadrature_r(j-1)}
	Let Assumptions \ref{as:initial} to \ref{as:diffusion} hold. Then, there is a constant $K>0$, independent of $h$, such that 
	\begin{align*}
		\Big\| \max_{n\in \{1,\ldots, n_h \}} \big|\sum_{j=1}^{n} \int_{t_{j-1}}^{t_j} [b(s,X(s),r(s))-b( t_{j-1}^u,X( t_{j-1}^u),r_{j-1}^u)]ds \big| \Big\|_{L^2(\Omega)} \leq K h. \notag
	\end{align*}
\end{corollary}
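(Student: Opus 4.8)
The plan is to recognize the sum inside the norm as exactly the error of the randomized Riemann quadrature rule of Lemma \ref{lem:quadrature} applied to the process $Y(t):=b(t,X(t),r(t))$, $t\in[0,T]$, and then to read off the bound from the H\"older-type estimate in that lemma with $\tilde{p}=2$ and $\gamma=1/2$.

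First I would rewrite the quantity in the claim. Since $r_{j-1}^u=r(t_{j-1}^u)$, for each $j$ one has $\int_{t_{j-1}}^{t_j} b(t_{j-1}^u,X(t_{j-1}^u),r_{j-1}^u)\,ds = h_j\,Y(t_{j-1}^u)$, while $\sum_{j=1}^n\int_{t_{j-1}}^{t_j} b(s,X(s),r(s))\,ds=\int_0^{t_n}Y(s)\,ds$. Hence the expression inside the $L^2(\Omega)$-norm equals $\int_0^{t_n}Y(s)\,ds-\mathcal{A}^n_u[Y]$ in the notation of Lemma \ref{lem:quadrature}, and the maximum over $n$ together with the $L^2(\Omega)$-norm are precisely those appearing there.

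Next I would verify the hypotheses of Lemma \ref{lem:quadrature} for this $Y$. The integrability $\tilde{\mathbb{E}}\big[\int_0^T|Y(t)|^2\,dt\big]<\infty$ and the uniform bound $\sup_{t\in[0,T]}\|Y(t)\|_{L^2(\Omega)}<\infty$ follow from the linear growth $|b(t,x,\imath)|\le K(1+|x|)$ of Remark \ref{rem:growth} combined with the moment estimate of Theorem \ref{th:eu}. For the H\"older seminorm, note that $Y$ does not depend on the randomization variables $\{u_j\}$, so $\|Y(t)-Y(s)\|_{L^2(\Omega)}=\|b(t,X(t),r(t))-b(s,X(s),r(s))\|_{L^2(\tilde{\Omega})}$; Assumption \ref{as:drift} states exactly that, for each fixed $\imath\in S$, $b(\cdot,\cdot,\imath)$ is $1/2$-H\"older in time with a linear-growth modulus and Lipschitz in state, which is the regularity demanded in the second part of Lemma \ref{lem:Ito}. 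Therefore $\|Y(t)-Y(s)\|_{L^2(\Omega)}\le K|t-s|^{1/2}$, and consequently $\|Y\|_{1/2}<\infty$.

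Finally, applying the last inequality of Lemma \ref{lem:quadrature} with $\gamma=1/2$ gives
\[
\Big\|\max_{n\in\{1,\ldots,n_h\}}\big|\mathcal{A}^n_u[Y]-\textstyle\int_0^{t_n}Y(s)\,ds\big|\Big\|_{L^2(\Omega)}\le C_{\tilde{p}}\sqrt{T}\,\|Y\|_{1/2}\,h^{1/2+1/2}=Kh,
\]
which is the assertion. The only real content here — and the reason the $L^2(\tilde{\Omega})$-estimate was appended to Lemma \ref{lem:Ito} — is establishing that $b$ evaluated along the pair $(X,r)$ has finite $1/2$-H\"older norm in $L^2$; once that ingredient is in hand, the corollary follows immediately and there is no genuine obstacle.
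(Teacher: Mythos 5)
Your proposal is correct and follows exactly the paper's route: the paper likewise observes that, by Lemma \ref{lem:Ito} and Assumption \ref{as:drift}, the process $Y(t)=b(t,X(t),r(t))$ satisfies the $1/2$-H\"older condition of Lemma \ref{lem:quadrature}, and then invokes that lemma with $\gamma=1/2$ to obtain the rate $h^{\gamma+1/2}=h$. No discrepancy to report.
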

\begin{lemma}
	\label{lem:J1}
	Let Assumptions \ref{as:initial} to \ref{as:diffusion} be satisfied. Then, there exists a constant $K>0$ such that
	\begin{align*}
		\Big\|  \max_{n\in \{1,\ldots, n_h \}} \big|  \sum_{j=1}^{n} \int_{t_{j-1}}^{t_j} \big[ b(s, X(s), r(s))-b( t_{j-1}^u, \Upsilon^h_j(X_{j-1}, u_j, r_{j-1}),  r_{j-1}^u) \big] ds\big|  \Big\|_{L^2(\Omega)} \leq K h.
	\end{align*}
\end{lemma}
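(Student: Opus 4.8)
\emph{Proof plan.} The idea is to insert the intermediate point $b(t_{j-1}^u, X(t_{j-1}^u), r_{j-1}^u)$ into the integrand and split the sum into two pieces via the triangle inequality. Writing
\[
b(s,X(s),r(s)) - b(t_{j-1}^u, \Upsilon^h_j(X_{j-1},u_j,r_{j-1}), r_{j-1}^u)
= \big[b(s,X(s),r(s)) - b(t_{j-1}^u, X(t_{j-1}^u), r_{j-1}^u)\big]
+ \big[b(t_{j-1}^u, X(t_{j-1}^u), r_{j-1}^u) - b(t_{j-1}^u, \Upsilon^h_j(X_{j-1},u_j,r_{j-1}), r_{j-1}^u)\big],
\]
the contribution of the first bracket, after the sum over $j$ and the integral over each $[t_{j-1},t_j]$, is precisely the quantity bounded by $Kh$ in Corollary \ref{cor:quadrature_r(j-1)}. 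Hence it remains only to estimate the second bracket, where both arguments of $b$ share the same time $t_{j-1}^u$ and the same chain state $r_{j-1}^u$, so only the spatial slot differs.

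For that second piece, I would first bound $\max_{n} \big|\sum_{j=1}^n a_j\big| \le \sum_{j=1}^{n_h} |a_j|$ pointwise, where $a_j = \int_{t_{j-1}}^{t_j}[b(t_{j-1}^u, X(t_{j-1}^u), r_{j-1}^u) - b(t_{j-1}^u, \Upsilon^h_j(X_{j-1},u_j,r_{j-1}), r_{j-1}^u)]\,ds = h_j\,[\,\cdots\,]$. By Assumption \ref{as:drift} (Lipschitz continuity in the spatial variable, with the time and chain-state arguments equal) one gets $\|a_j\|_{L^2(\Omega)} \le K h_j \big\| X(t_{j-1}^u) - \Upsilon^h_j(X_{j-1},u_j,r_{j-1}) \big\|_{L^2(\Omega)}$, so by Minkowski the whole second piece is at most $K\sum_{j=1}^{n_h} h_j \big\| X(t_{j-1}^u) - \Upsilon^h_j(X_{j-1},u_j,r_{j-1}) \big\|_{L^2(\Omega)}$. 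Thus it suffices to prove the one-step bound $\big\| X(t_{j-1}^u) - \Upsilon^h_j(X_{j-1},u_j,r_{j-1}) \big\|_{L^2(\Omega)} \le K h_j$, after which the second piece is $\le K\sum_{j=1}^{n_h} h_j^2 \le Kh\sum_{j=1}^{n_h} h_j \le KTh$, completing the proof.

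To establish the one-step estimate, expand $X(t_{j-1}^u)$ using \eqref{eq:sdems} and $\Upsilon^h_j$ using \eqref{eq:Uep}; since both start at $X_{j-1}=X(t_{j-1})$, their difference equals $\int_{t_{j-1}}^{t_{j-1}^u}[b(s,X(s),r(s)) - b(t_{j-1},X_{j-1},r_{j-1})]\,ds + \sum_{\ell=1}^{\tilde d}\int_{t_{j-1}}^{t_{j-1}^u}[\sigma_\ell(s,X(s),r(s)) - \sigma_\ell(t_{j-1},X_{j-1},r_{j-1})]\,dB_\ell(s)$. For the drift term, use $t_{j-1}^u \le t_j$, Minkowski, and the $L^2$-estimate in Lemma \ref{lem:Ito} (applicable to $b$, whose hypotheses are exactly Assumption \ref{as:drift}) to get the bound $K\int_{t_{j-1}}^{t_j}|s-t_{j-1}|^{1/2}\,ds \le K h_j^{3/2}$. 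For the stochastic term, I would condition on $u_j$ — which is independent of $B$ and $r$ — so that the upper limit $t_{j-1}^u = t_{j-1}+h_j u_j$ becomes deterministic; then the Itô isometry together with the $L^2$-estimate of Lemma \ref{lem:Ito} applied to each $\sigma_\ell$ (its Lipschitz-in-time and Lipschitz-in-state property from Assumption \ref{as:diffusion} implies the $1/2$-Hölder-in-time hypothesis of Lemma \ref{lem:Ito}, since $|t-s|\le\sqrt{T}\,|t-s|^{1/2}$ on $[0,T]$) gives the conditional bound $K\int_{t_{j-1}}^{t_j}|s-t_{j-1}|\,ds \le K h_j^2$; averaging over $u_j$ and using $h_j\le 1$ yields $\big\| X(t_{j-1}^u) - \Upsilon^h_j(X_{j-1},u_j,r_{j-1}) \big\|_{L^2(\Omega)} \le K h_j$.

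The step I expect to require the most care is the handling of the \emph{random} upper integration limit $t_{j-1}^u$ in the stochastic integral: this is exactly what the conditioning on $u_j$ is for, and one must check that it is compatible with the conditioning on $\mathcal F_T^r$ used inside Lemma \ref{lem:Ito} (which is fine, since $B$ remains a martingale under the filtration enlarged by the independent $\sigma$-fields of $r$ and $u_j$). Everything else reduces to the triangle inequality, Minkowski, Corollary \ref{cor:quadrature_r(j-1)}, and the two moment bounds already available from Theorem \ref{th:eu} and Lemma \ref{lem:Ito}.
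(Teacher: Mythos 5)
Your proposal is correct and follows essentially the same route as the paper: insert the pivot $b(t_{j-1}^u, X(t_{j-1}^u), r_{j-1}^u)$, dispose of the first piece by Corollary \ref{cor:quadrature_r(j-1)}, reduce the second piece via Assumption \ref{as:drift} and Minkowski to the one-step error $\|X(t_{j-1}^u)-\Upsilon^h_j(X_{j-1},u_j,r_{j-1})\|_{L^2(\Omega)}$, and bound that by $Kh_j$ using the SDE expansion, the It\^o isometry with $u_j\leq 1$, and the $L^2$-estimate of Lemma \ref{lem:Ito} applied to $b$ and $\sigma_\ell$. Your explicit conditioning on $u_j$ to handle the random upper limit $t_{j-1}^u$ is a slightly more careful rendering of a step the paper performs implicitly, but the argument is the same.
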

\begin{proof}
	Clearly,  one can compute
	\begin{align*}
		&\Big\|  \max_{n\in \{1,\ldots, n_h \}} \big|  \sum_{j=1}^{n} \int_{t_{j-1}}^{t_j} \big[ b(s, X(s), r(s))-b( t_{j-1}^u, \Upsilon^h_j(X_{j-1}, u_j, r_{j-1}),  r_{j-1}^u) \big] ds\big|  \Big\|_{L^2(\Omega)}
		\\
		& \leq  \Big\|  \max_{n\in \{1,\ldots, n_h \}} \big|  \sum_{j=1}^{n} \int_{t_{j-1}}^{t_j} \big[ b(s, X(s), r(s))-b( t_{j-1}^u, X( t_{j-1}^u),   r_{j-1}^u) \big] ds\big|  \Big\|_{L^2(\Omega)}
		\\
		& + \Big\|  \max_{n\in \{1,\ldots, n_h \}} \big|  \sum_{j=1}^{n} \int_{t_{j-1}}^{t_j} \big[ b( t_{j-1}^u, X( t_{j-1}^u),   r_{j-1}^u)  -b( t_{j-1}^u, \Upsilon^h_j(X_{j-1}, u_j, r_{j-1}),  r_{j-1}^u) \big] ds\big|  \Big\|_{L^2(\Omega)}. 
	\end{align*}
	The first term in the above equation can be estimated by Corollary \ref{cor:quadrature_r(j-1)}. For the second term,  one can apply Assumption~\ref{as:drift} and Minkowski's inequality to obtain, 
	\begin{align}
		\Big\|  \max_{n\in \{1,\ldots, n_h \}}  & \big|  \sum_{j=1}^{n} \int_{t_{j-1}}^{t_j} \big[ b(s, X(s), r(s))-b( t_{j-1}^u, \Upsilon^h_j(X_{j-1}, u_j, r_{j-1}),  r_{j-1}^u) \big] ds\big|  \Big\|_{L^2(\Omega)} \notag
		\\
		& \leq K h + K   \sum_{j=1}^{n_h} h_j \big\|  X( t_{j-1}^u)-\Upsilon^h_j(X_{j-1}, u_j, r_{j-1}) \big\|_{L^2(\Omega)}. \label{eq:j1}
	\end{align}
	Further,  for the second term on the right side of Equation \eqref{eq:j1},  one uses Equations  \eqref{eq:sdems} and  \eqref{eq:Uep} to obtain the following, 
	\begin{align*}
		X( t_{j-1}^u) -\Upsilon^h_j(X_{j-1}, u_j, r_{j-1}) &=  X( t_{j-1}^u)- X_{j-1} - b(t_{j-1}, X_{j-1},  r_{j-1})   h_j u_j  
		\\
		& \quad - \sum_{\ell=1}^{\tilde{d}} \sigma_{\ell}(t_{j-1}, X_{j-1},  r_{j-1}) \int_{t_{j-1}}^{ t_{j-1}^u} dB_{\ell} (s)
		\\
		& = \int_{t_{j-1}}^{ t_{j-1}^u} [ b(s, X(s), r(s)) -  b(t_{j-1}, X_{j-1},  r_{j-1}) ] ds 
		\\
		& \quad +\sum_{\ell=1}^{\tilde{d}} \int_{t_{j-1}}^{ t_{j-1}^u} [\sigma_{\ell}(s, X(s), r(s)) - \sigma_{\ell}(t_{j-1}, X_{j-1},  r_{j-1}) ]dB_{\ell}(s)
	\end{align*}
	which on using Minkowski and H\"older's inequalities along with $u_j\leq 1$ yields, 
	\begin{align}
		\big\|  X(t_{j-1}&+h_j u_j)    -\Upsilon^h_j(X_{j-1}, u_j, r_{j-1})\big\|_{L^2(\Omega)} \notag
		\\
		& \leq  h_{j}^{1/2} \Big(  \int_{t_{j-1}}^{ t_{j}} \mathbb{E} \big[ | b(s, X(s), r(s)) -  b(t_{j-1}, X_{j-1},  r_{j-1}) |^2 \big] ds   \Big)^{1/2}\notag
		\\
		& \quad +  \sum_{\ell=1}^{\tilde{d}} \Big( \int_{t_{j-1}}^{ t_{j}} \mathbb{E} \big[ | \sigma_{\ell}(s, X(s), r(s)) - \sigma_{\ell}(t_{j-1}, X_{j-1},  r_{j-1}) |^2 \big] ds  \Big)^{1/2} \notag
	\end{align}
	for $j \in \{1, \ldots, n_h\}$. 
	Also,  the implementation of Lemma \ref{lem:Ito} leads to the following estimates,  
	\begin{align}
		\big\|  X(t_{j-1}&+h_j u_j)    -\Upsilon^h_j(X_{j-1}, u_j, r_{j-1})\big\|_{L^2(\Omega)} \leq K h \notag
	\end{align}
	for $j \in \{1, \ldots, n_h\}$ which on substituting in Equation \eqref{eq:j1} completes the proof. 
\end{proof}
Now,  using Lemma \ref{lem:Ito} and $M_{j_0 k_0}= [M_{j_0 k_0}]- \langle M_{j_0 k_0} \rangle $, one can obtain
\begin{align}
	\int_{t_{j-1}}^{t_j}  [\sigma_{\ell}(s&, X(s), r(s)) -   \sigma_{\ell}(t_{j-1} , X_{j-1}, r_{j-1}) ] dB_{\ell}(s)   \notag
	\\
	& = \int_{t_{j-1}}^{t_j} \sum_{j_0 \neq k_0} \int_{t_{j-1}}^s (\sigma_{\ell}(v,  X(v), k_0)-\sigma_{\ell}(v,  X(v), j_0)) d[M_{j_0 k_0}](v)dB_{\ell}(s)  \notag
	\\
	&\quad  - \int_{t_{j-1}}^{t_j} \sum_{j_0 \neq k_0} \int_{t_{j-1}}^s (\sigma_{\ell}(v,  X(v), k_0)-\sigma_{\ell}(v,  X(v), j_0)) d\langle M_{j_0 k_0}\rangle(v)dB_{\ell}(s)  \notag
	\\
	& \quad + \int_{t_{j-1}}^{t_j}\sum_{k_0 \in S} \int_{t_{j-1}}^s q_{r(v-)k_0} \big[\sigma_{\ell}(v,  X(v), k_0)-\sigma_{\ell}(v,  X(v), r(v-)) \big]dvdB_{\ell}(s) \notag
	\\
	& \quad + \int_{t_{j-1}}^{t_j} \sum_{k=0}^{N_{t_{j-1}}^{s}} \big(\sigma_{\ell}(\uptau_{k+1}, X(\uptau_{k+1}), r(\uptau_k))- \sigma_{\ell}(\uptau_{k}, X(\uptau_{k}),r(\uptau_k))\big) dB_{\ell}(s) \label{eq:Jsigma}
\end{align}
almost surely for any $j \in \{1, \ldots, n_h\}$ and $\ell \in \{1, \ldots, \tilde{d}\}$
where $t_{j-1}=\uptau_0<\uptau_1<\uptau_2 < \cdots< \uptau_{N_{t_{j-1}}^s}< \uptau_{N_{t_{j-1}}^s+1}=s$ are switching times of the chain $r$ in the interval $(t_{j-1}, s]$. 

By using \cite[Lemma 4.3]{nguyen2017milstein} and observing that $r(\uptau_{1}^{j-1}) = r(t_{j})$ on the set $\{N_{t_{j-1}}^{t_j} = 1 \}$,  the first term on the right side of Equation \eqref{eq:Jsigma} can be written as, 
\begin{align}
	\int_{t_{j-1}}^{t_j} &\sum_{j_0 \neq k_0} \int_{t_{j-1}}^s (\sigma_{\ell}(v,  X(v), k_0)-\sigma_{\ell}(v,  X(v), j_0)) d[M_{j_0 k_0}](v)dB_{\ell}(s) \notag
	\\
	& =  \mathbbm{1}_{\big\{N_{t_{j-1}}^{t_j}=1\big\}} \big(  \sigma_{\ell}(\uptau_{1}^{j-1},X(\uptau_{1}^{j-1}), r_j) - \sigma_{\ell}(\uptau_{1}^{j-1},X(\uptau_{1}^{j-1}), r_{j-1}) \big) \big( B_{\ell}(t_{j})-B_{\ell}(\uptau_{1}^{j-1})\big)  \notag
	\\
	& \quad + \int_{t_{j-1}}^{t_j} \mathbbm{1}_{\big\{N_{t_{j-1}}^{t_j}\geq 2\big\}}\sum_{j_0 \neq k_0} \int_{t_{j-1}}^s (\sigma_{\ell}(v,  X(v), k_0)-\sigma_{\ell}(v,  X(v), j_0)) d[M_{j_0 k_0}](v)dB_{\ell}(s) \label{eq:Jsigma1}
\end{align}
almost surely for any $j \in \{1, \ldots, n_h\}$ and $\ell \in \{1, \ldots, \tilde{d}\}$. 

For the last term on the right side of Equation \eqref{eq:Jsigma},  one uses Equation \eqref{eq:sdems} to obtain, 
\begin{align}
	\int_{t_{j-1}}^{t_j} &\sum_{k=0}^{N_{t_{j-1}}^{s}} \big(\sigma_{\ell}(\uptau_{k+1}, X(\uptau_{k+1}), r(\uptau_k))- \sigma_{\ell}(\uptau_{k}, X(\uptau_{k}),r(\uptau_k))\big) dB_{\ell}(s)   \notag
	\\
	& =\int_{t_{j-1}}^{t_j} \sum_{k=0}^{N_{t_{j-1}}^{s}} \big(\sigma_{\ell}(\uptau_{k+1}, X(\uptau_{k+1}), r(\uptau_k))- \sigma_{\ell}(\uptau_{k}, X(\uptau_{k}),r(\uptau_k))  \notag
	\\
	& \qquad - \mathcal{D}_x \sigma_{\ell}(\uptau_k,X(\uptau_k),r(\uptau_k))(X(\uptau_{k+1}) - X(\uptau_k))\big)dB_{\ell}(s)  \notag
	\\
	& \quad + \int_{t_{j-1}}^{t_j} \sum_{k=0}^{N_{t_{j-1}}^{s}} \int_{\uptau_k}^{\uptau_{k+1}} \mathcal{D}_x \sigma_{\ell}(\uptau_k,X(\uptau_k),r(\uptau_k)) b(v,X(v),r(v)) dvdB_{\ell}(s) \notag
	\\
	& +  \int_{t_{j-1}}^{t_j}\sum_{k=0}^{N_{t_{j-1}}^{s}} \int_{\uptau_k}^{\uptau_{k+1}} \mathcal{D}_x \sigma_{\ell}(\uptau_k,X(\uptau_k),r(\uptau_k)) \sum_{\ell_2=1}^{\tilde{d}}\sigma_{\ell_2}(v,X(v),r(v)) dB_{\ell_{2}}(v) dB_{\ell}(s) \label{eq:Jsigma4}
\end{align}
almost surely for any $j \in \{1, \ldots, n_h\}$ and $\ell \in \{1, \ldots, \tilde{d}\}$.  

Moreover,  one can also write
\begin{align}
	\int_{t_{j-1}}^{t_{j}} & \int_{t_{j-1}}^{s}  \mathcal{D}_x \sigma_{\ell_1}(t_{j-1}, X_{j-1}, r_{j-1}) \sigma_{\ell_2}(t_{j-1}, X_{j-1}, r_{j-1}) dB_{\ell_{2}}(v)dB_{\ell_{1}}(s)  \notag
	\\
	& = \int_{t_{j-1}}^{t_{j}} \sum_{k=0}^{N_{t_{j-1}}^{s}} \int_{\uptau_k}^{\uptau_{k+1}}  \mathcal{D}_x \sigma_{\ell_1}(t_{j-1}, X_{j-1}, r_{j-1}) \sigma_{\ell_2}(t_{j-1}, X_{j-1}, r_{j-1}) dB_{\ell_{2}}(v)dB_{\ell_{1}}(s)  \label{eq:j22}
\end{align}
almost surely for any $j \in \{1, \ldots, n_h\}$ and $\ell_1, \ell_2 \in \{1, \ldots, \tilde{d}\}$. 

Now,  let us  substitute  Equations \eqref{eq:Jsigma1} and \eqref{eq:Jsigma4} in Equation \eqref{eq:Jsigma} and then use Equation \eqref{eq:j22} to obtain, 
\begin{align}
	& \sum_{\ell=1}^{\tilde{d}} \int_{t_{j-1}}^{t_j} \big[\sigma_{\ell}(s, X(s), r(s)) -  \sigma_{\ell}(t_{j-1}, X_{j-1}, r_{j-1})\big] dB_{\ell}(s)   \notag  
	\\
	& 
	\quad - \sum_{\ell_{1},\ell_{2}=1}^{\tilde{d}}\int_{t_{j-1}}^{t_{j}} \int_{t_{j-1}}^{s}  \mathcal{D}_x \sigma_{\ell_1}(t_{j-1}, X_{j-1}, r_{j-1}) \sigma_{\ell_2}(t_{j-1}, X_{j-1}, r_{j-1}) dB_{\ell_{2}}(v)dB_{\ell_{1}}(s) \notag
	\\
	&
	\quad - \sum_{\ell=1}^{\tilde{d}} \mathbbm{1}_{\big\{N_{t_{j-1}}^{t_j}=1\big\}} \big( \sigma_{\ell}(t_{j-1},X_{j-1}, r_j)- \sigma_{\ell}(t_{j-1},X_{j-1},  r_{j-1}) \big) \big( B_{\ell}(t_{j})-B_{\ell}(\uptau_{1}^{j-1})\big) \notag
	\\
	& =   \sum_{\ell=1}^{\tilde{d}} \mathbbm{1}_{\big\{N_{t_{j-1}}^{t_j}=1\big\}} \big(  \sigma_{\ell}(\uptau_{1}^{j-1},X(\uptau_{1}^{j-1}), r_j) - \sigma_{\ell}(\uptau_{1}^{j-1},X(\uptau_{1}^{j-1}), r_{j-1}) \notag
	\\
	& \qquad - \sigma_{\ell}(t_{j-1},X_{j-1}, r_j) + \sigma_{\ell}(t_{j-1},X_{j-1},  r_{j-1}) \big) \big( B_{\ell}(t_{j})-B_{\ell}(\uptau_{1}^{j-1})\big) \notag 
	\\
	& \quad + \sum_{\ell=1}^{\tilde{d}} \int_{t_{j-1}}^{t_{j}} \mathbbm{1}_{\big\{N_{t_{j-1}}^{t_j} \geq 2\big\}} \sum_{j_0 \neq k_0} \int_{t_{j-1}}^s (\sigma_{\ell}(v,  X(v), k_0)-\sigma_{\ell}(v,  X(v), j_0)) d[M_{j_0 k_0}](v) dB_{\ell}(s) \notag
	\\
	& \quad -  \sum_{\ell=1}^{\tilde{d}} \int_{t_{j-1}}^{t_{j}}  \sum_{j_0 \neq k_0} \int_{t_{j-1}}^s (\sigma_{\ell}(v,  X(v), k_0)-\sigma_{\ell}(v,  X(v), j_0)) d\langle M_{j_0 k_0}\rangle (v) dB_{\ell}(s) \notag
	\\
	&\quad + \sum_{\ell=1}^{\tilde{d}} \int_{t_{j-1}}^{t_{j}} \sum_{k_0 \in S} \int_{t_{j-1}}^s q_{r(v-)k_0} \big[\sigma_{\ell}(v,  X(v), k_0)-\sigma_{\ell}(v,  X(v), r(v-)) \big]dv dB_{\ell}(s) \notag 
	\\
	& \quad +  \sum_{\ell=1}^{\tilde{d}} \int_{t_{j-1}}^{t_{j}} \sum_{k=0}^{N_{t_{j-1}}^{s}} \big[\sigma_{\ell}(\uptau_{k+1}, X(\uptau_{k+1}), r(\uptau_k))- \sigma_{\ell}(\uptau_{k}, X(\uptau_{k}),r(\uptau_k)) \notag
	\\
	& \qquad - \mathcal{D}_x \sigma_{\ell}(\uptau_k,X(\uptau_k),r(\uptau_k))(X(\uptau_{k+1}) - X(\uptau_k)) \big] dB_{\ell}(s) \notag 
	\\
	& \quad +  \sum_{\ell=1}^{\tilde{d}}\int_{t_{j-1}}^{t_{j}} \sum_{k=0}^{N_{t_{j-1}}^{s}} \int_{\uptau_k}^{\uptau_{k+1}} \mathcal{D}_x \sigma_{\ell}(\uptau_k,X(\uptau_k),r(\uptau_k))b(v,X(v),r(v)) dv dB_{\ell}(s) \notag
	\\
	& 
	\quad + \sum_{\ell_{1},\ell_{2}=1}^{\tilde{d}}\int_{t_{j-1}}^{t_{j}} \sum_{k=0}^{N_{t_{j-1}}^{s}} \int_{\uptau_k}^{\uptau_{k+1}} \mathcal{D}_x \sigma_{\ell_1}(\uptau_k,X(\uptau_k),r(\uptau_k))\sigma_{\ell_2}(v,X(v),r(v)) \notag
	\\
	& \qquad - \mathcal{D}_x \sigma_{\ell_1}(t_{j-1}, X_{j-1}, r_{j-1}) \sigma_{\ell_2}(t_{j-1}, X_{j-1}, r_{j-1}) dB_{\ell_{2}}(v)dB_{\ell_{1}}(s) \notag
	\\
	& 
	=: \mathcal{I}_j^{(1)} + \mathcal{I}_j^{(2)} + \mathcal{I}_j^{(3)} + \mathcal{I}_j^{(4)} + \mathcal{I}_j^{(5)} + \mathcal{I}_j^{(6)} + \mathcal{I}_j^{(7)}
	\label{eq:I1-7}
\end{align}
almost surely for any $j \in \{1, \ldots, n_h\}$. 
\begin{lemma}
	\label{lem:I1}
	Let Assumptions \ref{as:initial} to \ref{as:diffusion} be satisfied. Then,
	\begin{align*}
		\sum_{i=1}^7 \Big\| \max_{n\in \{1,\ldots, n_h \}} \big| \sum_{j=1}^{n} \mathcal{I}_j^{(i)} \big| \Big\|_{L^2(\Omega)} \leq Kh.
	\end{align*}
\end{lemma}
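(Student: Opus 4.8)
The plan is to estimate the seven partial-sum processes $\big\{\sum_{j=1}^n\mathcal I_j^{(i)}\big\}_{n\in\{1,\ldots,n_h\}}$, $i\in\{1,\ldots,7\}$, one at a time. For each $i$ the process is a discrete-time martingale with respect to an appropriate filtration: for $i=1$ it is a martingale with respect to $\big\{\tilde{\mathscr{F}}^B_{\uptau_1^n\wedge t_{n+1}}\vee\tilde{\mathscr{F}}^r_T\otimes\tilde{\mathscr{F}}^u_n\big\}_n$, since conditionally on $\tilde{\mathscr{F}}^B_{\uptau_1^{j-1}}\vee\tilde{\mathscr{F}}^r_T$ the factor multiplying $B_\ell(t_j)-B_\ell(\uptau_1^{j-1})$ is measurable while that increment is centred (using the independence of $B$ and $r$); for $i\in\{2,\ldots,7\}$ each $\mathcal I_j^{(i)}$ is an It\^o or iterated It\^o integral over $(t_{j-1},t_j]$ of an adapted integrand, hence centred given $\mathscr{F}^h_{j-1}$ after first conditioning on $\tilde{\mathscr{F}}^r_T$, so the partial sums are $\{\mathscr{F}^h_n\}_n$-martingales. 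By Doob's maximal inequality together with the orthogonality of martingale increments, $\big\|\max_n\big|\sum_{j=1}^n\mathcal I_j^{(i)}\big|\big\|_{L^2(\Omega)}^2\le 4\sum_{j=1}^{n_h}\mathbb{E}[|\mathcal I_j^{(i)}|^2]$, and since $\sum_{j=1}^{n_h}h_j^3\le h^2\sum_{j=1}^{n_h}h_j=h^2T$, it suffices to show $\mathbb{E}[|\mathcal I_j^{(i)}|^2]\le Kh_j^3$ for every $j\in\{1,\ldots,n_h\}$ and every $i\in\{1,\ldots,7\}$.

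The six estimates for $i\in\{1,\ldots,6\}$ all follow from the It\^o isometry applied conditionally on $\mathcal F^r_T$ (so the switching times and $N_{t_{j-1}}^{t_j}$ are frozen), the linear-growth and regularity bounds of Remarks \ref{rem:D} and \ref{rem:growth}, the moment and $1/2$-H\"older-in-time estimates of Theorem \ref{th:eu} and Lemma \ref{moment}, and the switching estimates of Lemma \ref{lem:jump}. Concretely: for $\mathcal I_j^{(1)}$, on $\{N_{t_{j-1}}^{t_j}=1\}$ the coefficient difference is bounded via Assumption \ref{as:diffusion} by $K\{(1+\sup_{[t_{j-1},t_j]}|X|)h_j+|X(\uptau_1^{j-1})-X(t_{j-1})|\}$, whose conditional second moment is $\le Kh_j$; multiplying by $\mathbb{E}[(B_\ell(t_j)-B_\ell(\uptau_1^{j-1}))^2\mid\cdot]\le h_j$ and by $\mathbb{P}(N_{t_{j-1}}^{t_j}=1)\le\mathfrak{q}h_j$ gives $Kh_j^3$. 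For $\mathcal I_j^{(2)}$, on $\{N_{t_{j-1}}^{t_j}\ge2\}$ the inner $d[M_{j_0k_0}]$-integral is $\le KN_{t_{j-1}}^{t_j}(1+\sup_{[t_{j-1},t_j]}|X|)$, the outer It\^o isometry contributes a factor $h_j$, and $\mathbb{E}[(N_{t_{j-1}}^{t_j})^2\mathbbm{1}_{\{N_{t_{j-1}}^{t_j}\ge2\}}]\le Kh_j^2$ by the tail bound in Lemma \ref{lem:jump}. For $\mathcal I_j^{(3)}$ and $\mathcal I_j^{(4)}$, the inner $d\langle M_{j_0k_0}\rangle$- and $dv$-integrals over $(t_{j-1},s]$ each contribute a factor $h_j$ together with a linear-growth factor, and the outer It\^o isometry contributes $\int_{t_{j-1}}^{t_j}h_j^2\,ds=h_j^3$. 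For $\mathcal I_j^{(5)}$, the second inequality of Remark \ref{rem:D} bounds the $k$-th summand by $K\{(1+\sup|X|)|\uptau_{k+1}-\uptau_k|+|X(\uptau_{k+1})-X(\uptau_k)|^2\}$; summing over $k$ telescopes $\sum_k|\uptau_{k+1}-\uptau_k|=s-t_{j-1}\le h_j$, while $\big\|\sum_k|X(\uptau_{k+1})-X(\uptau_k)|^2\big\|_{L^2(\cdot\mid\mathcal F^r_T)}\le K\sum_k|\uptau_{k+1}-\uptau_k|\le Kh_j$ by Theorem \ref{th:eu}, so the integrand has conditional second moment $\le Kh_j^2$ and the outer isometry yields $Kh_j^3$. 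For $\mathcal I_j^{(6)}$, the iterated $dv$-integral over the switching sub-intervals covers $(t_{j-1},s]$ and is $\le Kh_j(1+\sup_{[t_{j-1},t_j]}|X|)^2$, whence $\mathbb{E}[|\mathcal I_j^{(6)}|^2]\le Kh_j^3$ by the It\^o isometry and Theorem \ref{th:eu}.

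The main obstacle is $\mathcal I_j^{(7)}$, the genuine Milstein-correction term, whose integrand difference $\mathcal{D}_x\sigma_{\ell_1}(\uptau_k,X(\uptau_k),r(\uptau_k))\sigma_{\ell_2}(v,X(v),r(v))-\mathcal{D}_x\sigma_{\ell_1}(t_{j-1},X_{j-1},r_{j-1})\sigma_{\ell_2}(t_{j-1},X_{j-1},r_{j-1})$ entangles two kinds of error --- the freezing of the coefficient in time and state, and the possible change of the chain's state inside $(t_{j-1},t_j]$. The remedy is to split $1=\mathbbm{1}_{\{N_{t_{j-1}}^{t_j}=0\}}+\mathbbm{1}_{\{N_{t_{j-1}}^{t_j}\ge1\}}$. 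On $\{N_{t_{j-1}}^{t_j}=0\}$ one has $r(v-)=r_{j-1}$ and $\uptau_0=t_{j-1}$, so the leading $\mathcal{D}_x\sigma_{\ell_1}$ factors cancel and the difference reduces to $\mathcal{D}_x\sigma_{\ell_1}(t_{j-1},X_{j-1},r_{j-1})\big[\sigma_{\ell_2}(v,X(v),r_{j-1})-\sigma_{\ell_2}(t_{j-1},X_{j-1},r_{j-1})\big]$, which by Remarks \ref{rem:D} and \ref{rem:growth} is $\le K(1+\sup|X|)\{h_j+|X(v)-X_{j-1}|\}$ with conditional second moment $\le K|v-t_{j-1}|$ for $h\le1$; the double It\^o isometry conditional on $\mathcal F^r_T$ then gives $\int_{t_{j-1}}^{t_j}\int_{t_{j-1}}^s|v-t_{j-1}|\,dv\,ds\le Kh_j^3$. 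On $\{N_{t_{j-1}}^{t_j}\ge1\}$ the difference is merely $O(1+\sup|X|)$, so the double isometry bounds the conditional second moment by $Kh_j^2$, but $\mathbb{P}(N_{t_{j-1}}^{t_j}\ge1)\le\mathfrak{q}h_j$ by Lemma \ref{lem:jump}, so again $\mathbb{E}[|\mathcal I_j^{(7)}|^2]\le Kh_j^3$. Collecting the seven estimates and using $\sum_{j=1}^{n_h}h_j^3\le h^2T$ completes the proof.
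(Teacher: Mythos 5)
Your argument is correct in structure and, unlike the paper, self-contained: the paper's proof of this lemma consists only of pointers to the remainder-term estimates in \cite[Lemmas 4.5--4.7]{nguyen2017milstein} and \cite[Lemmas 6--9]{kumar2021note}, whereas you actually carry out those estimates. The ingredients you use --- identifying each partial-sum process as a martingale for a suitable filtration (the stopped filtration $\tilde{\mathscr{F}}^B_{\uptau_1^n\wedge t_{n+1}}\vee\tilde{\mathscr{F}}^r_T$ for $\mathcal I^{(1)}$, the grid filtration after conditioning on $\mathcal F_T^r$ for the It\^o-integral terms), reducing via Doob and orthogonality to $\mathbb{E}[|\mathcal I_j^{(i)}|^2]\le Kh_j^3$, and then trading powers of $h_j$ against switching probabilities from Lemma \ref{lem:jump} --- are exactly the ones the cited lemmas use, so this is a faithful reconstruction rather than a new method; its value is that the reader need not consult two external papers. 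Your treatment of $\mathcal I^{(7)}$ (splitting on $\{N_{t_{j-1}}^{t_j}=0\}$, where the frozen coefficient cancels and Remark \ref{rem:D} gives the order-$|v-t_{j-1}|$ bound, versus $\{N_{t_{j-1}}^{t_j}\ge1\}$, where the crude bound is compensated by $\mathbb{P}(N_{t_{j-1}}^{t_j}\ge1)\le\mathfrak q h_j$) is the right dichotomy and matches how the corresponding remainder is handled in \cite{kumar2021note}.

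One caveat you should make explicit: several of your second-moment bounds (for $\mathcal I^{(5)}$ the term $\|\sum_k|X(\uptau_{k+1})-X(\uptau_k)|^2\|_{L^2}$, for $\mathcal I^{(6)}$ the factor $(1+\sup|X|)^2$ in $L^2$, and for $\mathcal I^{(7)}$ the product $(1+|X_{j-1}|)|X(v)-X_{j-1}|$ estimated by Cauchy--Schwarz) require fourth conditional moments of $X$ and of its increments, i.e.\ Theorem \ref{th:eu} with $\tilde p=4$, which is only available when $p\ge4$ in Assumption \ref{as:initial}. The lemma as stated assumes only ``Assumptions \ref{as:initial} to \ref{as:diffusion}'' with $p\ge2$; the externally cited lemmas impose the corresponding higher-moment hypotheses, so this is an implicit strengthening inherited from the literature rather than an error specific to your argument, but it should be stated rather than left tacit.
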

\begin{proof}
	
	The estimate of the terms involving $\mathcal{I}^1$, $\mathcal{I}^2$ and $\mathcal{I}^3$ follow from the estimates of the remainder terms $R_{p,7}$, $R_{p,8}$ and $R_{p,5}$  in \cite[Equation 4.21]{nguyen2017milstein}, in Lemmas 4.5, 4.6 and 4.7, respectively.
	Moreover, one can estimate the terms involving $\mathcal{I}^4$, $\mathcal{I}^5$, $\mathcal{I}^6$ and $\mathcal{I}^7$ by following the same path as of remainder terms $R_{n}(9)$, $R_{n}(10)$, $R_{n}(11)$ and $R_{n}(12)$ in \cite[Equation 10]{kumar2021note} and their corresponding Lemmas 8, 6, 7 and 9, respectively.
\end{proof}
Let us recall $X^{\varrho_h}$ from Equation \eqref{eq:Xrho}.
The following lemma states that the scheme \eqref{eq:scheme} is consistent.
\begin{lemma}
	\label{lem:bound}
	Assume that Assumptions \ref{as:initial} to \ref{as:diffusion} hold. Then, 
	\begin{align*}
		\big\|\mathcal{R}[X^{\varrho_h}] \big\|_{G^h_{2,S}} \leq K h
	\end{align*}
\end{lemma}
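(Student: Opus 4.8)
The plan is to write $\sum_{j=1}^n\mathcal{R}_j[X^{\varrho_h}]$ explicitly and recognize every piece as a quantity already controlled by the preceding lemmas. First note that $X^{\varrho_h}\in G_q^h$ by Theorem \ref{th:eu}, so the residual $\mathcal{R}[X^{\varrho_h}]$ is well defined and Equation \eqref{eq:resY} applies. Since $X_0^h=X_0$ by construction, we have $\mathcal{R}_0[X^{\varrho_h}]=X_0-X_0^h=0$, hence the first summand in $\|\cdot\|_{G^h_{2,S}}$ vanishes and it suffices to estimate $\big\|\max_{n\in\{1,\ldots,n_h\}}\big|\sum_{j=1}^n\mathcal{R}_j[X^{\varrho_h}]\big|\big\|_{L^2(\Omega)}$.

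Next, by Equation \eqref{eq:resY}, $\sum_{j=1}^n\mathcal{R}_j[X^{\varrho_h}]=X_n-X_0-\sum_{j=1}^n\Xi_j^h(X_{j-1},u_j,r_{j-1}^u,r_{j-1},r_j)$. Substituting SDEwMS \eqref{eq:sdems} for $X_n-X_0=\sum_{j=1}^n\big[\int_{t_{j-1}}^{t_j}b(s,X(s),r(s))\,ds+\sum_{\ell=1}^{\tilde d}\int_{t_{j-1}}^{t_j}\sigma_\ell(s,X(s),r(s))\,dB_\ell(s)\big]$ and the definition \eqref{eq:incrment} of $\Xi_j^h$ (with $y=X_{j-1}$), and then grouping terms, the drift contributions combine into $\sum_{j=1}^n\int_{t_{j-1}}^{t_j}\big[b(s,X(s),r(s))-b(t_{j-1}^u,\Upsilon_j^h(X_{j-1},u_j,r_{j-1}),r_{j-1}^u)\big]\,ds$, while the diffusion contributions combine into exactly the left-hand side of the decomposition \eqref{eq:I1-7}, namely $\sum_{j=1}^n\big(\mathcal{I}_j^{(1)}+\mathcal{I}_j^{(2)}+\mathcal{I}_j^{(3)}+\mathcal{I}_j^{(4)}+\mathcal{I}_j^{(5)}+\mathcal{I}_j^{(6)}+\mathcal{I}_j^{(7)}\big)$. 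Thus $\sum_{j=1}^n\mathcal{R}_j[X^{\varrho_h}]=\sum_{j=1}^n\int_{t_{j-1}}^{t_j}\big[b(s,X(s),r(s))-b(t_{j-1}^u,\Upsilon_j^h(X_{j-1},u_j,r_{j-1}),r_{j-1}^u)\big]\,ds+\sum_{i=1}^7\sum_{j=1}^n\mathcal{I}_j^{(i)}$.

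Finally, take $\max_{n}$ and the $L^2(\Omega)$-norm, and apply the triangle (Minkowski) inequality: the drift term is bounded by $Kh$ by Lemma \ref{lem:J1}, and $\sum_{i=1}^7\big\|\max_n\big|\sum_{j=1}^n\mathcal{I}_j^{(i)}\big|\big\|_{L^2(\Omega)}\le Kh$ by Lemma \ref{lem:I1}. Adding these bounds and recalling that $\mathcal{R}_0[X^{\varrho_h}]=0$ gives $\|\mathcal{R}[X^{\varrho_h}]\|_{G^h_{2,S}}\le Kh$, as claimed.

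\textbf{Expected main obstacle.} No analytic difficulty remains at this point; the estimates are all in Lemmas \ref{lem:J1}, \ref{lem:I1} and Corollary \ref{cor:quadrature_r(j-1)}. The only care needed is the algebraic bookkeeping: verifying that after substituting \eqref{eq:sdems} and \eqref{eq:incrment} the drift part is precisely the quantity estimated in Lemma \ref{lem:J1}, and that the diffusion part (including the $\mathbbm{1}_{\{N_{t_{j-1}}^{t_j}=1\}}$ term and the It\^o double integral) matches term-by-term the left-hand side of \eqref{eq:I1-7}. I would also state at the outset that $X^{\varrho_h}\in G_q^h$ so that \eqref{eq:resY} is legitimate.
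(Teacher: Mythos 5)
Your proposal is correct and follows essentially the same route as the paper: expand $\sum_{j=1}^n\mathcal{R}_j[X^{\varrho_h}]$ via the SDE and the increment function, bound the drift part by Lemma \ref{lem:J1} and the diffusion part by the decomposition \eqref{eq:I1-7} together with Lemma \ref{lem:I1}, then apply Minkowski. The only cosmetic difference is that you explicitly observe $\mathcal{R}_0[X^{\varrho_h}]=0$, whereas the paper carries the (vanishing) term $\|X_0-X_0^h\|_{L^2(\Omega)}$ along.
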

\begin{proof}
	Using the definition of residuals from Equations \eqref{eq:incrment} and \eqref{eq:residual},  
	\begin{align}
		\mathcal{R}_0[X^{\varrho_h}]=  & X_0 - X_0^h \notag
		\\
		\mathcal{R}_j[X^{\varrho_h}] = & X_j- X_{j-1} - \Xi_j^h(X_{j-1} , u_j, r_{j-1}^u,  r_{j-1}, r_j) \notag
		\\
		=&\int_{t_{j-1}}^{t_j} \big[ b(s, X(s), r(s))-b( t_{j-1}^u, \Upsilon^h_j(X_{j-1}, u_j, r_{j-1}),  r_{j-1}^u) \big] ds \notag
		\\
		& +\sum_{\ell=1}^{\tilde{d}}\int_{t_{j-1}}^{t_j} \big[\sigma_{\ell}(s, X(s), r(s)) -  \sigma_{\ell}(t_{j-1}, X_{j-1}, r_{j-1})\big] dB_{\ell}(s) \notag
		\\
		&-  \sum_{\ell_{1},\ell_{2}=1}^{\tilde{d}}\int_{t_{j-1}}^{t_{j}} \int_{t_{j-1}}^{s}  \mathcal{D}_x \sigma_{\ell_1}(t_{j-1}, X_{j-1}, r_{j-1}) \sigma_{\ell_2}(t_{j-1}, X_{j-1}, r_{j-1}) dB_{\ell_{2}}(v)dB_{\ell_{1}}(s) \notag
		\\
		& -\sum_{\ell=1}^{\tilde{d}} \mathbbm{1}_{\big\{N_{t_{j-1}}^{t_j}=1\big\}} \big( \sigma_{\ell}(t_{j-1},X_{j-1}, r_j)- \sigma_{\ell}(t_{j-1},X_{j-1},  r_{j-1}) \big) \big( B_{\ell}(t_{j})-B_{\ell}(\uptau_{1}^{j-1})\big) \notag
	\end{align} 
	for $j\in \{1,\ldots, n_h\}$ which due to Equation \eqref{eq:norm:spiker} and Minkowski inequality gives, 
	\begin{align}
		\big\|\mathcal{R}[&X^{\varrho_h}] \big\|_{G^h_{2,S}} = \big\| \mathcal{R}_0[X^{\varrho_h}]\big\|_{L^2(\Omega)} + \Big\|  \max_{n\in \{1,\ldots, n_h \}}  \big| \sum_{j=1}^{n}\mathcal{R}_j  [X^{\varrho_h}] \big| \Big\|_{L^2(\Omega)} \notag 
		\\
		\leq & \big\| X_0 - X_0^h \big\|_{L^2(\Omega)}   \notag
		\\
		& + \Big\|  \max_{n\in \{1,\ldots, n_h \}} \big|  \sum_{j=1}^{n} \int_{t_{j-1}}^{t_j} \big[ b(s, X(s), r(s))-b( t_{j-1}^u, \Upsilon^h_j(X_{j-1}, u_j, r_{j-1}),  r_{j-1}^u) \big] ds\big|  \Big\|_{L^2(\Omega)} \notag  
		\\
		& +\Big\|  \max_{n\in \{1,\ldots, n_h \}} \big| \sum_{j=1}^{n} \sum_{\ell=1}^{\tilde{d}} \int_{t_{j-1}}^{t_j} \big[\sigma_{\ell}(s, X(s), r(s)) -  \sigma_{\ell}(t_{j-1}, X_{j-1}, r_{j-1})\big] dB_{\ell}(s)   \notag  
		\\
		& 
		- \sum_{j=1}^{n} \sum_{\ell_{1},\ell_{2}=1}^{\tilde{d}}\int_{t_{j-1}}^{t_{j}} \int_{t_{j-1}}^{s}  \mathcal{D}_x \sigma_{\ell_1}(t_{j-1}, X_{j-1}, r_{j-1}) \sigma_{\ell_2}(t_{j-1}, X_{j-1}, r_{j-1}) dB_{\ell_{2}}(v)dB_{\ell_{1}}(s) \notag
		\\
		&
		- \sum_{j=1}^{n} \sum_{\ell=1}^{\tilde{d}} \mathbbm{1}_{\big\{N_{t_{j-1}}^{t_j}=1\big\}} \big( \sigma_{\ell}(t_{j-1},X_{j-1}, r_j)- \sigma_{\ell}(t_{j-1},X_{j-1},  r_{j-1}) \big) \big( B_{\ell}(t_{j})-B_{\ell}(\uptau_{1}^{j-1})\big)   \big|  \Big\|_{L^2(\Omega)} \notag.
	\end{align}
	Further, using Lemma \ref{lem:J1} and Equation \eqref{eq:I1-7},
	\begin{align*}
		\big\|\mathcal{R}[X^{\varrho_h}]\big\|_{G^h_{2,S}} & \leq Kh + \sum_{i=1}^7 \Big\| \max_{n\in \{1,\ldots, n_h \}} \Big| \sum_{j=1}^{n} \mathcal{I}_j^{(i)}\Big| \Big\|_{L^2(\Omega)}
	\end{align*}
	Finally, the use of Lemma \ref{lem:I1} completes the proof.
\end{proof}
\begin{proof}[\textbf{Proof of Theorem \ref{thm:rate}}]
	The proof is a consequence of Theorem \ref{thm:bi} and Lemma \ref{lem:bound}.
	Using Bistability Theorem \ref{thm:bi} for $q =2$ with $Y^h = X^{\varrho_h} \in G_2^h$ and Lemma \ref{lem:bound} gives,
	\begin{align*}
		\big\| X^{\varrho_h}-X^h \big\|_{G_2^h}  &=  \Big\| \max_{n\in \{0,\ldots, n_h \}} |X_n - X_n^h|\Big\|_{L^2(\Omega)} \leq C_2  \big\|\mathcal{R}[X^{\varrho_h}] \big\|_{G^h_{2,S}} \leq Kh. \notag
	\end{align*}
	The proof of the main result is hence completed. 
\end{proof}
\begin{lemma}
	\label{lem:moment:m}
	Let Assumptions \ref{as:initial}, \ref{as:drift} and \ref{as:diffusion} hold. Then, there exists a constant $K > 0$, independent of $h$, such that 
	$$\mathbb{E}\big[\max_{n \in \{0,\ldots,n_h\}} | X_{n}^{h,M}|^2 | \mathcal{F}_T^r \big]\leq K. $$
\end{lemma}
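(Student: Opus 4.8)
The plan is to transcribe the proof of Lemma~\ref{moment} almost verbatim: the modified randomized scheme \eqref{eq:euler:wsi}--\eqref{eq:scheme:wsi} is obtained from \eqref{eq:euler}--\eqref{eq:scheme} solely by replacing the increment $B_\ell(t_j)-B_\ell(\uptau_1^{j-1})$ in the last term of \eqref{eq:scheme} by $B_\ell(t_j)-B_\ell(t_{j-1})$, so essentially every estimate in the proof of Lemma~\ref{moment} carries over with $X^h$ replaced by $X^{h,M}$. First I would estimate the auxiliary stage from \eqref{eq:euler:wsi}: using $u_j\le 1$, the linear growth of $b$ and $\sigma_\ell$ from Remark~\ref{rem:growth} and the conditional It\^o isometry applied to $\int_{t_{j-1}}^{t_{j-1}^u}\sigma_\ell(t_{j-1},X_{j-1}^{h,M},r_{j-1})\,dB_\ell$ (conditioning on $\mathcal{F}_T^r$, $\mathscr F_{j-1}^h$ and $u_j$), one obtains the analogue of \eqref{eq:meu},
\[
\mathbb{E}\big[|X_j^{h,u,M}|^2\,\big|\,\mathcal{F}_T^r\big]\le K\,\mathbb{E}\big[|X_{j-1}^{h,M}|^2\big]+K h_j\big(1+\mathbb{E}\big[|X_{j-1}^{h,M}|^2\big]\big),\qquad j\in\{1,\dots,n_h\}.
\]

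Next I would expand $X_n^{h,M}$ telescopically via \eqref{eq:scheme:wsi}, split $\max_{n\le k}|X_n^{h,M}|^2$ (inside $\mathbb{E}[\,\cdot\,|\mathcal{F}_T^r]$) into five contributions --- the initial value, the drift sum $\sum_{j\le n}b(t_{j-1}^u,X_j^{h,u,M},r_{j-1}^u)h_j$, the first-order diffusion sum, the iterated-integral (Milstein) sum, and the switching sum carrying $\mathbbm{1}_{\{N_{t_{j-1}}^{t_j}=1\}}$ --- and estimate each exactly as in Lemma~\ref{moment}. The drift sum is handled by the discrete Cauchy--Schwarz inequality, Remark~\ref{rem:growth} and the displayed bound on $X_j^{h,u,M}$; the first-order diffusion sum and the iterated-integral sum are, conditionally on $\mathcal{F}_T^r$ (and since $B$ and $r$ are independent), $\{\mathscr F_n^h\}$-martingales, so the martingale maximal inequality together with the conditional It\^o isometry and the linear growth of $\sigma_\ell$ and of $\mathcal{D}_x\sigma_{\ell_1}\sigma_{\ell_2}$ (Remark~\ref{rem:growth}) produces terms of order $\sum_{j}h_j\big(1+\mathbb{E}[|X_{j-1}^{h,M}|^2\mid\mathcal{F}_T^r]\big)$ and $\sum_j h_j^2(\cdots)$.

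The only point that genuinely differs from Lemma~\ref{moment} is the switching sum. Conditionally on $\mathcal{F}_T^r$ the whole trajectory of $r$ is known, so the coefficient $\mathbbm{1}_{\{N_{t_{j-1}}^{t_j}=1\}}\big\{\sigma_\ell(t_{j-1},X_{j-1}^{h,M},r_j)-\sigma_\ell(t_{j-1},X_{j-1}^{h,M},r_{j-1})\big\}$ is $\tilde{\mathscr F}_{t_{j-1}}^B\vee\tilde{\mathscr F}_{j-1}^u$-measurable, while $B_\ell(t_j)-B_\ell(t_{j-1})$ is independent of it with conditional variance $h_j$; hence $\big\{\sum_{j\le n}\sum_\ell \mathbbm{1}_{\{N_{t_{j-1}}^{t_j}=1\}}\big\{\sigma_\ell(t_{j-1},X_{j-1}^{h,M},r_j)-\sigma_\ell(t_{j-1},X_{j-1}^{h,M},r_{j-1})\big\}\{B_\ell(t_j)-B_\ell(t_{j-1})\}\big\}_n$ is a discrete-time martingale given $\mathcal{F}_T^r$ --- notably simpler than the $\{\tilde{\mathscr F}_{\uptau_1^n\wedge t_{n+1}}\vee\tilde{\mathscr F}_T^r\}$-martingale needed in Lemma~\ref{moment}, and it makes Lemma~\ref{lem:jump} unnecessary here. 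The martingale maximal inequality, followed by $\mathbb{E}\big[|B_\ell(t_j)-B_\ell(t_{j-1})|^2\mid\mathcal{F}_T^r,\mathscr F_{j-1}^h\big]=h_j$ and the linear growth of $\sigma_\ell$, contributes another term of order $\sum_j h_j\big(1+\mathbb{E}[|X_{j-1}^{h,M}|^2\mid\mathcal{F}_T^r]\big)$.

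Collecting all the bounds yields, for every $k\in\{1,\dots,n_h\}$,
\[
\mathbb{E}\Big[\max_{n\in\{0,\dots,k\}}|X_n^{h,M}|^2\,\Big|\,\mathcal{F}_T^r\Big]\le K\,\mathbb{E}\big[|X_0|^2\big]+K\Big(1+\sum_{j=1}^{k}h_j\,\mathbb{E}\big[\max_{i\in\{0,\dots,j-1\}}|X_i^{h,M}|^2\,\big|\,\mathcal{F}_T^r\big]\Big),
\]
and the discrete Gr\"onwall inequality completes the proof. The only obstacle worth flagging is bookkeeping: one must keep the conditioning on $\mathcal{F}_T^r$ consistent throughout so that the switching coefficients can legitimately be treated as deterministic; apart from that, the argument is a routine copy of the proof of Lemma~\ref{moment}, and it applies equally to the reduced scheme \eqref{eq:euler:ri}--\eqref{eq:scheme:ri} (where the switching sum is simply absent).
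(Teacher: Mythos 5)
Your proposal is correct and follows exactly the route the paper intends: the paper's own proof of this lemma is the one-line remark that it is ``similar to the proof of Lemma \ref{moment}'', and your transcription --- including the observation that the switching sum with increments $B_{\ell}(t_j)-B_{\ell}(t_{j-1})$ is now a plain $\{\mathscr F_n^h\}$-martingale (conditionally on $\mathcal{F}_T^r$) rather than one with respect to the stopped filtration $\{\tilde{\mathcal{F}}_{\uptau_1^{n}\wedge t_{n+1}}\vee\tilde{\mathcal{F}}_T^r\}$ --- is precisely the adaptation required. You in fact supply the one nontrivial detail the paper leaves implicit, and your concluding Gr\"onwall step matches the original argument.
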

\begin{proof}
	The proof of the following Lemma is similar to the proof of Lemma \ref{moment}.
\end{proof}
\begin{proof}[\textbf{Proof of Theorem \ref{thm:rate:wsi}}]
	Using Minkowski inequality and Theorem \ref{thm:rate},
	\begin{align*}
		\big\| \max_{n \in \{0, \ldots, n_h\}} |X_n - X_n^{h,M}| \big\|_{L^2(\Omega)} & \leq \big\| \max_{n \in \{0, \ldots, n_h\}} |X_n - X_n^{h}| \big\|_{L^2(\Omega)} + \big\| \max_{n \in \{0, \ldots, n_h\}} |X_n^h - X_n^{h,M}| \big\|_{L^2(\Omega)} 
		\\
		& \leq Kh + \big\| \max_{n \in \{0, \ldots, n_h\}} |X_n^h - X_n^{h,M}| \big\|_{L^2(\Omega)}.  
	\end{align*}
	Thus, it is sufficient to prove that,  
	$ \displaystyle \big\| \max_{n \in \{0, \ldots, n_h\}} |X_n^h - X_n^{h,M}| \big\|_{L^2(\Omega)} \leq Kh^{1/2}
	$.
	Due to  Equations~\eqref{eq:euler} and \eqref{eq:euler:wsi} along with Assumptions \ref{as:drift} and \ref{as:diffusion},  one can estimate
	\begin{align}
		\label{eq:e:rs}
		\mathbb{E} \big[|X_j^{h,u} - X_j^{h,u,M}|^2\big]  &\leq  K \mathbb{E} \big[\big| X_{j-1}^{h} - X_{j-1}^{h,M}\big|^2 \big] \notag
		\\
		& \quad + K  \mathbb{E} \big[\big(u_j h_j  \big|b(t_{j-1}, X_{j-1}^{h}, r_{j-1}) -  b(t_{j-1}, X_{j-1}^{h,M}, r_{j-1})\big|\big)^2 \big] \notag
		\\
		& \quad + K \mathbb{E} \Big[\big|  \sum_{\ell=1}^{\tilde{d}} \int_{t_{j-1}}^{ t_{j-1}^u} \big[\sigma_{\ell}(t_{j-1}, X_{j-1}^{h}, r_{j-1}) - \sigma_{\ell}(t_{j-1}, X_{j-1}^{h,M}, r_{j-1}) \big]  dB_{\ell} (s)\big|^2 \Big] \notag
		\\
		&\leq K \mathbb{E} \big[\big| X_{j-1}^{h} - X_{j-1}^{h,M}\big|^2 \big]
	\end{align}
	for any $j \in \{1,\ldots,n_h\}$.
	Next, the use of Equations \eqref{eq:scheme} and \eqref{eq:scheme:wsi} yields
	\begin{align*}
		& \mathbb{E}\big[\max_{n\in \{1,\ldots, k \}}\big|  X_{n}^{h} - X_{n}^{h,M} \big|^2 \big] \leq  K\mathbb{E}\Big[\max_{n\in \{1,\ldots, k \}}| \sum_{j=1}^n h_j \big[ b( t_{j-1}^u, X_{j}^{h,u}, r_{j-1}^u) - b( t_{j-1}^u, X_{j}^{h,u,M}, r_{j-1}^u)\big] \big|^2 \Big]  \notag
		\\
		& + K \mathbb{E}\Big[\max_{n\in \{1,\ldots, k \}}| \sum_{j=1}^n \sum_{\ell=1}^{\tilde{d}} \big[\sigma_{\ell}(t_{j-1}, X_{j-1}^{h}, r_{j-1}) -  \sigma_{\ell}(t_{j-1}, X_{j-1}^{h,M}, r_{j-1})\big] \big[B_{\ell}(t_{j})-B_{\ell}(t_{j-1})\big]|^2 \Big] \notag
		\\
		& 
		+ K \mathbb{E}\Big[\hspace{-.5mm} \max_{n\in \{1,\ldots, k \}}| \sum_{j=1}^n \sum_{\ell_{1},\ell_{2}=1}^{\tilde{d}}\int_{t_{j-1}}^{t_{j}} \int_{t_{j-1}}^{s} \big[ \mathcal{D}_x \sigma_{\ell_1}(t_{j-1}, X_{j-1}^{h}, r_{j-1}) \sigma_{\ell_2} (t_{j-1}, X_{j-1}^{h}, r_{j-1})  \notag
		\\
		& \quad - \mathcal{D}_x \sigma_{\ell_1}(t_{j-1}, X_{j-1}^{h,M}, r_{j-1}) \sigma_{\ell_2} (t_{j-1}, X_{j-1}^{h,M}, r_{j-1}) \big] dB_{\ell_{2}}(v)dB_{\ell_{1}}(s)|^2 \Big] \notag
		\\
		&
		+ K \mathbb{E}\Big[\max_{n\in \{1,\ldots, k \}} \Big| \sum_{j=1}^n \sum_{\ell=1}^{\tilde{d}} \mathbbm{1}_{\big\{N_{t_{j-1}}^{t_j}=1\big\}} \Big( \big\{ \sigma_{\ell}(t_{j-1},X_{j-1}^{h}, r_{j})- \sigma_{\ell}(t_{j-1},X_{j-1}^{h},  r_{j-1}) \big\}
		\\
		& \quad - \big\{ \sigma_{\ell}(t_{j-1},X_{j-1}^{h,M}, r_{j})- \sigma_{\ell}(t_{j-1},X_{j-1}^{h,M},  r_{j-1}) \big\}\Big)  \big\{ B_{\ell}(t_{j})-B_{\ell}(\uptau_{1}^{j-1})\big\} \Big|^2 \Big]
		\\
		& + K \mathbb{E}\Big[\max_{n\in \{1,\ldots, k \}} \Big| \sum_{j=1}^n \sum_{\ell=1}^{\tilde{d}} \mathbbm{1}_{\big\{N_{t_{j-1}}^{t_j}=1\big\}} \big\{ \sigma_{\ell}(t_{j-1},X_{j-1}^{h,M}, r_{j})- \sigma_{\ell}(t_{j-1},X_{j-1}^{h,M},  r_{j-1}) \big\} \big\{B_{\ell}(\uptau_{1}^{j-1}) -  B_{\ell}(t_{j-1})\big\} \Big|^2 \Big] 
	\end{align*} 
	for any $k \in \{1,\ldots,n_h\}$. 
	Observe that,  for the second last term, 
	\begin{align*}
		\Big\{\sum_{j=1}^n \sum_{\ell=1}^{\tilde{d}} \mathbbm{1}_{\big\{N_{t_{j-1}}^{t_j}=1\big\}} \big( \sigma_{\ell}(t_{j-1},X_{j-1}^{h}, r_{j}) &- \sigma_{\ell}(t_{j-1},X_{j-1}^{h},  r_{j-1})
		\\
		- \sigma_{\ell}(t_{j-1},X_{j-1}^{h,M}, r_{j})& + \sigma_{\ell}(t_{j-1},X_{j-1}^{h,M},  r_{j-1}) \big) \big\{ B_{\ell}(t_{j})-B_{\ell}(\uptau_{1}^{j-1})\big\} \Big\}_{n \in \{1, \ldots, n_h\}}
	\end{align*}
	is an $\{\tilde{\mathcal{F}}_{\uptau_1^{n}  \wedge t_{n+1}} \vee  \tilde{\mathcal{F}}_T^r\}_{n \in \{1, \ldots, n_h\}}$-martingale and for the last term,  
	\begin{align*}
		\Big\{\sum_{j=1}^n \sum_{\ell=1}^{\tilde{d}} \mathbbm{1}_{\big\{N_{t_{j-1}}^{t_j}=1\big\}} \big\{ 	\sigma_{\ell}(t_{j-1},X_{j-1}^{h,M}, r_{j})- \sigma_{\ell}(t_{j-1},X_{j-1}^{h,M},  r_{j-1}) \big\} \big\{B_{\ell}(\uptau_{1}^{j-1}) -  B_{\ell}(t_{j-1})\big\}\Big\}_{n \in \{1, \ldots, n_h\}}
	\end{align*} 
	is a martingale with respect to the filtration $\{\tilde{\mathcal{F}}_{t_{n}} \vee  \tilde{\mathcal{F}}_T^r\}_{n \in \{1, \ldots, n_h\}}$.
	Hence, the use of Assumption~\ref{as:drift} yields,
	\begin{align*} 
		& \mathbb{E}\big[\max_{n\in \{1,\ldots, k \}}\big|  X_{n}^{h} - X_{n}^{h,M} |^2 \big]  \leq  K k\sum_{j=1}^k (h_j)^2\mathbb{E}\big[|X_{j}^{h,u} - X_{j}^{h,u,M} |^2 \big] \notag
		\\
		& +   K \sum_{j=1}^k h_j  \mathbb{E}\big[ | \sigma_{\ell}(t_{j-1}, X_{j-1}^{h}, r_{j-1}) -  \sigma_{\ell}(t_{j-1}, X_{j-1}^{h,M}, r_{j-1}) |^2 \big] \notag
		\\
		& 
		+  K  \sum_{j=1}^k \sum_{\ell_{1},\ell_{2}=1}^{\tilde{d}} \int_{t_{j-1}}^{t_{j}}  \int_{t_{j-1}}^{s} \mathbb{E}\big[ |\mathcal{D}_x \sigma_{\ell_1}(t_{j-1}, X_{j-1}^{h}, r_{j-1}) \sigma_{\ell_2} (t_{j-1}, X_{j-1}^{h}, r_{j-1})  \notag
		\\
		& \quad - \mathcal{D}_x \sigma_{\ell_1}(t_{j-1}, X_{j-1}^{h,M}, r_{j-1}) \sigma_{\ell_2} (t_{j-1}, X_{j-1}^{h,M}, r_{j-1})|^2  \big]  dv ds  \notag
		\\
		& + K \sum_{j=1}^k \sum_{\ell=1}^{\tilde{d}} \mathbb{E}\Big[ \big| \big\{ \sigma_{\ell}(t_{j-1},X_{j-1}^{h}, r_{j})- \sigma_{\ell}(t_{j-1},X_{j-1}^{h},  r_{j-1}) \big\}
		\\
		& \quad - \big\{ \sigma_{\ell}(t_{j-1},X_{j-1}^{h,M}, r_{j})- \sigma_{\ell}(t_{j-1},X_{j-1}^{h,M},  r_{j-1}) \big\}\big|^2   \mathbb{E} \big[| B_{\ell}(t_{j})-B_{\ell}(\uptau_{1}^{j-1})|^2  | \tilde{\mathcal{F}}_{\uptau_1^{j-1} \wedge t_{j}} \vee \tilde{\mathcal{F}}_T^r   \big] \Big]
		\\
		& + K  \sum_{j=1}^k \sum_{\ell=1}^{\tilde{d}} \mathbb{E}\Big[  \mathbbm{1}_{\big\{N_{t_{j-1}}^{t_j}=1\big\}} \big| \sigma_{\ell}(t_{j-1},X_{j-1}^{h,M}, r_{j})- \sigma_{\ell}(t_{j-1},X_{j-1}^{h,M},  r_{j-1}) \big|^2 
		\\
		&
		\quad \times \mathbb{E} \big[| B_{\ell}(\uptau_{1}^{j-1}) -  B_{\ell}(t_{j-1})  |^2 | \tilde{\mathcal{F}}_{t_{j-1}} \vee \tilde{\mathcal{F}}_T^r   \big]  \Big] 
	\end{align*}
	for any $k \in \{1,\ldots,n_h\}$. Due to Equation \eqref{eq:e:rs}, Remarks \ref{rem:D}, \ref{rem:growth} and Assumption \ref{as:diffusion}
	\begin{align*}
		\mathbb{E}\big[\max_{n\in \{1,\ldots, k \}}\big|  X_{n}^{h}  - X_{n}^{h,M} |^2 \big]  \leq  K h \sum_{j=1}^k \mathbb{E}\big[ | X_{n}^{h} - X_{n}^{h,M}|^2 \big]+ K  h \sum_{j=1}^k  \mathbb{E}\Big[  \mathbbm{1}_{\big\{N_{t_{j-1}}^{t_j}=1\big\}} \Big( 1 + \mathbb{E} \big[ | X_{n}^{h,M}|^2| \mathcal{F}_T^r \big] \Big) \Big] 
	\end{align*}
	which on using  Lemma \ref{lem:jump} and \ref{lem:moment:m} gives
	\begin{align*} 
		\mathbb{E}\big[ \max_{n\in \{1,\ldots, k \}}\big|  X_{n}^{h} - X_{n}^{h,M} |^2 \big]  \leq Kh + K h \sum_{j=1}^k \mathbb{E}\big[ | X_{n}^{h} - X_{n}^{h,M}|^2 \big]  
	\end{align*}
	for any $k \in \{1,\ldots,n_h\}$. 
	Finally,   discrete Gr\"onwall's inquality leads to, 
	\begin{align*}
		\mathbb{E}\big[ \max_{n\in \{1,\ldots,  k \}}\big|  X_{n}^{h} - X_{n}^{h,M} |^2 \big]  \leq Kh e^{h k} \leq Kh 
	\end{align*}
	and thus the proof is completed by taking square-root on both sides. 
\end{proof}

\begin{proof}[\textbf{Proof of Theorem \ref{thm:rate:ri}}]
	By the application of  Minkowski inequality and Theorem \ref{thm:rate},
	\begin{align*}
		\big\| \max_{n \in \{0, \ldots, n_h\}} |X_n - X_n^{h,R}| \big\|_{L^2(\Omega)} & \leq \big\| \max_{n \in \{0, \ldots, n_h\}} |X_n - X_n^{h}| \big\|_{L^2(\Omega)} + \big\| \max_{n \in \{0, \ldots, n_h\}} |X_n^h - X_n^{h,R}| \big\|_{L^2(\Omega)} 
		\\
		& \leq Kh + \big\| \max_{n \in \{0, \ldots, n_h\}} |X_n^h - X_n^{h,R}| \big\|_{L^2(\Omega)}. 
	\end{align*}
	From  Equations \eqref{eq:euler} and \eqref{eq:euler:ri},  one uses  Assumptions \ref{as:drift} and \ref{as:diffusion} to obtain, 
	\begin{align}
		\label{eq:e:ri}
		& \mathbb{E} \big[|X_j^{h,u} - X_j^{h,u,R}|^2\big]  \leq  K \mathbb{E} \big[\big| X_{j-1}^{h} - X_{j-1}^{h,R}\big|^2 \big]  + K \mathbb{E} \big[(u_j  h_j \big|b(t_{j-1}, X_{j-1}^{h}, r_{j-1}) -  b(t_{j-1}, X_{j-1}^{h,R}, r_{j-1})\big|\big)^2 \big] \notag
		\\
		& + K \mathbb{E} \hspace{-0.5mm} \Big[\big| \hspace{-0.5mm}  \sum_{\ell=1}^{\tilde{d}} \int_{t_{j-1}}^{ t_{j-1}^u}\hspace{-0.5mm}  \big[\sigma_{\ell}(t_{j-1}, X_{j-1}^{h}, r_{j-1}) \hspace{-0.5mm} - \hspace{-0.5mm}  \sigma_{\ell}(t_{j-1}, X_{j-1}^{h,R}, r_{j-1}) \big]  dB_{\ell} (s)\big|^2 \Big] \hspace{-0.5mm}  \leq \hspace{-0.5mm} K \mathbb{E} \big[\big| X_{j-1}^{h} - X_{j-1}^{h,R}\big|^2 \big]
	\end{align}
	for any $j \in \{1,\ldots,n_h\}$.
	Moreover,    from  Equations \eqref{eq:scheme} and \eqref{eq:scheme:ri}, 
	\begin{align*}
		& \mathbb{E}\big[\max_{n\in \{1,\ldots, k \}}\big|  X_{n}^{h} - X_{n}^{h,R} \big|^2 \big] \leq  K\mathbb{E}\Big[\max_{n\in \{1,\ldots, k \}}| \sum_{j=1}^n h_j \big[ b( t_{j-1}^u, X_{j}^{h,u}, r_{j-1}^u) - b( t_{j-1}^u, X_{j}^{h,u,R}, r_{j-1}^u)\big] \big|^2 \Big]  \notag
		\\
		& + K \mathbb{E}\Big[\max_{n\in \{1,\ldots, k \}}| \sum_{j=1}^n \sum_{\ell=1}^{\tilde{d}} \big[\sigma_{\ell}(t_{j-1}, X_{j-1}^{h}, r_{j-1}) -  \sigma_{\ell}(t_{j-1}, X_{j-1}^{h,R}, r_{j-1})\big] \big[B_{\ell}(t_{j})-B_{\ell}(t_{j-1})\big]|^2 \Big] \notag
		\\
		& 
		+ K \mathbb{E}\Big[\hspace{-.5mm} \max_{n\in \{1,\ldots, k \}}| \sum_{j=1}^n \sum_{\ell_{1},\ell_{2}=1}^{\tilde{d}}\int_{t_{j-1}}^{t_{j}} \int_{t_{j-1}}^{s} \big[ \mathcal{D}_x \sigma_{\ell_1}(t_{j-1}, X_{j-1}^{h}, r_{j-1}) \sigma_{\ell_2} (t_{j-1}, X_{j-1}^{h}, r_{j-1})  \notag
		\\
		& \quad - \mathcal{D}_x \sigma_{\ell_1}(t_{j-1}, X_{j-1}^{h,R}, r_{j-1}) \sigma_{\ell_2} (t_{j-1}, X_{j-1}^{h,R}, r_{j-1}) \big] dB_{\ell_{2}}(v)dB_{\ell_{1}}(s)|^2 \Big] \notag
		\\
		&
		+ K \mathbb{E}\Big[\max_{n\in \{1,\ldots, k \}} \Big| \sum_{j=1}^n \sum_{\ell=1}^{\tilde{d}} \mathbbm{1}_{\big\{N_{t_{j-1}}^{t_j}=1\big\}}  \big\{ \sigma_{\ell}(t_{j-1},X_{j-1}^{h}, r_{j})- \sigma_{\ell}(t_{j-1},X_{j-1}^{h},  r_{j-1}) \big\}\big\{ B_{\ell}(t_{j})-B_{\ell}(\uptau_{1}^{j-1})\big\} \Big|^2 \Big]
	\end{align*} 
	and notice that 
	$$
	\Big\{\sum_{j=1}^n \sum_{\ell=1}^{\tilde{d}} \mathbbm{1}_{\big\{N_{t_{j-1}}^{t_j}=1\big\}} \big\{ \sigma_{\ell}(t_{j-1},X_{j-1}^{h}, r_{j}) - \sigma_{\ell}(t_{j-1},X_{j-1}^{h},  r_{j-1})\big\}\big\{ B_{\ell}(t_{j})-B_{\ell}(\uptau_{1}^{j-1})\big\} \Big\}_{n \in \{1, \ldots, n_h\}}
	$$ is an $\{\tilde{\mathcal{F}}_{\uptau_1^{n}  \wedge t_{n+1}} \vee  \tilde{\mathcal{F}}_T^r\}_{n \in \{1, \ldots, n_h\}}
	$-martingale.
	Hence,  Assumption \ref{as:drift} yields,
	\begin{align*} 
		\mathbb{E}\big[\max_{n\in \{1,\ldots, k \}}\big|  X_{n}^{h}& - X_{n}^{h,R} |^2 \big]  \leq  K k\sum_{j=1}^k (h_j)^2\mathbb{E}\big[|X_{j}^{h,u} - X_{j}^{h,u,R} |^2 \big] \notag
		\\
		& +   K \sum_{j=1}^k h_j  \mathbb{E}\big[ | \sigma_{\ell}(t_{j-1}, X_{j-1}^{h}, r_{j-1}) -  \sigma_{\ell}(t_{j-1}, X_{j-1}^{h,R}, r_{j-1}) |^2 \big] \notag
		\\
		& 
		+  K  \sum_{j=1}^k \sum_{\ell_{1},\ell_{2}=1}^{\tilde{d}} \int_{t_{j-1}}^{t_{j}}  \int_{t_{j-1}}^{s} \mathbb{E}\big[ |\mathcal{D}_x \sigma_{\ell_1}(t_{j-1}, X_{j-1}^{h}, r_{j-1}) \sigma_{\ell_2} (t_{j-1}, X_{j-1}^{h}, r_{j-1})  \notag
		\\
		& \qquad \qquad- \mathcal{D}_x \sigma_{\ell_1}(t_{j-1}, X_{j-1}^{h,R}, r_{j-1}) \sigma_{\ell_2} (t_{j-1}, X_{j-1}^{h,R}, r_{j-1})|^2  \big]  dv ds  \notag
		\\
		& + K \sum_{j=1}^k \sum_{\ell=1}^{\tilde{d}} \mathbb{E}\Big[\mathbbm{1}_{\big\{N_{t_{j-1}}^{t_j}=1\big\}} \big| \big\{ \sigma_{\ell}(t_{j-1},X_{j-1}^{h}, r_{j})- \sigma_{\ell}(t_{j-1},X_{j-1}^{h},  r_{j-1}) \big\} \big|^2
		\\
		& \qquad \qquad \times \big[| B_{\ell}(t_{j})-B_{\ell}(\uptau_{1}^{j-1})|^2  | \tilde{\mathcal{F}}_{\uptau_1^{j-1} \wedge t_{j}} \vee \tilde{\mathcal{F}}_T^r   \big] \Big]
	\end{align*}
	for any $k \in \{1,\ldots,n_h\}$. 
	Further,   on using  Equation \eqref{eq:e:ri}, Remarks \ref{rem:D}, \ref{rem:growth} and Assumption \ref{as:diffusion}, one obtains, 
	\begin{align*}
		\mathbb{E}\big[\max_{n\in \{1,\ldots, k \}}\big|  X_{n}^{h}  - X_{n}^{h,R} |^2 \big]  & \leq  K h \sum_{j=1}^k \mathbb{E}\big[ | X_{n}^{h} - X_{n}^{h,R}|^2 \big]
		\ + K  h \sum_{j=1}^k  \mathbb{E}\Big[  \mathbbm{1}_{\big\{N_{t_{j-1}}^{t_j}=1\big\}} \Big( 1 + \mathbb{E} \big[ | X_{n}^{h}|^2 | \mathcal{F}_T^r \big] \Big) \Big] 
	\end{align*}
	and thus Lemmas  \ref{moment} and  \ref{lem:jump} gives, 
	\begin{align*} 
		\mathbb{E}\big[ \max_{n\in \{1,\ldots, k \}}\big|  X_{n}^{h} - X_{n}^{h,R} |^2 \big] & \leq Kh + K h \sum_{j=1}^k \mathbb{E}\big[ | X_{n}^{h} - X_{n}^{h,R}|^2 \big]  \label{eq:E:RI}
	\end{align*}
	for any $k \in \{1,\ldots,n_h\}$.
	Finally, applying discrete Gr\"onwall's inequality  and taking square root on both sides completes the proof.
\end{proof}

\section{Numerical Implementation} 	
\label{sec:numerical}
The randomized Milstein scheme \eqref{eq:scheme} involves the simulation of L\'evy area given by 
$
\displaystyle \int_{t_{j-1}}^{t_{j}} \int_{t_{j-1}}^{s}dB_{\ell_{2}}(v)dB_{\ell_{1}}(s)
$
for $\ell_1\neq \ell_2 \in \{1, \ldots, \tilde{d}\}$ which is difficult to simulate. 
However,  under the following  diffusion commutative conditions 
\begin{equation}
	\mathcal{D}_x \sigma_{\ell_1}(t,x,i_0) \  \sigma_{\ell_2} (t,x,i_0) = \mathcal{D}_x \sigma_{\ell_2}(t,x,i_0) \ \sigma_{\ell_1} (t,x,i_0) \label{eq:commutative}
\end{equation}
for all  $x \in \mathbb{R}^d$, $t \in [0,T]$, $i_0 \in S$  and  $l_1,l_2 \in \{1,\ldots,\tilde{d}\}$,  the term involving iterated Brownian integrals can be simplified \citep[see][Chapter 10, Equation (3.6)]{kloeden1992}, which leads to the following easily implementable Milstein scheme, 
\begin{align}
	X_{j}^{h, u} & = X_{j-1}^{h} + b(t_{j-1}, X_{j-1}^h, r_{j-1}) h_j u_j + \sum_{\ell=1}^{\tilde{d}} \sigma_{\ell}(t_{j-1}, X_{j-1}^h, r_{j-1}) \{B_{l}(t_{j-1}^u)-B_l(t_{j-1})\}\label{eq:euler:com}
	\\
	X_{j}^{h} & = X_{j-1}^{h}+ b( t_{j-1}^u, X_{j}^{h,u}, r_{j-1}^u) h_j  +  \sum_{\ell=1}^{\tilde{d}} \sigma_{\ell}(t_{j-1}, X_{j-1}^h, r_{j-1})\{B_{\ell}(t_{j})-B_{\ell}(t_{j-1})\} \notag
	\\
	& 
	\quad + \frac{1}{2} \sum_{\ell_{1},\ell_{2}=1}^{\tilde{d}} \mathcal{D}_x \sigma_{\ell_1}(t_{j-1}, X_{j-1}^h, r_{j-1}) \sigma_{\ell_2} (t_{j-1}, X_{j-1}^h, r_{j-1})   \notag
	\\
	& \qquad \qquad \times \Big\{ ( B_{\ell_2}(t_{j})-B_{\ell_2}(t_{j-1})) (B_{\ell_1}(t_{j})-B_{\ell_1}(t_{j-1})) -  \mathbbm{1}_{\{\ell_1 = \ell_2\}}(t_{j}-t_{j-1})\Big\} \notag
	\\
	&
	\quad + \sum_{\ell=1}^{\tilde{d}} \mathbbm{1}_{\big\{N_{t_{j-1}}^{t_j}=1\big\}} \big\{ \sigma_{\ell}(t_{j-1},X_{j-1}^h, r_{j})- \sigma_{\ell}(t_{j-1},X_{j-1}^h,  r_{j-1}) \big\} \big\{ B_{\ell}(t_{j})-B_{\ell}(\tau_{1}^{j-1})\big\}  \label{eq:scheme:com}
\end{align}
almost surely for any $j \in \{1, \ldots, n_h\}$.  
For  one-dimensional  Brownian motion, the commutative condition is automatically satisfied and the L\'evy area issue does not arise. 
We also refer the readers to \cite{davie2015} and the references therein for the simulation of L\'evy area arising in the case of standard SDEs. 

For the simulation of the Markov chain  $r$,   we refer the readers to \cite[Section 2.4]{Yin2012}. 
Indeed,  if $\imath$ is the current state of the chain $r$,  then the switching  time to the next state follows exponential distribution with parameter $-\mathfrak{q}_{\imath \imath}$. 
Further,  the new state is determined by the random variable $Z_{\imath}$ given~by, 
\begin{align*}
	Z_{\imath} = 
	\begin{cases}
		1, &\text{ if } U \leq \mathfrak{q}_{\imath1}/(-\mathfrak{q}_{\imath \imath}),
		\\
		2, &\text{ if } \mathfrak{q}_{\imath1}/(-\mathfrak{q}_{\imath \imath}) < U \leq (\mathfrak{q}_{\imath 1} + \mathfrak{q}_{\imath 2})/(-\mathfrak{q}_{\imath\imath}),\\
		\vdots & \vdots\\
		m', &\text{ if } \displaystyle{\sum_{\CMjmath \neq \imath,  \CMjmath < m'}} \mathfrak{q}_{\imath \CMjmath }/(-\mathfrak{q}_{\imath \imath}) \leq U. 
	\end{cases}
\end{align*}
where $U$ is a standard uniform random variable.

Moreover, for the simulation of randomization component from Equation \eqref{eq:euler},  we generate independent standard uniform random variables $\{u_j\}_{j \in \mathbb{N}}$ to be used on the finest discretization grid.  
On subsequent coarser grids, we recursively select the uniform random variables based on Bernoulli ($0.5$) distribution.

Further, we demonstrate numerically the theoretical findings of Theorems \ref{thm:rate}, \ref{thm:rate:wsi} and \ref{thm:rate:ri}.
We also mention that these results hold even if the randomization in the drift coefficient is not carried out, \textit{i.e.}, when $u_j\equiv 0$ for all $j\in \mathbb{N}$ in Equation \eqref{eq:euler}.
In such a case, the randomized Milstein scheme $X^{h}$ in Equation \eqref{eq:scheme} reduces to the Milstein scheme $X^{h,C}$ studied in \cite{nguyen2017milstein}, see also \cite{kumar2020explicit,kumar2021note, kumar2022}.  
Unlike the Milstein scheme, the randomized Milstein scheme does not require the drift to be differentiable. 
Indeed, it is only assumed to be Lipschitz continuous in the state variable. 
Also, the $L^2$-error of the randomized Milstein scheme is numerically shown to be less than that of the Milstein scheme of \cite{nguyen2017milstein} but at a higher computational cost, see Figures \ref{fig:com:error},\ref{fig:com:cpu} and Table \ref{tab:com}. Thus, the randomized Milstein scheme is recommended when the drift coefficient is non-differentiable and Lipschitz continuous. However, for more regular drift coefficients, one should use the Milstein scheme of \cite{nguyen2017milstein,kumar2021note}.
Moreover, when the state space of the Markov chain $r$ is a singleton set, the above observations coincide with that of  \cite{Kruse2019}.  
It is also illustrated that both the modified randomized scheme given in Equations \eqref{eq:euler:wsi} and  \eqref{eq:scheme:wsi} and the reduced randomized scheme from Equations \eqref{eq:euler:ri} and \eqref{eq:scheme:ri} performs better than the classical Euler scheme in terms of their errors. Indeed, this is exhibited with non-randomization (when $u_j\equiv 0$ for all $j \in \mathbb{N}$) as well as with randomization (when $u_j \sim U(0,1)$ for all $j \in \mathbb{N}$), see Figures \ref{fig:inc:swi:error},\ref{fig:inc:swi:cpu} and Table \ref{tab:comphalf}. Thus, the Euler scheme can be used for Lipschitz continuous drift and diffusion coefficients. However, the non-randomized modified and reduced schemes can be preferred over their randomized versions as well as the classical Euler schemes when diffusion coefficient is aditionally assumed to be differentiable.

\begin{example}
	\label{ex:1} 
Consider the following one-dimensional SDEwMS,  
\begin{align*}
	dX(t) = b (t, X(t),r(t)) dt + \sigma(t,X(t),r(t))dB(t)
\end{align*}
almost surely for any $t \in [0,1]$ with initial values $X(0)=1$ and $ r(0)=1$ where  
\begin{align*}
	b(t, x, \imath) =  
	\begin{cases}
		|x|,  & \mbox{ if } \imath=0, \\
		\sin |x|, & \mbox{ if }  \imath=1,
	\end{cases} \quad \mbox{ and } \quad \sigma(t, x, \imath) =  
	\begin{cases}
		x,   & \mbox{ if } \imath=0,  \\
		\sin x, & \mbox{ if }  \imath=1, 
	\end{cases} 
\end{align*}
for any $x \in \mathbb{R}$, $t \in [0,1]$ and $\imath \in S$.
Also,  we take state space $S=\{0,1\}$ and generator 
$
\mathcal{Q} = \begin{pmatrix}
	-0.5 & 0.5 \\
	0.5 & -0.5
\end{pmatrix}
$. 
Clearly,  $b$ is a non-differentiable function and satisfies Assumption \ref{as:drift}, and $\sigma$ satisfies Assumption \ref{as:diffusion} and the commutative condition \eqref{eq:commutative}. 
A comparison of $L^2$-error and CPU time of the randomized Milstein scheme $X^{h}$  from  Equations \eqref{eq:euler:com} and \eqref{eq:scheme:com}  with the Milstein scheme  $X^{h, C}$ from \cite[Equation (3.3)]{nguyen2017milstein} is given in Figures \ref{fig:com:error},\ref{fig:com:cpu} and Table~\ref{tab:com}. 
In both cases, the scheme with step-size $h_j=2^{-15}$ is taken to be the true solution of the above SDEwMS and the simulation is performed with $160000$ number of paths. 
Clearly,  the scheme  $X^{h}$ exhibits the rate of convergence as $1.0$ and hence,  the result of Theorem \ref{thm:rate} is justified.

\begin{figure}[h]
	\centering
	\begin{subfigure}[]{\textwidth}  
		\centering
		\includegraphics[width=0.85\textwidth]{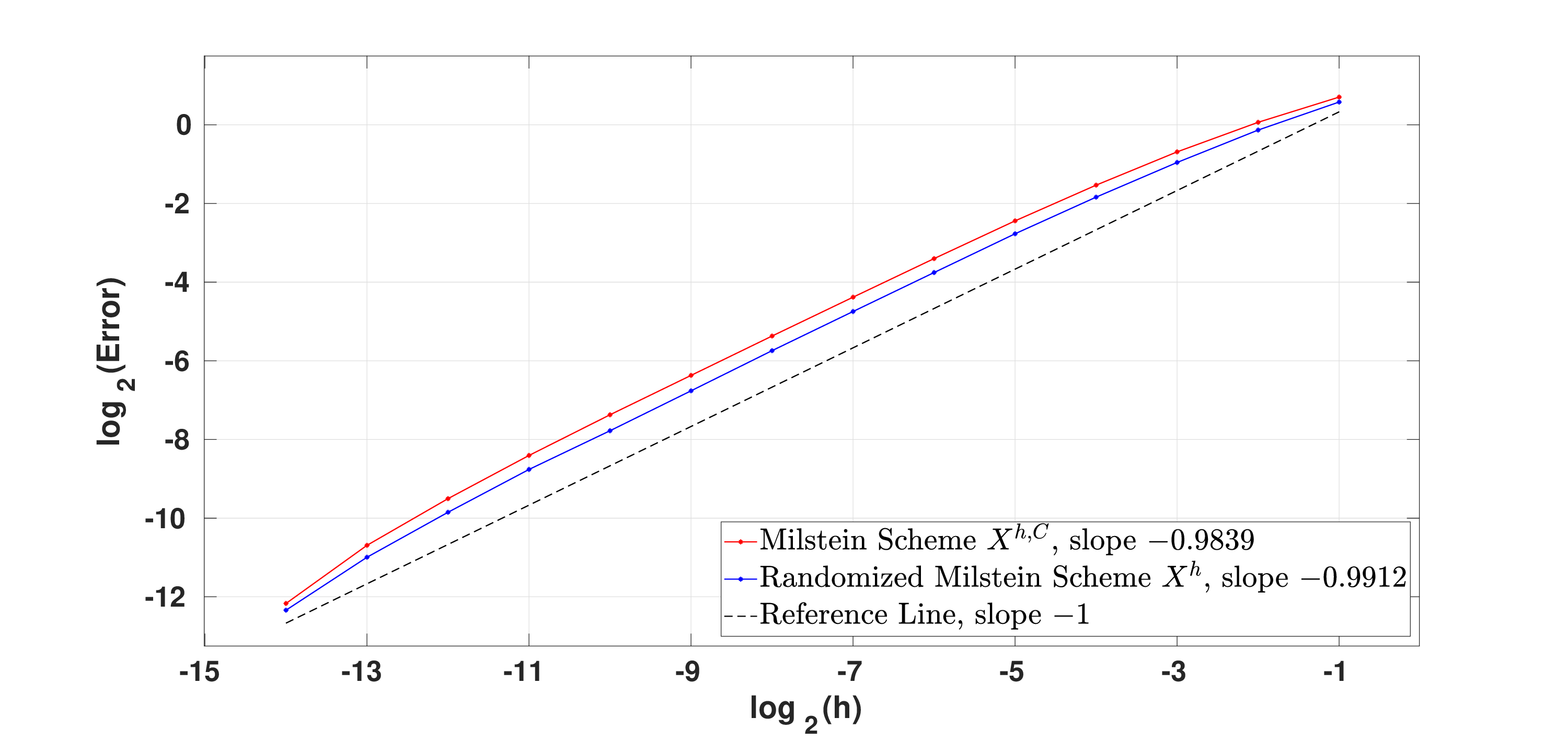}
		\caption{Error comparison}
		\label{fig:com:error}
	\end{subfigure}
	\begin{subfigure}[b]{\textwidth}  
		\centering
		\includegraphics[width=0.85\textwidth]{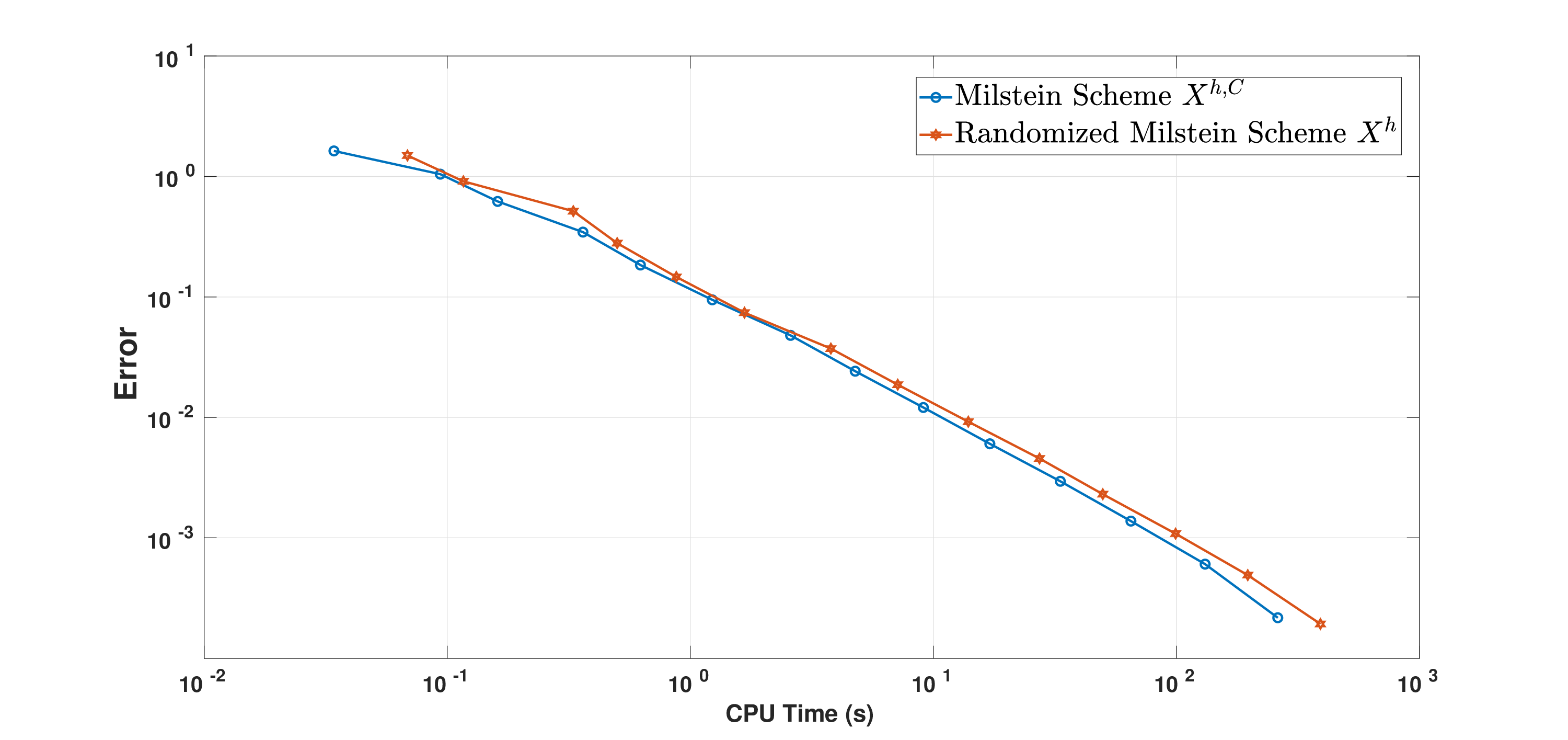}
		\caption{CPU Time comparison}
		\label{fig:com:cpu}
	\end{subfigure}
	
	\caption{Randomized Milstein Scheme vs. Milstein Scheme}
	\label{fig:comparison}
\end{figure}
\FloatBarrier
\begin{table}[h]
	\centering
	\begingroup
	\footnotesize
	\caption{Errors and CPU Time of the Randomized Milstein Scheme and the Milstein Scheme}
	\begin{tabular}{|c|ccccccc|}
		\hline
		\noalign{\vspace{1.5pt}} 
		$h$         & $2^{-14}$ & $2^{-13}$ & $2^{-12}$ & $2^{-11}$ & $2^{-10}$ & $2^{-9}$ & $2^{-8}$
		\\
		\hline 
		$\|X_{n_h} - X_{n_h}^{h,C} \|_{L^2(\Omega)}$ & 
		$0.00021708$ & $0.00060482$ & $0.00137690$ & $0.00294788$ & $0.00603756$ & $0.01208470$ & $0.02417852$  
		\\
		CPU Time & $261.470582$ & $131.341813$ & $65.035875$ & $33.325486$ & $17.089388$ & $9.102217$ & $4.772586$
		\\
		\hline
		$\|X_{n_h} - X_{n_h}^h \|_{L^2(\Omega)}$ & 
		$0.00019280$ & $0.00049020$ & $0.00108269$ & $0.00230352$ & $0.00455661$ & $0.00920727$ & $0.01868714$ 
		\\
		CPU Time & $391.632137$ & $196.778117$ & $99.305244$ & $49.821193$ & $27.357823$ & $13.934358$ & $7.140480$ 
		\\
		\hline
		\noalign{\vspace{2pt}}
		\hline
		\noalign{\vspace{1.5pt}} 
		$h$         & $2^{-7}$ & $2^{-6}$ & $2^{-5}$ & $2^{-4}$ & $2^{-3}$ & $2^{-2}$ & $2^{-1}$ 
		\\
		\hline
		$\|X_{n_h} - X_{n_h}^{h,C} \|_{L^2(\Omega)}$ & $0.04796301$ & $0.09464937$ & $0.18408339$ & $0.34538574$ & $0.62091218$ & $1.04555606$ & $1.63042997$ 
		\\
		CPU Time & $2.587062$ & $1.232459$ & $0.623805$ & $0.361891$ & $0.161321$ & $0.093668$ & $0.034194$ 
		\\
		\hline
		$\|X_{n_h} - X_{n_h}^h \|_{L^2(\Omega)}$ & $0.03726680$ & $0.07402992$ & $0.14663204$ & $0.27959858$ & $0.51513882$ & $0.91186019$ & $1.49448935$
		\\
		CPU Time & $3.790610$ & $1.671852$ & $0.876138$ & $0.500450$ & $0.330213$ & $0.116581$ & $0.068625$ 
		\\
		\hline
	\end{tabular}
	\label{tab:com}
	\endgroup
\end{table} 
\FloatBarrier
\begin{remark}
	The Milstein scheme need the drift coefficients to be differentiable to achieve rate of convergence $1.0$. It is observed in Figure \ref{fig:com:error} that the classical Milstein scheme shows similar convergence results as the randomized Milstein scheme at a less computational cost, while having non-differentiable drift coefficient.   
	 We are unable to find an example of SDEwMS with non-differentiable drift coefficient for which one could observe the difference of the experimental orders of convergence. However, there is no result in the literature which guarantees the convergence rate $1.0$ of the Milstein scheme for SDEwMS with non-differentiable drift coefficients.
\end{remark}

We now proceed for the illustration of results obtained in Theorems \ref{thm:rate:wsi} and \ref{thm:rate:ri}. 
Indeed, the rate of convergence of the modified randomized scheme $X^{h,M}$ from Equations \eqref{eq:euler:wsi} and \eqref{eq:scheme:wsi} and that of the reduced randomized scheme $X^{h,R}$ from Equations \eqref{eq:euler:ri} and \eqref{eq:scheme:ri} are shown to be $1/2$. These two schemes are considered to demonstrate the importance of  the last term on the right side of Equation~\eqref{eq:scheme}. 
Further, it is also natural to compare the schemes $X^{h,M}$ and $X^{h,R}$ with the classical Euler scheme $X^{h,E}$ and with the non-randomized modified scheme $X^{h,MC}$ and the non-randomized reduced scheme $X^{h,RC}$ which are defined through $X^{h,M}$ and $X^{h,R}$, respectively by taking $u_j\equiv 0$ for all $j\in\mathbb{N}$. 
The scheme with a step-size $h_j=2^{-18}$ is considered the true solution for all the above schemes and the simulation is carried out with $80000$ number of paths.
Figure \ref{fig:inc:swi:error} and Table \ref{tab:comphalf} illustrate that the rate of convergence of the schemes $X^{h,E}$,  $X^{h,M}$,  $X^{h,R}$,   $X^{h,MC}$ and  $X^{h,RC}$ is  $1/2$.
At last, Figure \ref{fig:inc:swi:cpu} and Table \ref{tab:comphalf}, presents a comparison of computational efficiency of all the above half-order schemes. 
Thus, non-randomized modified and reduced schemes perform better than Euler scheme and their randomized versions.

\begin{figure}[h]
	\centering
	\begin{subfigure}[b]{\textwidth}  
	\centering
	\includegraphics[width=0.85\textwidth]{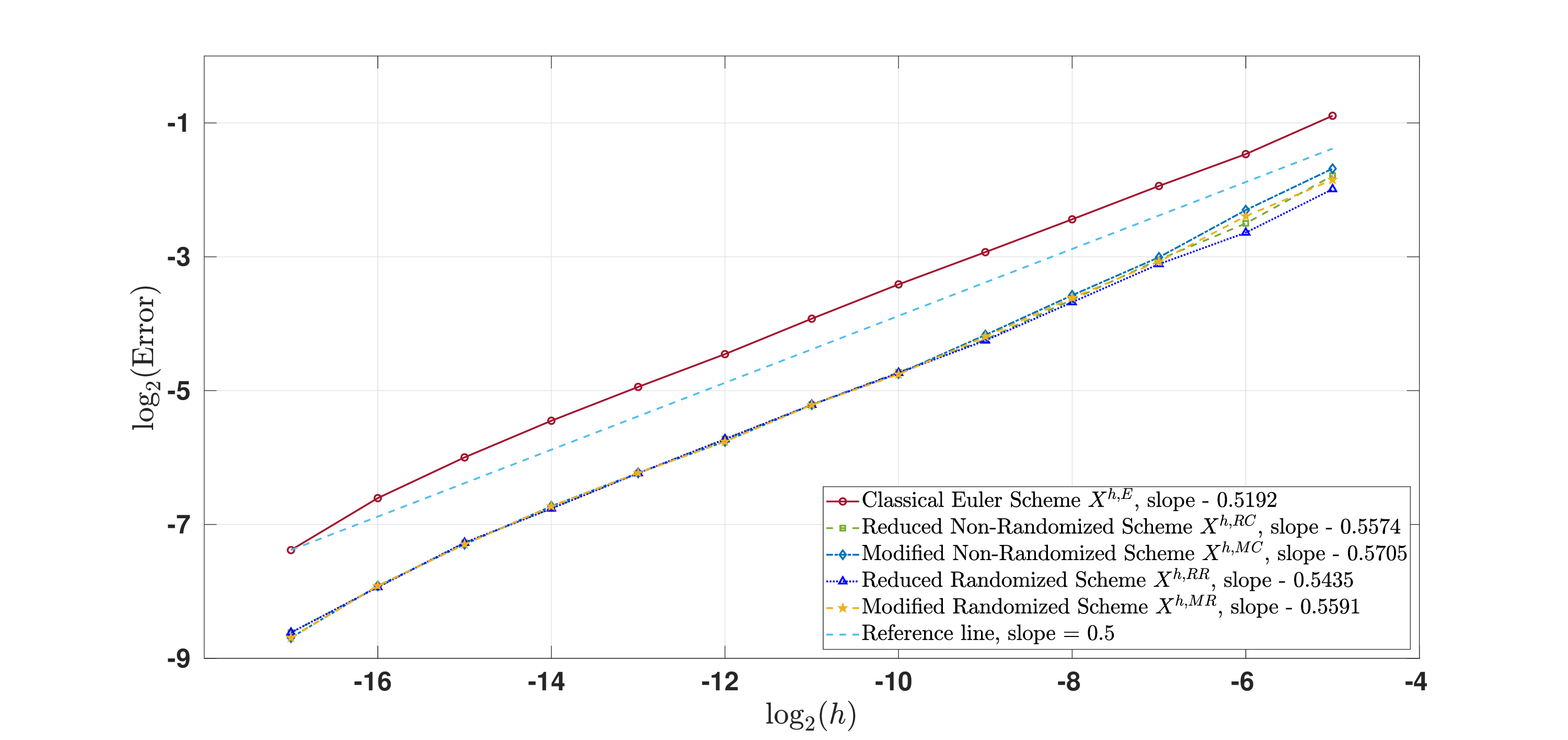}
	\caption{Error comparison} 
	\label{fig:inc:swi:error}
	\end{subfigure}
\end{figure}
\begin{figure}[h]
	\ContinuedFloat
	\centering
	\begin{subfigure}[]{\textwidth}  
	\centering	
	\includegraphics[width=0.85\textwidth]{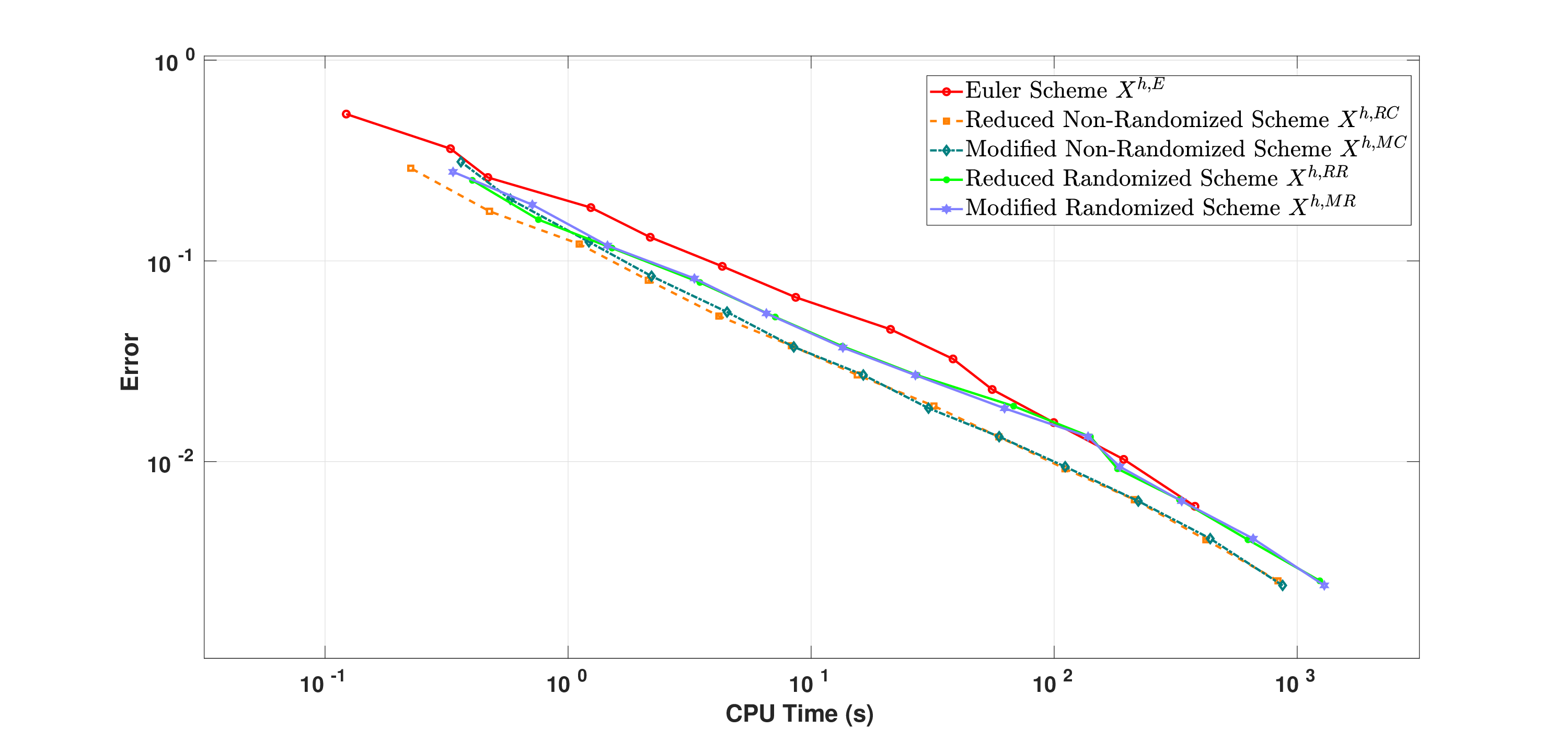} 
	\caption{CPU Time comparison}
	\label{fig:inc:swi:cpu}
	\end{subfigure}
\caption{Randomized and Non-randomized Modified and Reduced Schemes vs. Classical Euler Schemes.}
\label{fig:halfcom}
\end{figure}  
\FloatBarrier
\begin{table}[H]
	\caption{Errors and CPU Time of Half-order Schemes.}
	\centering
	\begingroup
	\footnotesize
	\begin{tabular}{@{}|c|cccccc|@{}}
		\hline
		\noalign{\vspace{1.5pt}} 
		$h$ & $2^{-17}$ & $2^{-16}$ & $2^{-15}$ & $2^{-14}$ & $2^{-13}$ & $2^{-12}$  
		\\ 
		\hline
		$\|X_{n_h} - X_{n_h}^{h,E}\|_{L^2(\Omega)}$ &  $0.00599139$ & $0.01026297$ & $0.01565156$ & $0.02287485$ & $0.03247471$ & $0.04560142$
		\\ 
		CPU Time & $379.549801$ & $193.294414$ & $99.640363$ & $55.591084$ & $38.355161$ & $21.217252$
		\\
		\hline 
		$\|X_{n_h} - X_{n_h}^{h,RC}\|_{L^2(\Omega)}$ & $0.00254758$ & $0.00408114$ & $0.00646238$ & $0.00921385$ & $0.01328638$ & $0.01892801$   
		\\
		CPU Time & $833.698372$ & $421.096085$ & $213.971714$ & $110.782109$ & $59.234410$ & $32.016569$  
		\\
		\hline 
		$\|X_{n_h} - X_{n_h}^{h,MC}\|_{L^2(\Omega)}$ & $0.00241768$ & $0.00413479$ & $0.00636003$ & $0.00943673$ & $0.01331740$ & $0.01846128$  
		\\
		CPU Time & $871.231586$ & $438.688588$ & $221.942274$ & $111.028098$ & $59.387437$ & $30.414296$  
		\\
		\hline 
		$\|X_{n_h} - X_{n_h}^{h,R}\|_{L^2(\Omega)}$ & $0.00254793$ & $0.00408174$ & $0.00646433$ & $0.00921364$ & $0.01328279$ & $0.01889861$   
		\\
		CPU Time & $1239.085105$ & $627.439369$ & $327.828209$ & $182.257985$ & $141.139272$ & $68.163751$ 
		\\
		\hline 
		$\|X_{n_h} - X_{n_h}^{h,M}\|_{L^2(\Omega)}$ & $0.00241773$ & $0.00413333$ & $0.00635909$ & $0.00943438$ & $0.01331625$ & $0.01843175$  
		\\
		CPU Time & $1292.855992$ & $658.936470$ & $334.851178$ & $186.414961$ & $137.908831$ & $62.501852$ 
		\\
		\hline
	\end{tabular}
	
	\vspace{0.2cm} 
	
	\begin{tabular}{@{}|c|ccccccc|@{}}
		\hline
		\noalign{\vspace{1.5pt}} 
		$h$ & $2^{-11}$ & $2^{-10}$ & $2^{-9}$ & $2^{-8}$ & $2^{-7}$ & $2^{-6}$ & $2^{-5}$ 
		\\ 
		\hline
		$\|X_{n_h} - X_{n_h}^{h,E}\|_{L^2(\Omega)}$ & $0.06583836$ & $0.09392476$ & $0.13115728$ & $0.18428536$ & $0.26035255$ & $0.36210007$ & $0.53869722$ 
		\\ 
		CPU Time &  $8.644915$ & $4.310352$ & $2.178243$ & $1.241116$ & $0.467730$ & $0.328214$ & $0.122277$ 
		\\
		\hline 
		$\|X_{n_h} - X_{n_h}^{h,RC}\|_{L^2(\Omega)}$ & $0.02703500$ & $0.03779432$ & $0.05309653$ & $0.08012178$ & $0.12137365$ & $0.17662309$ & $0.28952555$ 
		\\
		CPU Time & $15.479152$ & $8.300158$ & $4.174976$ & $2.137459$ & $1.111581$ & $0.474894$ & $0.225357$ 
		\\
		\hline 
		$\|X_{n_h} - X_{n_h}^{h,MC}\|_{L^2(\Omega)}$ & $0.02701819$ & $0.03726813$ & $0.05561194$ & $0.08391667$ & $0.12436160$ & $0.20276714$ & $0.31159843$ 
		\\ 
		CPU Time & $16.398693$ & $8.485918$ & $4.510199$ & $2.204941$ & $1.218397$ & $0.578837$ & $0.362392$ 
		\\
		\hline 
		$\|X_{n_h} - X_{n_h}^{h,R}\|_{L^2(\Omega)}$ & $0.02696280$ & $0.03754892$ & $0.05253226$ & $0.07807442$ & $0.11588008$ & $0.16053614$ & $0.25189485$ 
		\\ 
		CPU Time & $27.278655$ & $13.447136$ & $7.119449$ & $3.486635$ & $1.515935$ & $0.755092$ & $0.403815$
		\\
		\hline 
		$\|X_{n_h} - X_{n_h}^{h,M}\|_{L^2(\Omega)}$ & $0.02695261$ & $0.03700100$ & $0.05474560$ & $0.08171425$ & $0.11913306$ & $0.19038216$ & $0.27754029$ 
		\\
		CPU Time  & $26.865898$ & $13.511409$ & $6.545521$ & $3.310891$ & $1.452824$ & $0.712814$ & $0.336374$ 
		\\
		\hline 
	\end{tabular}
	\label{tab:comphalf}
	\endgroup
\end{table}    

\FloatBarrier

\end{example} 

In the following examples, we demonstrate the results of Theorem \ref{thm:rate:wsi} and \ref{thm:rate:ri} for their non-randomized versions as explained in the Remark \ref{rem:Half}. The both examples \ref{ex:mrp} and \ref{ex:bs} illustrate that the non-randomized half-order modified and reduced schemes are more efficient than the Euler scheme for SDEwMS.
\begin{example}
	\label{ex:mrp}
	Consider the mean reverting process with regime switching from \cite[Chapter 10]{mao2006} defined as
	\begin{align*}
	dX(t) &= \lambda(r(t))(\mu(r(t))-X(t))dt + \sigma(r(t))X(t) dB(t)
	\end{align*}
	almost surely for any $ t \in [0,1]$ with initial values $S(0)=1$ and $r(0)=1$ where 
	\begin{align*}
	\lambda(\imath) =  
	\begin{cases}
	0.5,  & \mbox{ if } \imath=0, \\
	2, & \mbox{ if }  \imath=1,
	\end{cases}, \quad
	\mu(\imath) =  
	\begin{cases}
	2,   & \mbox{ if } \imath=0,  \\
	1, & \mbox{ if }  \imath=1, 
	\end{cases}  
	 \mbox{ and } \quad \sigma(\imath) =  
	\begin{cases}
	1,   & \mbox{ if } \imath=0,  \\
	0.5, & \mbox{ if }  \imath=1, 
	\end{cases} 
	\end{align*}
	for any $\imath \in S$.
	The state space $S$ and generator $Q$ are same as in the above example.

The scheme with a step size of $h = 2^{18}$ is treated as the reference solution for the non-randomized modified and reduced schemes, as well as the Euler scheme with $120000$ sample paths.
\begin{figure}[h]
	\centering
	\begin{subfigure}[]{\textwidth}  
		\centering
		\includegraphics[width=0.85\textwidth]{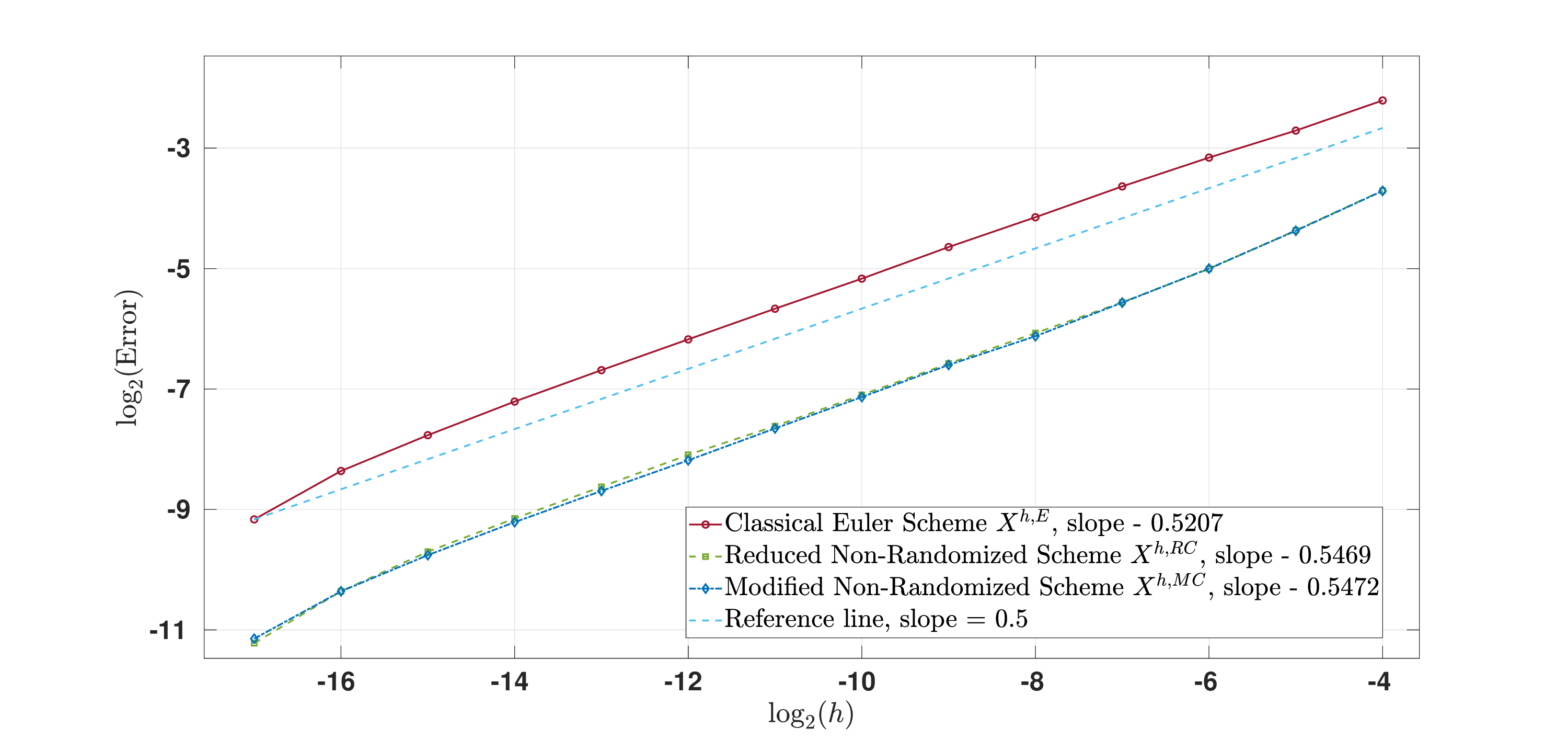}
		\caption{Error comparison}
		\label{fig:com:error:MRP}
	\end{subfigure}
\vspace{0.5cm}
	\begin{subfigure}[b]{\textwidth}  
		\centering
		\includegraphics[width=0.85\textwidth]{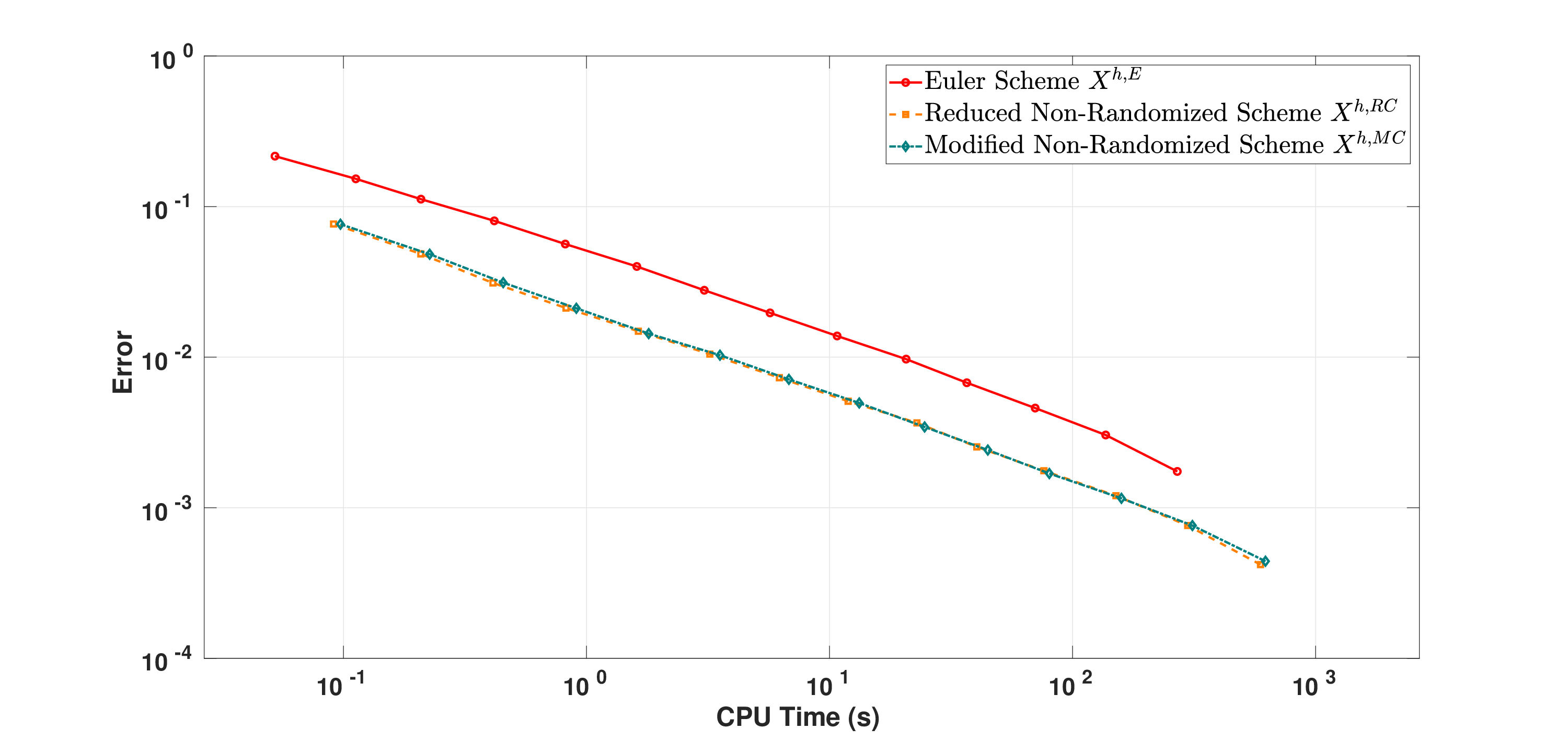}
		\caption{CPU Time comparison}
		\label{fig:com:cpu:MRP}
	\end{subfigure}
	
	\caption{Non-Randomized Scheme vs. Euler Scheme}
	\label{fig:comparison:MRP}
\end{figure}
\FloatBarrier

\begin{table}[h]
	\caption{Errors and CPU Time of Non-Randomized Scheme vs. Euler Scheme.}
	\centering
	\begingroup
	\footnotesize
	\begin{tabular}{@{}|c|ccccccc|@{}}
		\hline
		\noalign{\vspace{1.5pt}} 
		$h$ & $2^{-17}$ & $2^{-16}$ & $2^{-15}$ & $2^{-14}$ & $2^{-13}$ & $2^{-12}$  & $2^{-11}$
		\\ 
		\hline
		$\|X_{n_h} - X_{n_h}^{h,E}\|_{L^2(\Omega)}$ &  $0.00174194$ & $0.00304148$ & $0.00459130$ & $0.00676149$ & $0.00970590$ & $0.01383017$ & $0.01968373$
		\\ 
		CPU Time & $269.588664$ & $136.806453$ & $70.149249$ & $36.701544$ & $20.651390$ & $10.742751$ & $5.689832$
		\\
		\hline 
		$\|X_{n_h} - X_{n_h}^{h,RC}\|_{L^2(\Omega)}$ & $0.00041906$ & $0.00076148$ & $0.00120115$ & $0.00176037$ & $0.00253617$ & $0.00366260$ & $0.00509693$
		\\
		CPU Time & $592.890530$ & $297.713474$ & $150.758637$ & $76.182308$ & $40.325831$ & $22.881096$ & $11.925231$
		\\
		\hline 
		$\|X_{n_h} - X_{n_h}^{h,MC}\|_{L^2(\Omega)}$ & $0.00044127$ & $0.00076134$ & $0.00115479$ & $0.00168737$ & $0.00241640$ & $0.00343895$ & $0.00496271$ 
		\\
		CPU Time & $621.616301$ & $311.166626$ & $158.814760$ & $80.205689$ & $44.784252$ & $24.610021$ & $13.245413$
		\\
		\hline
	\end{tabular}
	
	\vspace{0.2cm} 
	
	\begin{tabular}{@{}|c|ccccccc|@{}}
		\hline
		\noalign{\vspace{1.5pt}} 
		$h$ & $2^{-10}$ & $2^{-9}$ & $2^{-8}$ & $2^{-7}$ & $2^{-6}$ & $2^{-5}$ & $2^{-4}$ 
		\\ 
		\hline
		$\|X_{n_h} - X_{n_h}^{h,E}\|_{L^2(\Omega)}$ & $0.02782465$ & $0.04003413$ & $0.05641012$  & $0.08044909$ & $0.11207857$ & $0.15294952$ & $0.21636128$ 
		\\ 
		CPU Time & $3.051900$ & $1.610595$ & $0.817714$ & $0.417378$ & $0.208293$ & $0.112406$ & $0.052256$
		\\
		\hline 
		$\|X_{n_h} - X_{n_h}^{h,RC}\|_{L^2(\Omega)}$ & $0.00730283$ & $0.01046996$ & $0.01486517$ & $0.02120103$ & $0.03112168$ & $0.04852725$ & $0.07664768$ 
		\\
		CPU Time & $6.215551$ & $3.216110$ & $1.634604$ & $0.823510$ & $0.411850$ & $0.207960$ & $0.091015$  
		\\
		\hline 
		$\|X_{n_h} - X_{n_h}^{h,MC}\|_{L^2(\Omega)}$ & $0.00712208$ & $0.01030526$ & $0.01433771$ & $0.02111886$ & $0.03125974$ & $0.04829711$ & $0.07641096$  
		\\ 
		CPU Time  & $6.801000$ & $3.539509$ & $1.801996$ & $0.907911$ & $0.454155$ & $0.226215$ & $0.097151$  
		\\
		\hline 
	\end{tabular}
	\label{tab:comphalfMRP}
	\endgroup
\end{table}    

\FloatBarrier 
\end{example} 
\begin{example}
	\label{ex:bs}
	Consider the geometric Brownian motion under regime-switching from \cite[Chapter 10]{mao2006} 
	\begin{align*}
	dX(t) = \mu(r(t))X(t)dt + \nu(r(t))X(t)dB(t)
	\end{align*}
		almost surely for any $ t \in [0,1]$ with initial values $S(0)=1$ and $r(0)=1$ where 
	\begin{align*}
	\mu(\imath) =  
	\begin{cases}
	0.5,   & \mbox{ if } \imath=0,  \\
	1, & \mbox{ if }  \imath=1, 
	\end{cases}  
	\mbox{ and } \quad \nu(\imath) =  
	\begin{cases}
	1.2,   & \mbox{ if } \imath=0,  \\
	0.6, & \mbox{ if }  \imath=1, 
	\end{cases} 
	\end{align*}
	for any $\imath \in S$.
	The state space $S$ and generator $Q$ are same as in the above example.

The scheme with a step size of $h = 2^{18}$ is treated as the reference solution for the non-randomized modified and reduced schemes, as well as the Euler scheme with $120000$ sample paths.

\begin{figure}[h]
	\centering
	\begin{subfigure}[]{\textwidth}  
		\centering
		\includegraphics[width=0.85\textwidth]{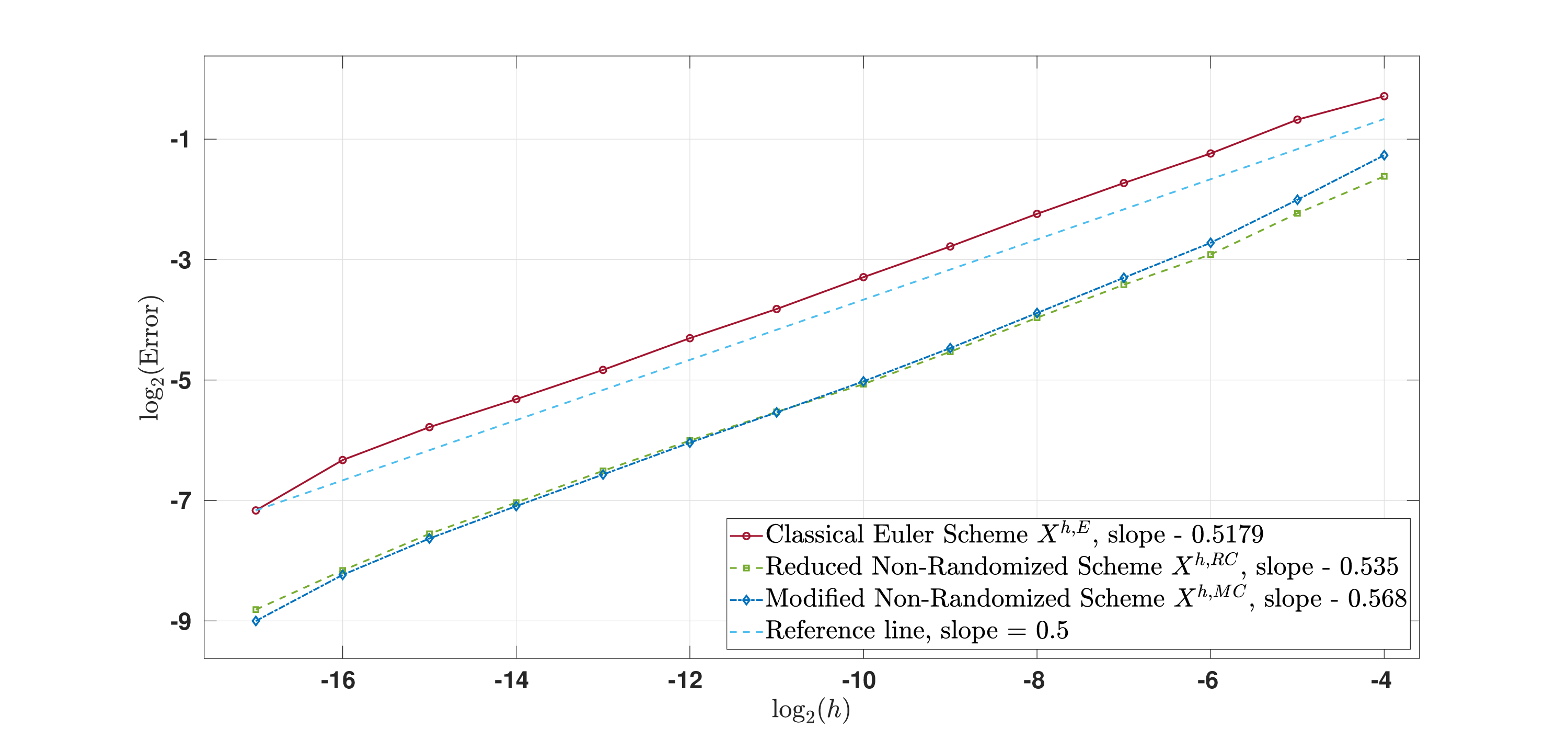}
		\caption{Error comparison}
		\label{fig:com:error:bs}
	\end{subfigure}
	\end{figure}
	\vspace{-3cm}
	\begin{figure}[h]
		\ContinuedFloat
	\begin{subfigure}[b]{\textwidth}  
		\centering
		\includegraphics[width=0.85\textwidth]{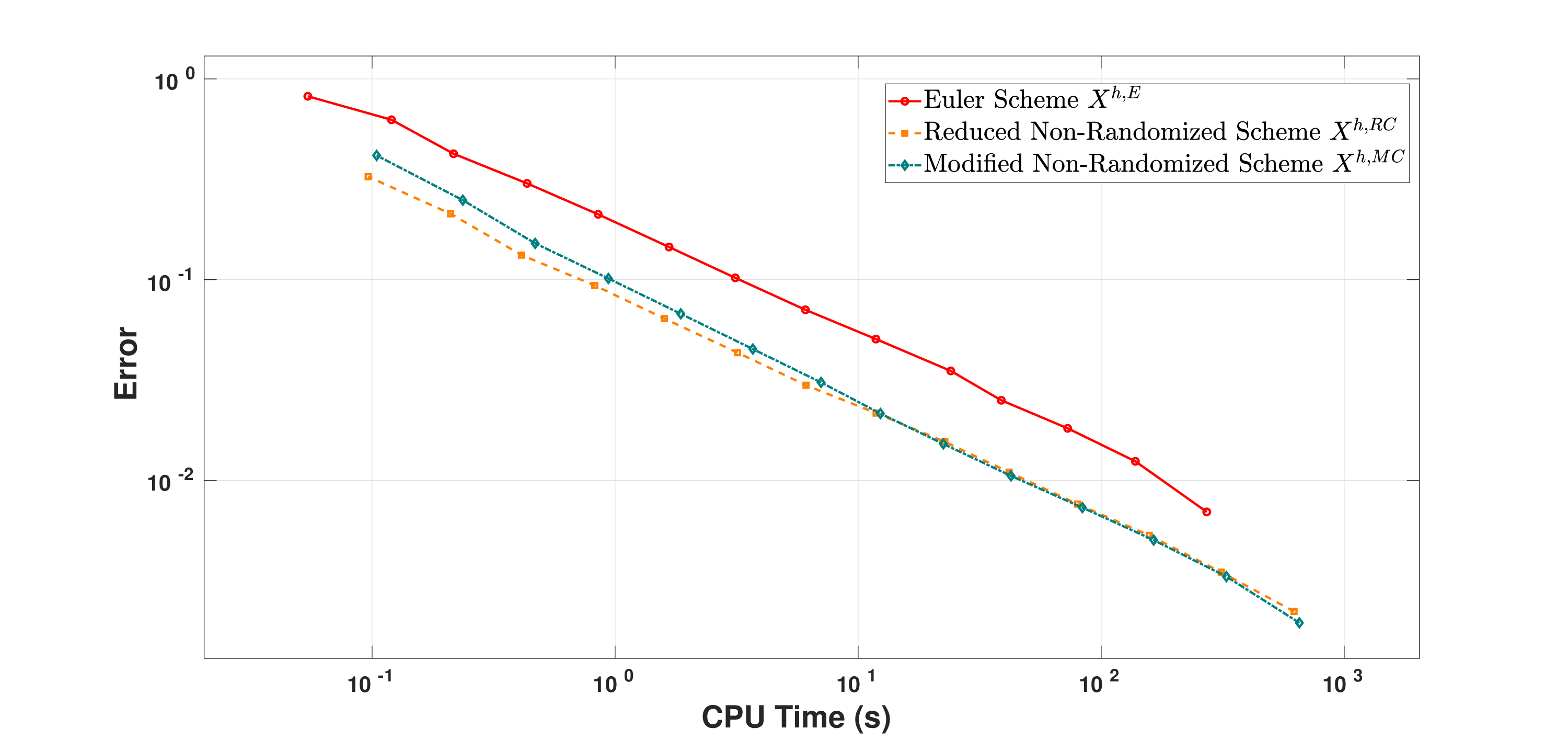}
		\caption{CPU Time comparison}
		\label{fig:com:cpu:bs}
	\end{subfigure}
	
	\caption{Non-Randomized Scheme vs. Euler Scheme}
	\label{fig:comparison:bs}
\end{figure}
\FloatBarrier
\begin{table}[h]
	\caption{Errors and CPU Time of Non-Randomized Scheme vs. Euler Scheme.}
	\centering
	\begingroup
	\footnotesize
	\begin{tabular}{@{}|c|ccccccc|@{}}
		\hline
		\noalign{\vspace{1.5pt}} 
		$h$ & $2^{-17}$ & $2^{-16}$ & $2^{-15}$ & $2^{-14}$ & $2^{-13}$ & $2^{-12}$  & $2^{-11}$
		\\ 
		\hline
		$\|X_{n_h} - X_{n_h}^{h,E}\|_{L^2(\Omega)}$ &  $0.00696620$ & $0.01244411$ & $0.01817137$ & $0.02506831$ & $0.03511297$ & $0.05058783$ & $0.07075726$ 
		\\ 
		CPU Time & $271.885095$ & $138.255123$ & $72.796728$ & $38.806524$ & $24.044686$ & $11.837339$ & $6.066299$ 
		\\
		\hline 
		$\|X_{n_h} - X_{n_h}^{h,RC}\|_{L^2(\Omega)}$ & $0.00222535$ & $0.00349048$ & $0.00532228$ & $0.00762568$ & $0.01098563$ & $0.01556446$ & $0.02164663$
		\\
		CPU Time & $621.345458$ & $311.865020$ & $157.700352$ & $79.881855$ & $41.785814$ & $22.727607$ & $11.845724$
		\\
		\hline 
		$\|X_{n_h} - X_{n_h}^{h,MC}\|_{L^2(\Omega)}$ & $0.00195249$ & $0.00331843$ & $0.00504374$ & $0.00732219$ & $0.01053223$ & $0.01519923$ & $0.02152456$  
		\\
		CPU Time & $653.652224$ & $327.488034$ & $164.415902$ & $83.680012$ & $42.555446$ & $22.411890$ & $12.350858$
		\\
		\hline
	\end{tabular}
	
	\vspace{0.2cm} 
	
	\begin{tabular}{@{}|c|ccccccc|@{}}
		\hline
		\noalign{\vspace{1.5pt}} 
		$h$ & $2^{-10}$ & $2^{-9}$ & $2^{-8}$ & $2^{-7}$ & $2^{-6}$ & $2^{-5}$ & $2^{-4}$ 
		\\ 
		\hline
		$\|X_{n_h} - X_{n_h}^{h,E}\|_{L^2(\Omega)}$ & $0.10210168$ & $0.14538977$ & $0.21155731$ & $0.30165312$ & $0.42392983$ & $0.62579877$ & $0.81950137$
		\\ 
		CPU Time & $3.122983$ & $1.667210$ & $0.851681$ & $0.434447$ & $0.216406$ & $0.120192$ & $0.054415$
		\\
		\hline 
		$\|X_{n_h} - X_{n_h}^{h,RC}\|_{L^2(\Omega)}$  & $0.02980105$ & $0.04327009$ & $0.06399915$ & $0.09358755$ & $0.13255435$ & $0.21287613$ & $0.32588021$ 
		\\
		CPU Time & $6.103634$ & $3.190577$ & $1.595359$ & $0.824581$ & $0.412425$ & $0.210510$ & $0.096383$  
		\\
		\hline 
		$\|X_{n_h} - X_{n_h}^{h,MC}\|_{L^2(\Omega)}$ & $0.03076803$ & $0.04511263$ & $0.06760361$ & $0.10139557$ & $0.15162888$ & $0.24887437$ & $0.41566770$ 
		\\ 
		CPU Time   & $7.034642$ & $3.687141$ & $1.862924$ & $0.938056$ & $0.468824$ & $0.236277$ & $0.104439$  
		\\
		\hline 
	\end{tabular}
	\label{tab:comphalfbs}
	\endgroup
\end{table} 
  
\FloatBarrier

\end{example}
Finally, as discussed in Remark \ref{rem:Half}, we demonstrate the derivative-free forms of non-randomized modified scheme $X^{DMC}$ and non-randomized reduced schemes $X^{DRC}$  alongside the Euler scheme $X^{E}$ in the following example. 
\begin{example}
	\label{ex:derivatefree}
	Consider the same model as discussed in Example \ref{ex:1} 
	and construct the derivative-free versions of the non-randomized modified and reduced schemes as discussed in Remark \ref{rem:Half}.
A step size of $h = 2^{18}$ is chosen as the true solution to compare the non-randomized derivative-free modified and reduced schemes with the Euler scheme, using $120000$ sample paths.
\begin{figure}[h]
	\centering
	\begin{subfigure}[]{\textwidth}  
		\centering
		\includegraphics[width=0.85\textwidth]{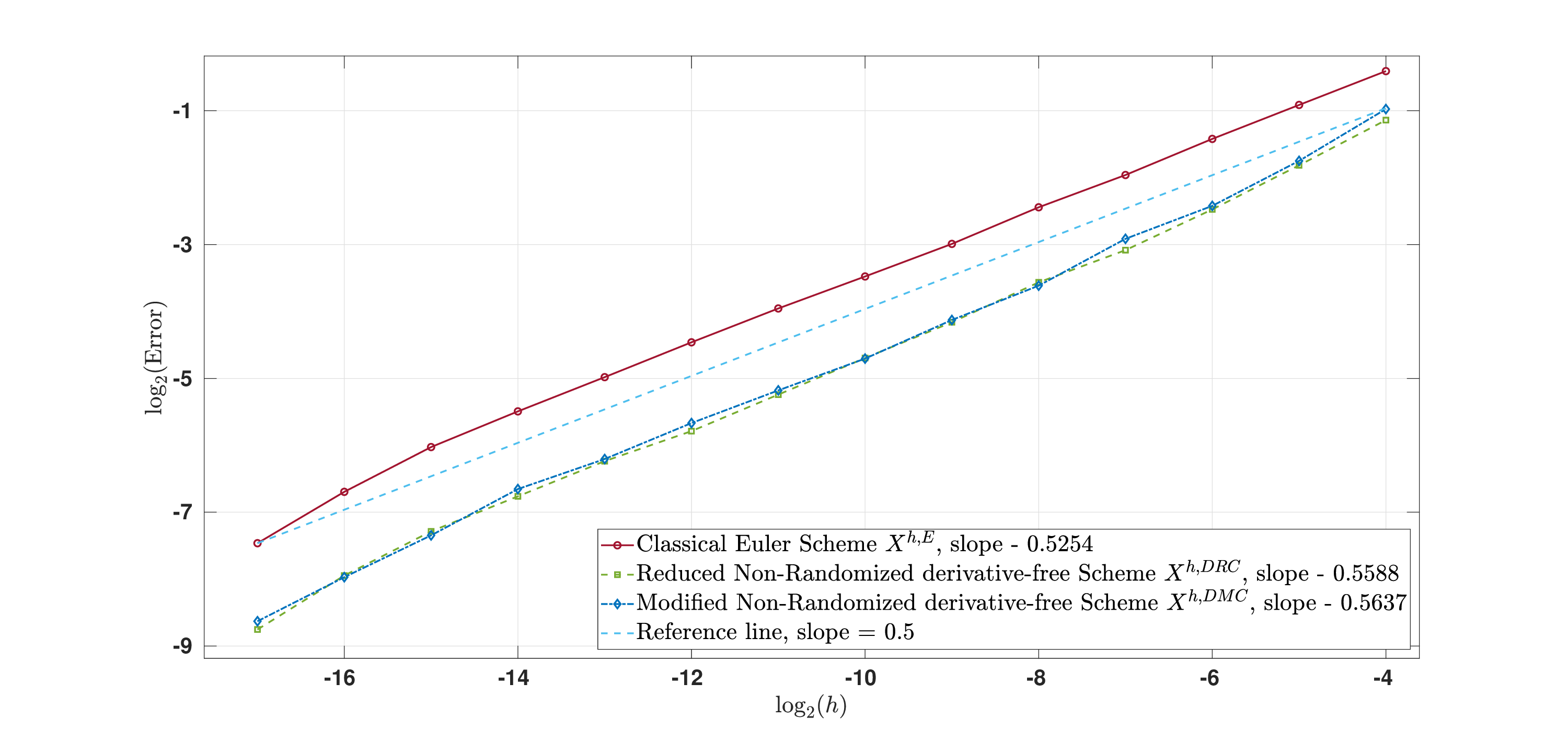}
		\caption{Error comparison}
		\label{fig:com:error:df}
	\end{subfigure}
\end{figure}
\begin{figure}[h]
	\ContinuedFloat
	\begin{subfigure}[b]{\textwidth}  
		\centering
		\includegraphics[width=0.85\textwidth]{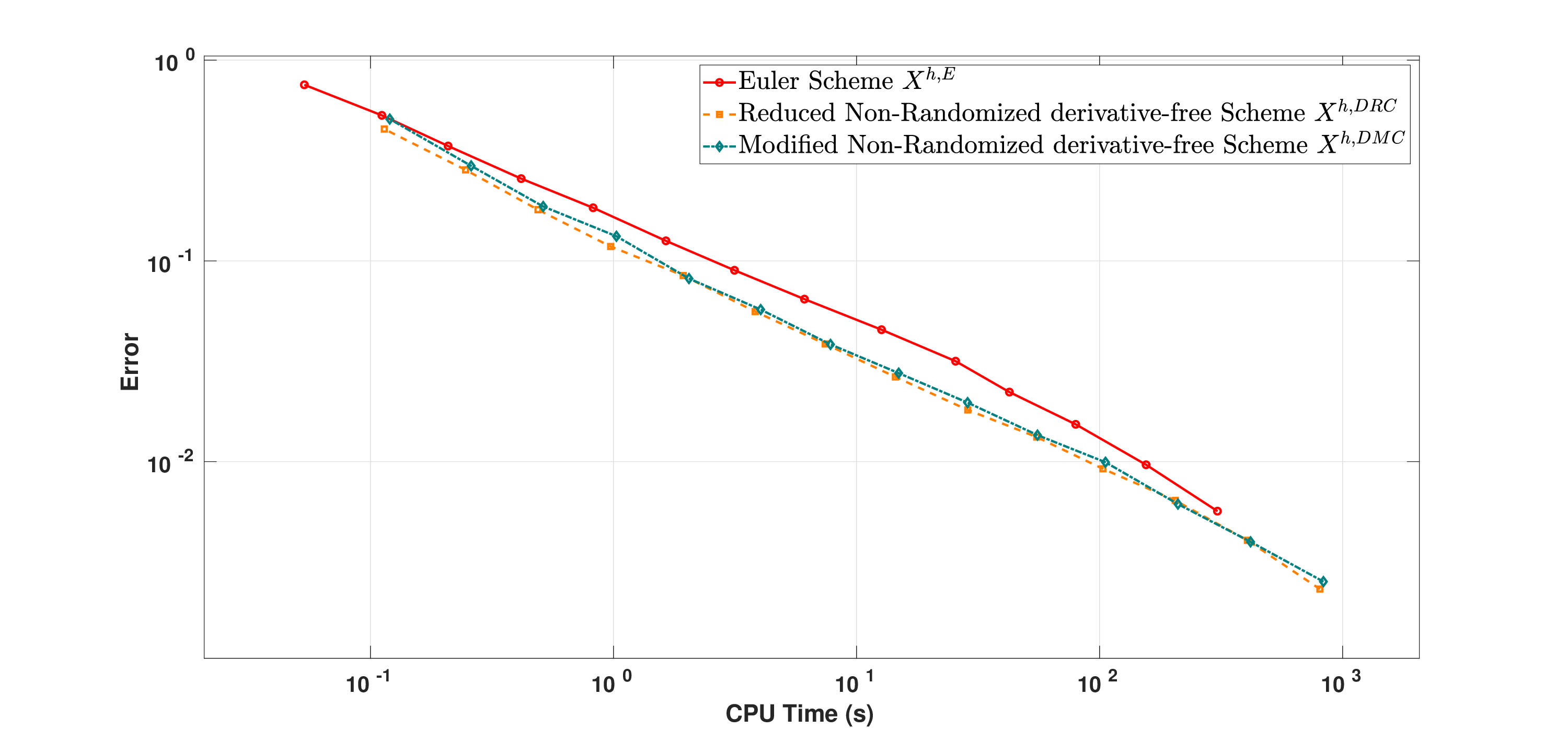}
		\caption{CPU Time comparison}
		\label{fig:com:cpu:df}
	\end{subfigure}
	
	\caption{Non-Randomized Scheme vs. Euler Scheme}
	\label{fig:comparison:df}
\end{figure}
\FloatBarrier
\begin{table}[h]
	\caption{Errors and CPU Time of Non-Randomized Scheme vs. Euler Scheme.}
	\centering
	\begingroup
	\footnotesize
	\begin{tabular}{@{}|c|ccccccc|@{}}
		\hline
		\noalign{\vspace{1.5pt}} 
		$h$ & $2^{-17}$ & $2^{-16}$ & $2^{-15}$ & $2^{-14}$ & $2^{-13}$ & $2^{-12}$  & $2^{-11}$
		\\ 
		\hline
		$\|X_{n_h} - X_{n_h}^{h,E}\|_{L^2(\Omega)}$ &  $0.00566716$ & $0.00964748$ & $0.01534008$ & $0.02220148$ & $0.03164935$ & $0.04540611$ & $0.06447661$ 
		\\ 
		CPU Time & $305.549772$	& $155.396309$	& $79.767932$	& $42.570644$ &	$25.566442$ &	$12.675962$ &	$6.106815$ 
		\\
		\hline 
		$\|X_{n_h} - X_{n_h}^{h,DRC}\|_{L^2(\Omega)}$ & $0.00231833$ & $0.00405278$ & $0.00640632$ & $0.00920944$ & $0.01325757$ & $0.01809466$ & $0.02641738$ 
		\\
		CPU Time & $806.553308$ &	$406.110562$ &	$204.820692$ &	$103.234321$ &	$55.153552$ &	$28.690261$ &	$14.448302$
		\\
		\hline 
		$\|X_{n_h} - X_{n_h}^{h,DMC}\|_{L^2(\Omega)}$ & $0.00252518$ & $0.00398567$ & $0.00614552$ & $0.00992798$ & $0.01355151$ & $0.01967248$ & $0.02759418$   
		\\
		CPU Time & $833.856323$ &	$418.168368$ &	$210.136879$ &	$105.777307$ &	$55.534373$ &	$28.644622$ &	$14.909527$ 
		\\
		\hline
	\end{tabular}
	
	\vspace{0.2cm} 
	
	\begin{tabular}{@{}|c|ccccccc|@{}}
		\hline
		\noalign{\vspace{1.5pt}} 
		$h$ & $2^{-10}$ & $2^{-9}$ & $2^{-8}$ & $2^{-7}$ & $2^{-6}$ & $2^{-5}$ & $2^{-4}$ 
		\\ 
		\hline
		$\|X_{n_h} - X_{n_h}^{h,E}\|_{L^2(\Omega)}$  & $0.08980111$ & $0.12584179$ & $0.18388249$ & $0.25691523$ & $0.37341561$ & $0.53082773$ & $0.75359352$
		\\ 
		CPU Time 	& $3.147398$ &	$1.643780$ &	$0.824708$ &	$0.417155$ &	$0.208715$ &	$0.111426$ &	$0.053475$
		\\
		\hline 
		$\|X_{n_h} - X_{n_h}^{h,DRC}\|_{L^2(\Omega)}$  & $0.02980105$ & $0.04327009$ & $0.06399915$ & $0.09358755$ & $0.13255435$ & $0.21287613$ & $0.32588021$ 
		\\
		CPU Time & $7.436777$ &	$3.831167$ &	$1.936506$ &	$0.973538$ &	$0.489165$ &	$0.246314$ &	$0.114055$  
		\\
		\hline 
		$\|X_{n_h} - X_{n_h}^{h,DMC}\|_{L^2(\Omega)}$ & $0.03835858$ & $0.05721629$ & $0.08165519$ & $0.13263672$ & $0.18662305$ & $0.29739362$ & $0.50869141$ 
		\\ 
		CPU Time   &	$7.813160$ &	$4.033923$ &	$2.045251$ &	$1.028350$ &	$0.513834$ &	$0.259656$ &	$0.120086$  
		\\
		\hline 
	\end{tabular}
	\label{tab:comphalfdf}
	\endgroup
\end{table}   
\FloatBarrier
\end{example}

\section*{Acknowledgement}
The second author gratefully acknowledges the financial support provided by National Board for Higher Mathematics (NBHM), Department of Atomic Energy (DAE), India under the project no. DAE-2188-MTD.


\section*{References}

\end{document}